\documentclass[10pt, reqno]{amsart}
\usepackage{graphicx, amssymb, amsmath, amsthm, color, slashed, cite}
\numberwithin{equation}{section}

\usepackage{hyperref}

\oddsidemargin .8cm
\evensidemargin .8cm
 \marginparsep 10pt
 \topmargin  0.5cm
 \headsep10pt
 \headheight 10pt
 \textheight 8.3in
 \textwidth 5.8in

\let\Re=\undefined\DeclareMathOperator*{\Re}{Re}
\let\Im=\undefined\DeclareMathOperator*{\Im}{Im}

\newcommand{\R}{\mathbb{R}}
\newcommand{\C}{\mathbb{C}}

\newcommand{\eps}{\varepsilon}

\newtheorem{theorem}{Theorem}[section]

\newtheorem{lemma}[theorem]{Lemma}

\newtheorem{corollary}[theorem]{Corollary}

\newtheorem{proposition}[theorem]{Proposition}

\theoremstyle{definition}
\newtheorem{definition}[theorem]{Definition}
\newtheorem{remark}[theorem]{Remark}

\theoremstyle{remark}

\newcommand{\qtq}[1]{\quad\text{#1}\quad}




\begin{document}

\title[Inhomogeneous NLS]{Dynamics of the non-radial energy-critical inhomogeneous NLS}

\author[C. M. Guzm\'an]{Carlos M. Guzm\'an}
\address{Department of Mathematics, UFF, Brazil}
\email{carlos.guz.j@gmail.com}
\author[C. Xu]{Chenbgin Xu}
\address{School of Mathematics and Statistics, Qinghai Normal University, Xining, Qinghai 810008, P.R.
China}
\email{xcbsph@163.com}

\begin{abstract} We consider the focusing inhomogeneous nonlinear Schr\"odinger equation
\[
i\partial_t u + \Delta u + |x|^{-b}|u|^\alpha u = 0\qtq{on}\R\times\R^N,
\]
with $\alpha=\tfrac{4-2b}{N-2}$, $N=\{3,4,5\}$  and
$0<b\leq \min\Big\{\tfrac{6-N}{2},\tfrac{4}{N}$\Big\}. This paper establishes global well-posedness and scattering for the non-radial energy-critical case in $\dot{H}^1(\R^N)$. It extends the previous research by Murphy and the first author \cite{GM}, which focused on the case $(N,\alpha,b)=(3,2,1)$. The novelty here, beyond considering higher dimensions, lies in our assumption of the condition $\sup_{t\in I}\|\nabla u(t)\|_{L^2}<\|\nabla Q\|_{L^2}$, which is weaker than the condition stated in \cite{Guzman}. Consequently, if a solution has energy and kinetic energy less than the ground state $Q$ at some point, then the solution is global and scatters. Moreover, we show scattering for the defocusing case. On the other hand, in this work, we also investigate the blow-up issue with nonradial data for $N\geq 3$ in $H^1(\mathbb{R}^N)$. This implies that our result holds without classical assumptions such as spherically symmetric data or $|x|u_0 \in L^2(\mathbb{R}^N)$.

\

\noindent Mathematics Subject Classification. 35A01, 35QA55, 35P25.\quad\quad\quad

\end{abstract}

\keywords{Key words. Inhomogeneous nonlinear Schr\"odinger equation; Global well-posedness; Scattering.}

\maketitle

\section{Introduction}

In this paper, we study the energy-critical inhomogeneous nonlinear Schr\"odinger equation (INLS) for non-radial initial data
\begin{equation}\label{INLS}
\begin{cases}
& i\partial_t u + \Delta u + |x|^{-b}|u|^\alpha u = 0, \\
& u|_{t=0}=u_0 \in H^1(\R^N),
\end{cases}
\end{equation}
where $\alpha=\frac{4-2b}{N-2}$ and $b>0$.

The equation \eqref{INLS} enjoys the scaling symmetry $u(t,x)\mapsto \lambda^{\frac{2-b}{\alpha}}u(\lambda^2 t,\lambda x)$, which identifies the unique invariant homogeneous $L^2$-based Sobolev space of initial data as $\dot H^{s_c}$, where
\[
s_c:=\tfrac{N}{2}-\tfrac{2-b}{\alpha}.
\]
When $s_c=1$, the critical space is $\dot H^1(\R^N)$, which is naturally associated to the conserved \emph{energy} of solutions, defined by\footnote{We denote $\dot H^1(\R^N)$ simply as $\dot H^1$.}
\[
E[u] = \int \tfrac12 |\nabla u|^2 - \tfrac1{\alpha+2}|x|^{-b}|u|^{\alpha+2}\,dx.
\]

Our first and main interest in this work is the scattering problem for \eqref{INLS} with general data. Here we say that a solution to \eqref{INLS} \emph{scatters} (in $\dot{H}^1$) if there exist $u_\pm\in \dot{H}^1$ such that
\[
\lim_{t\to\pm\infty}\|u(t) - e^{it\Delta}u_\pm\|_{\dot{H}^1}=0,
\]
where $e^{it\Delta}$ is the free Schr\"odinger propagator.

Note that when $b = 0$, the equation above reduces to the classical nonlinear Schrödinger equation (NLS), which appears in the description of nonlinear waves for various physical phenomena. It has drawn considerable attention in the mathematical community over the last three decades. The asymptotic behavior of the solution (scattering) for the energy-critical case was obtained by Kenig and Merle \cite{KM}, who pioneered a strategy now known as the `roteiro Kenig-Merle'. In their work, they showed global well-posedness and scattering below the ground state for the quintic NLS in three dimensions, assuming radial data. Killip and Visan \cite{KV} further extended this result for $N \geq 5$, considering the non-radial setting. While Dodson \cite{DodsonE} studied the $N = 4$ case, the three-dimensional scenario remains an open question.

On the other hand, the inhomogeneous NLS model has also been the subject of a great deal of recent mathematical interest. The well-posedness problem has been studied in works such as \cite{CFG20, Farah, Guzman, GENSTU}. Additionally, the dynamics of the solution have also been explored. In \cite{Campos, FG, FG2} proved scattering for \eqref{INLS} in the $L^2$-supercritical and $\dot{H}^1$ subcritical regime (inter-critical case), i.e., $\frac{4-2b}{N}<\alpha<\frac{4-2b}{N-2}$ assuming radial initial data. In \cite{MMZ} and \cite{CFGM} removed that condition, that is, they proved scattering for general data. Very recently,  Murphy and the first author \cite{GM} treated the energy-critical case. They studied the scattering theory for \eqref{INLS} in three space dimensions below the ground state. More precisely,
\begin{theorem}\label{PT} Let \eqref{INLS} with  $(N,\alpha,b)=(3,2,1)$. Suppose $u_0\in \dot{H}^1$ obeys
\begin{equation}\label{threshold}
E[u_0]<E[W]\qquad  \textnormal{and} \qquad \|\nabla u_0\|_{L^2}<\|\nabla W\|_{L^2},
\end{equation}
where $W$ denotes the ground-state solution to
\[
\Delta W + |x|^{-b}W^{\alpha+1}=0.
\]
Then the corresponding solution $u$ to \eqref{INLS} is global-in-time and scatters in $\dot{H}^1$.
\end{theorem}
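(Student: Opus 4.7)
The plan is to follow the Kenig--Merle concentration-compactness and rigidity scheme adapted to the inhomogeneous $\dot{H}^1$-critical setting. The first step is \emph{variational}: using the sharp Sobolev-type inequality whose extremizer is $W$, I would show that the sub-threshold condition \eqref{threshold} is preserved by the flow and quantitatively upgrades to
\[
\sup_{t\in I_{\max}}\|\nabla u(t)\|_{L^2}^2\leq (1-\delta)\|\nabla W\|_{L^2}^2
\qtq{and}
E[u(t)]\geq \tfrac{\delta}{2}\|\nabla u(t)\|_{L^2}^2
\]
for some $\delta=\delta(E[u_0],E[W])>0$. This keeps $u$ in a region where the relevant Pohozaev functional is strictly positive, providing both a uniform $\dot{H}^1$ bound and the coercivity needed for the rigidity step.

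The next step is concentration-compactness. Arguing by contradiction, small-data $\dot{H}^1$-scattering yields a critical threshold $0<E_c<E[W]$. Via a linear profile decomposition adapted to the INLS scaling $u\mapsto\lambda^{(2-b)/\alpha}u(\lambda^2\cdot,\lambda\cdot)$, allowing translation parameters $x_n$ inside the linear profiles, together with a nonlinear embedding and a stability theorem for \eqref{INLS}, I would construct a minimal non-scattering solution $u_c$ of energy $E_c$ whose orbit is precompact in $\dot{H}^1$ modulo the residual symmetries. The decisive feature of the inhomogeneous equation is that spatial translation is \emph{not} a symmetry of \eqref{INLS}: if a profile has $\lambda_n^{-1}|x_n|\to\infty$, the weight $|x+x_n|^{-b}$ vanishes on compact sets, so the limiting nonlinear profile equation degenerates to the linear Schr\"odinger equation, which scatters trivially, and such profiles contribute no non-scattering mass. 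The complementary case $\lambda_n^{-1}|x_n|=O(1)$ is absorbed into the scaling parameter to set $x_n\equiv 0$. Consequently $u_c$ is almost periodic modulo scaling alone.

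\emph{Rigidity} then follows from a truncated virial/Morawetz identity with cutoff adapted to the compactness modulus of $u_c$. The variational coercivity forces the second derivative of the truncated virial to be bounded below by a positive multiple of $\|\nabla u_c(t)\|_{L^2}^2$, while precompactness of the orbit controls the cutoff error terms; integration in time then produces linear growth of an a priori bounded quantity, a contradiction. I expect the main obstacle to lie in the profile decomposition step, specifically in showing that translation-escaping profiles are asymptotically free: one must rescale the equation, exploit uniform $\dot{H}^1$ bounds together with the pointwise decay of $|x+x_n|^{-b}$, and then invoke a stability theorem robust enough to pass from the approximate free evolution to an exact solution of \eqref{INLS}. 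The stability theory itself, with the singularity of $|x|^{-b}$ at the origin and the quintic critical nonlinearity in three dimensions, is delicate and likely requires Strichartz estimates in Lorentz spaces to handle the weight and critical exponent simultaneously.
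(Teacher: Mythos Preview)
Your proposal is correct and follows the same Kenig--Merle roadmap used in the paper (and in the original work \cite{GM} for this specific case). A few small points of comparison are worth noting. First, the paper organizes the contradiction around the critical \emph{kinetic} energy $K_c=\inf\{K:L(K)=\infty\}$ rather than the critical energy $E_c$ you propose; for Theorem~\ref{PT} the two are equivalent via the energy trapping you state in your first step, but the $K_c$ formulation is what allows the paper to relax \eqref{threshold} to the weaker hypothesis \eqref{kinetic energy} in Theorem~\ref{T}. Second, your suspicion that Lorentz--Strichartz spaces are needed turns out to be unnecessary here: the paper handles the $|x|^{-b}$ singularity entirely within ordinary Lebesgue Strichartz norms via Hardy's inequality (Lemma~\ref{LC}), the condition $b\leq 4/N$ keeping all exponents in range. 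Third, your rigidity sketch only treats the case $T_{\max}=\infty$; the paper separately rules out $T_{\max}<\infty$ via the reduced Duhamel formula and a frequency-localized argument forcing $\|u_c\|_{L^2}=0$ (Claim~1 in Section~\ref{S:exist}), a scenario your truncated virial argument by itself does not exclude.
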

This result represents an expansion of those obtained in \cite{ChoHongLee,ChoLee} from the radial to the non-radial context. It's worth noting that Theorem \ref{PT} establishes scattering in $3D$ considering non-radial data; however, for the homogeneous model, it is still open. In our paper, we further extend the result of \cite{GM} to higher dimensions, $N=\{3, 4, 5\}$, and widen the parameter $b$ range in dimension $N=3$. Additionally, we introduce a novelty in the condition, assuming a weaker condition \eqref{kinetic energy} (first introduced by Killip-Visan \cite{KV}) than the one stated in \cite{GM}. The result reads as follows.

\begin{theorem}\label{T} Let $N=\{3,4,5\}$, $0<b\leq\min\{\tfrac{6-N}{2},\tfrac{4}{N}\}$ and $\alpha=\tfrac{4-2b}{N-2}$. If $u$ is a solution of \eqref{INLS} with initial data $u_0$ and maximal lifespan $I$ such that
\begin{equation}\label{kinetic energy}
\sup_{t\in I}\|\nabla u(t)\|_{L^2}<\|\nabla W\|_{L^2},
\end{equation}
then u is a global solution and $\|u\|_{L_{t,x}^{ \tfrac{2(N+2)}{N-2}} }$ is bounded. In addition, the solution scatters in $\dot{H}^1$.
\end{theorem}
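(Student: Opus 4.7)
The plan is to follow the Kenig–Merle concentration-compactness and rigidity roadmap, adapted to the inhomogeneous energy-critical setting. First I would establish a variational characterization of the ground state $W$: using the sharp weighted Sobolev inequality that $W$ saturates, one deduces that \eqref{kinetic energy} self-improves to an energy-trapping estimate, namely that there exists $\delta>0$ such that
\[
\sup_{t\in I}\|\nabla u(t)\|_{L^2}^2 \leq (1-\delta)\|\nabla W\|_{L^2}^2,
\]
and simultaneously the Pohozaev-type quantity $\int |\nabla u|^2 - \int |x|^{-b}|u|^{\alpha+2}\,dx$ remains comparable to $\|\nabla u\|_{L^2}^2$ along the flow. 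Combined with the small-data local theory, the stability lemma, and the persistence-of-regularity arguments available from \cite{GM, Guzman}, this reduces the theorem to proving an a priori bound on $\|u\|_{L_{t,x}^{2(N+2)/(N-2)}(I\times\R^N)}$.

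Next I would introduce the critical spacetime threshold
\[
L(E) := \sup \Big\{ \|u\|_{L_{t,x}^{2(N+2)/(N-2)}} \,:\, u \text{ solves } \eqref{INLS},\ \sup_{t\in I}\|\nabla u(t)\|_{L^2}^2 \leq E \Big\},
\]
and argue that $L(E)<\infty$ for every $E<\|\nabla W\|_{L^2}^2$. If this fails there is a smallest threshold $E_c$ and a sequence $u_n$ of solutions whose spacetime norms diverge while their kinetic suprema approach $E_c$. Applying a linear $\dot H^1$ profile decomposition for $e^{it\Delta}$, I would decompose the data $u_n(0)$ into profiles $\phi^j$ with scales $\lambda_n^j$, spatial centers $x_n^j$ and (after the usual reduction) time shifts $t_n^j$, plus a remainder with vanishing linear spacetime norm.

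The nonlinear embedding step is the technical heart of the argument. Because $|x|^{-b}$ breaks translation invariance, profiles must be split according to the asymptotic behavior of $|x_n^j|/\lambda_n^j$. Profiles anchored near the origin evolve under \eqref{INLS} and scatter by the inductive hypothesis (their kinetic energies are strictly below $E_c$), while profiles for which $|x_n^j|/\lambda_n^j\to\infty$ see the rescaled potential $(\lambda_n^j)^{b}\,|\lambda_n^j y+x_n^j|^{-b}$ vanish in the limit, so the limiting dynamics is purely linear and scattering is automatic. This dichotomy is precisely what allows the argument to go through uniformly for $N\in\{3,4,5\}$: in $N=3$ the non-radial homogeneous energy-critical scattering result is still open, but no travelling-profile scenario survives in the INLS context, so it is never needed. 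Using Strichartz orthogonality together with the stability theory, the minimality of $E_c$ forces all but one profile to be trivial, producing a minimal blow-up solution $u_c$ whose orbit is precompact in $\dot H^1$ modulo the residual scaling symmetry alone.

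The final step is rigidity. Since translations are not a symmetry of \eqref{INLS}, the spatial center of $u_c$ is pinned uniformly near the origin, yielding a uniform-in-time spatial localization. A localized virial identity of the form
\[
\frac{d^2}{dt^2}\int \chi_R(x)|u_c(t,x)|^2\,dx \gtrsim \|\nabla u_c(t)\|_{L^2}^2 - o_R(1),
\]
where $\chi_R$ is a smooth cutoff to $|x|\leq R$, combined with the coercivity of the Pohozaev functional from the variational step and the almost periodicity of $u_c$, leads to a contradiction upon integrating in $t$ on a sufficiently long interval, forcing $u_c\equiv 0$. The main obstacle I expect is precisely the profile decomposition and nonlinear embedding step: the nonlinearity is non-algebraic for generic $b$, and the singular weight $|x|^{-b}$ demands fractional chain/Leibniz estimates in Strichartz and weighted Lorentz spaces. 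The restriction $0<b\leq\min\{(6-N)/2,\,4/N\}$ is exactly what permits the Hardy–Sobolev type inequalities that absorb the weight and close the estimates uniformly across $N\in\{3,4,5\}$, and it is also what makes the error produced by the diagonal scaling $(\lambda_n^j)^b|\lambda_n^j y+x_n^j|^{-b}$ negligible in the distant-profile analysis.
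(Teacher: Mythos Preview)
Your roadmap is essentially the paper's, including the key observation that profiles with $|x_n^j|/\lambda_n^j\to\infty$ are governed asymptotically by the free evolution (the paper's Proposition~\ref{P:embed}). However, there is a real gap in the step where you reduce to a single profile. You write that near-origin profiles ``scatter by the inductive hypothesis (their kinetic energies are strictly below $E_c$)'', but the threshold \eqref{kinetic energy} involves $\sup_t\|\nabla u(t)\|_{L^2}$, which is \emph{not} conserved. The linear decoupling gives $\sum_j\|\nabla\phi^j\|_{L^2}^2\le E_c$ only at the initial time; to invoke the induction you would need $\sup_t\|\nabla v_n^j(t)\|_{L^2}^2<E_c$ for the nonlinear profiles, and nothing prevents their kinetic energy from growing along the flow. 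Energy trapping cannot rescue this, since $E[u_n]<E[W]$ is neither assumed nor implied by \eqref{kinetic energy}. The paper circumvents this in the Killip--Visan style: first it shows by stability that at least one nonlinear profile has infinite scattering norm (Lemma~\ref{bp}), and then proves a kinetic energy decoupling lemma (Lemma~\ref{KED}) valid uniformly on the time intervals where all profiles remain controlled, which forces the bad profile alone to carry kinetic energy $\ge E_c$ and hence kills all other profiles and the remainder. You flag the weighted nonlinear estimates as the main obstacle; the paper instead singles out precisely this non-conservation issue as the principal difficulty beyond \cite{GM}.

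A smaller omission: your rigidity step uses only a localized virial identity integrated over a long time interval, which presupposes $T_{\max}=\infty$. The paper treats the finite-lifespan scenario separately, using the reduced Duhamel formula together with Bernstein to show that a precompact-modulo-scaling solution with $T_{\max}<\infty$ must have zero $L^2$ mass and therefore be identically zero.
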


This result extends two previous works. Specifically, when $b=0$, the result derived corresponds to that of Killip-Visan \cite{KV}. Furthermore, It also extends the findings obtained in \cite{GM} for $N\geq 4$. In the case where $N=3$, we extend the range from $b=1$ to $0<b<\frac{4}{3}$. The proof of Theorem \ref{T} follows the general outline in \cite{GM} (Kenig-Merle roadmap). As we will explain, we need two new ingredients to address the non-radial case with the kinetic energy condition \eqref{kinetic energy}, along with results on the Cauchy problem, such as global small data, and stability theory, among others. The first establishes the existence of scattering solutions to \eqref{INLS} associated with initial data located sufficiently far from the origin (see Proposition \ref{P:embed} below). The second and main ingredient is the Palais-Smale condition (Proposition \ref{PS}), which establishes blow-up solutions for a sequence of solutions of \eqref{INLS} with a compactness property. To this end, we need the technical results such as a bad profile and Kinetic energy decoupling. Both ingredients enable us to construct a minimal blow-up solution (or critical solution). In doing so, we will encounter several challenges with minimal kinetic energy, different from those developed in \cite{GM}. These difficulties are related to the fact that, unlike energy, kinetic energy is not conserved.


Next, we sketch the proof of Theorem \ref{T}. Let \( K > 0 \), we define
\[
L(K) = \sup_u \left\{ \|u(t)\|_{S(I)} : \sup_{t \in I} \|u(t)\|^2_{\dot{H}^1} \leq K \right\}.
\]
The small data theory of \eqref{INLS} (Proposition \ref{GWPCH1}-(ii)), implies there exists $\delta > 0$ such that: if $\|u_0\|_{\dot{H}^1}< \delta$, then u exists globally and $\|u\|_{S(\R)} \leq 2\delta$. Note that \( L(K) \) is a continuous function concerning \( K \) from the local theory and stability result, thus there must exist a unique critical kinetic energy $E_c$ such that 
$$ 
L(K) < \infty \;\; \textnormal{if}\;\;  K < K_c\;\;\textnormal{and}\;\;L(K) = \infty \;\; \textnormal{if}\;\;  K \geq K_c,
$$
where
\[
K_c = \inf \{ K : L(K) = \infty \}.
\]
In particular, if \( \sup_{t \in I} \|u(t)\|^2_{\dot{H}^1} < K_c \), then the maximal lifespan \( I = \R \) and \( \|u\|_{S(\R)} < \infty \). Hence, Theorem 1.2 is equivalent to
\[
K_c = \|W\|^2_{\dot{H}^1}.
\]

The scattering result is achieved by way of contradiction. If $K_c < \|W\|^2_{\dot{H}^1}$, then there exists a sequence of blow-up solutions $u_n$. Thus, we decompose their initial data $u_n(0)$ into linear profiles at the $\dot{H}^1$ level (using \eqref{P:LPD}), that is,
$$u_n(0)=\sum_{j=1}^{J}g_n^j[e^{it_n^j\Delta}\phi^j]+w_n^J.
$$
Using the first ingredient (Proposition \ref{P:embed}), one can build a corresponding nonlinear profile $v_n^j$ from each linear profile. By the stability result, the blow-up solutions guarantee the existence of a bad profile (see Lemma \ref{bp}), that is, a nonlinear profile that blow-up in scattering norm (see our second and main ingredient, Proposition \ref{PS}). Next, one can construct the critical solution $u_c$, see Proposition \ref{MS}. This solution exhibits a precompactness property in $\dot{H}^1$ up to scaling symmetry. However, Virial-type arguments and conservation laws preclude the existence of $u_c$ (as detailed in Claims 1 and 2 at the end of the text). This contradiction confirms the scattering result of \eqref{INLS} and establishes the theorem.

As a consequence, we have, if a solution has both energy and kinetic energy less than those of the ground state W at some point in time (for example, at $t=0$), then the solution is global and scatters in $\dot{H}^1$.   

\begin{corollary}\label{corollary} Let $N=\{3,4,5\}$, $0<b\leq\min\{\tfrac{6-N}{2},\tfrac{4}{N}\}$ and $\alpha=\tfrac{4-2b}{N-2}$. Suppose $u_0\in \dot{H}^1$ satisfies \eqref{threshold}, then the corresponding solution $u$ to \eqref{INLS} is globally defined for all time and scatters in $\dot{H}^1$.
\end{corollary}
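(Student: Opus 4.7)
The plan is to deduce Corollary \ref{corollary} directly from Theorem \ref{T} by upgrading the pointwise-at-$t=0$ assumption $\|\nabla u_0\|_{L^2}<\|\nabla W\|_{L^2}$ to the uniform-in-time kinetic energy bound \eqref{kinetic energy} on the entire maximal interval of existence. This is a standard variational trapping argument, in the spirit of Kenig--Merle \cite{KM}, combined with the variational characterization of the ground state $W$ for the weighted elliptic equation $\Delta W+|x|^{-b}W^{\alpha+1}=0$.

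First I would collect the variational identities satisfied by $W$. Testing the elliptic equation against $W$ gives $\int |x|^{-b}W^{\alpha+2}\,dx=\|\nabla W\|_{L^2}^2$, hence
\[
E[W]=\Bigl(\tfrac12-\tfrac{1}{\alpha+2}\Bigr)\|\nabla W\|_{L^2}^2=\tfrac{\alpha}{2(\alpha+2)}\|\nabla W\|_{L^2}^2.
\]
Moreover, $W$ saturates the sharp weighted Sobolev inequality
\[
\int |x|^{-b}|u|^{\alpha+2}\,dx\le \|\nabla W\|_{L^2}^{-\alpha}\|\nabla u\|_{L^2}^{\alpha+2},
\]
which is by now well documented in the INLS literature (see, e.g., \cite{Farah,Campos,CFGM}).

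Next, I would introduce the one-variable function
\[
f(y):=\tfrac12 y-\tfrac{1}{\alpha+2}\|\nabla W\|_{L^2}^{-\alpha}y^{(\alpha+2)/2}, \qquad y\ge 0.
\]
An elementary computation shows that $f$ is strictly increasing on $[0,\|\nabla W\|_{L^2}^2]$, attains its unique maximum at $y=\|\nabla W\|_{L^2}^2$ with value $f(\|\nabla W\|_{L^2}^2)=E[W]$, and strictly decreases thereafter. The sharp Sobolev inequality above, combined with conservation of energy, then yields for every $t$ in the maximal lifespan $I$
\[
f\bigl(\|\nabla u(t)\|_{L^2}^2\bigr)\le E[u(t)]=E[u_0]<E[W]=f\bigl(\|\nabla W\|_{L^2}^2\bigr).
\]

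Finally, I would run a standard continuity argument. Since $t\mapsto\|\nabla u(t)\|_{L^2}^2$ is continuous on $I$, starts strictly below $\|\nabla W\|_{L^2}^2$ at $t=0$, and can never attain the value $\|\nabla W\|_{L^2}^2$ (as that would force $E[u_0]\ge E[W]$, contradicting \eqref{threshold}), the kinetic energy must remain trapped strictly below $\|\nabla W\|_{L^2}^2$ throughout $I$. This is exactly the hypothesis \eqref{kinetic energy}, so Theorem \ref{T} applies and gives that $u$ is global and scatters in $\dot{H}^1$. I do not foresee any substantive obstacle here: the reduction is purely variational and does not interact with the profile-decomposition and Palais--Smale machinery behind Theorem \ref{T}. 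The only ingredient to verify is the sharp weighted Gagliardo--Nirenberg inequality with $W$ as extremizer, which is by now classical in the INLS setting.
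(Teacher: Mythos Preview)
Your proposal is correct and follows essentially the same approach as the paper: the paper deduces Corollary~\ref{corollary} from Theorem~\ref{T} via Lemma~\ref{energy-trapping} (see Remark~\ref{proof od corollary}), whose proof is exactly the variational trapping argument you outline, with the same one-variable function $F(y)=\tfrac{y}{2}-\tfrac{c^{-\alpha/2}}{\alpha+2}y^{(\alpha+2)/2}$ (where $c=\|\nabla W\|_{L^2}^2$) and the same use of the sharp weighted Sobolev inequality~\eqref{embedding-est} combined with energy conservation and continuity of the flow.
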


The idea behind the proof is that \eqref{threshold} implies \eqref{kinetic energy} (see Remark \ref{proof od corollary} for details). This corollary extends the Kenig-Merle result \cite{KM} from homogeneous to inhomogeneous settings, assuming radial symmetry, and generalizes the result obtained in \cite{GM} to higher dimensions.

We would like to note that the proof of Theorem \ref{T} adapts without difficulty
(with some simplifications) to the defocusing case. Here we remove the conditions given in \eqref{threshold} because the norm $\|\cdot\|_{\dot{H}^1}$ is always bounded and all the energies are positive. Thus, we have the following result.
\begin{corollary}\label{2} Let $N=\{3,4,5\}$ and $0<b\leq\min\{\tfrac{6-N}{2},\tfrac{4}{N}\}$. 
For any $u_0\in \dot{H}^1$, there exists a unique global
solution to
$$
i\partial_t u + \Delta u - |x|^{-b}|u|^\alpha u = 0,
$$
with initial data $u_0$. Moreover, this global solution scatters in $\dot{H}^1$.
\end{corollary}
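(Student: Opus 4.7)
The plan is to repeat the Kenig-Merle induction-on-kinetic-energy scheme used for Theorem~\ref{T}, with several simplifications available in the defocusing setting. First I would observe that the defocusing energy
\[
E[u]=\int \tfrac12|\nabla u|^{2}+\tfrac{1}{\alpha+2}|x|^{-b}|u|^{\alpha+2}\,dx
\]
is conserved and has two nonnegative summands, so for every $u_0\in\dot H^1$ and every solution $u$ on its maximal lifespan $I$ one has $\sup_{t\in I}\|\nabla u(t)\|_{L^2}^2\leq 2E[u_0]<\infty$. The sub-threshold hypothesis \eqref{kinetic energy} is therefore automatic, and no ground state intervenes, since no nontrivial $W$ solves $\Delta W=|x|^{-b}|W|^{\alpha}W$ in the defocusing setting. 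In particular, one needs no analogue of the condition \eqref{threshold}.

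I would then define $L(K)$ and $K_c$ exactly as in the outline following Theorem~\ref{T} and suppose for contradiction that $K_c<\infty$. The linear profile decomposition, the far-from-origin embedding (Proposition~\ref{P:embed}), the long-time stability theory, and the Palais-Smale compactness (Proposition~\ref{PS}) carry over verbatim, because none of them uses the sign of the nonlinearity, and the coercivity lemma used in the focusing case (to keep the kinetic energy below $\|\nabla W\|_{L^2}^2$) is replaced by the trivial energy bound displayed above. Extracting a bad profile and applying the minimal-solution construction (Proposition~\ref{MS}) then produces a critical element $u_c$ whose orbit is precompact in $\dot H^1$ up to the scaling symmetry.

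The rigidity step is where the defocusing sign helps most. I would test $u_c$ against a truncated virial weight $a(x)=\chi(|x|/R)|x|^2$ and compute the derivative of $M_R(t)=2\,\Im\int \bar u_c\,\nabla a\cdot\nabla u_c\,dx$; in the energy-critical case, the bulk terms collapse (by the resonance $\alpha(N-2)=4-2b$) to a positive multiple of $E[u_c]$, with the remaining contribution a nonnegative combination of $\|\nabla u_c(t)\|_{L^2}^2$ and $\int|x|^{-b}|u_c(t)|^{\alpha+2}\,dx$, while the truncation errors tend to zero uniformly in $t$ as $R\to\infty$ thanks to the precompactness of $u_c$. Integrating over a long time interval then contradicts the uniform bound $|M_R(t)|\lesssim R\|\nabla u_c\|_{L^2}^2$, forcing $u_c\equiv 0$ and hence $K_c=\infty$. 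The main obstacle I expect is controlling the truncation error from the $|x|^{-b}$ factor in the Morawetz computation within the parameter range $0<b\leq\min\{(6-N)/2,4/N\}$; this is essentially the same bookkeeping already performed in the focusing case, with the simplification that every surviving term in the virial identity now has the favorable sign.
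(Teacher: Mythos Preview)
Your approach is correct and coincides with the paper's, which simply remarks that the proof of Theorem~\ref{T} adapts with simplifications to the defocusing case because the conserved energy automatically controls the kinetic energy and no ground-state threshold intervenes. Two small points: your rigidity sketch only treats the case $T_{\max}=\infty$; you must still exclude the possibility that the critical element has finite lifespan, which in the paper is Claim~1 (handled via the reduced Duhamel formula and mass conservation) and transfers unchanged to the defocusing setting. Also, the bound on the truncated virial should be $|M_R(t)|\lesssim R^{2}\|\nabla u_c\|_{L^2}^{2}$ (using Hardy's inequality on the ball of radius $2R$) rather than $R$, though this does not affect your contradiction.
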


This corollary extends Park's result \cite{Park} from the radial to the non-radial setting. It's worth noting that this method differs from the one used in the previous work. It builds upon the ideas developed by Tao \cite{Tao} for the homogeneous equation.

\

Finally, we study the blow-up problem in the energy space $H^1$, also considering non-radial initial data. Before stating the result, it's crucial to address local well-posedness. Since we focus on blow-up in the inhomogeneous Sobolev space $H^1$, we establish the local result for all dimensions, $N\geq 3$. This is possible because the metric employed in this analysis is defined by $\|u-v\|_{W(I)}$, rather than $\|\nabla(u-v)\|_{W(I)}+\|u-v\|_{S(I)}$ which is used in $\dot{H}^1$. See Lemma \ref{LLWPH1}. We state the result 

\begin{theorem}\label{Blow-up}
Let $N\geq 3$, $0<b\leq \frac{4}{N}$ and $\alpha=\tfrac{4-2b}{N-2}$. Suppose $u_0\in H^1(\R^N)$ obeys
\begin{equation*}
E[u_0]<E[W]\qquad  \textnormal{and} \qquad \|\nabla u_0\|_{L^2}\geq\|\nabla W\|_{L^2}.
\end{equation*}
Let $u(t)$ denote the solution to \eqref{INLS} and $I_{\max}$ denote the maximal time interval of existence. Then the following statements hold:
\\

\textbf{(i)} If $b\geq \frac{4}{N}$, then $I_{\max}$ is finite;
\\

\textbf{(ii)} If $0<b<\frac{4}{N}$, then $I_{\max}$ is finite, or $I_{\max}=\R$, for any $T>\infty$, there exists $C>0$ such that
$$\sup_{t\in[0,T]}\|\nabla u(t)\|_{L_x^2}\geq CT^{\frac{2}{N\alpha-4}}.$$
\end{theorem}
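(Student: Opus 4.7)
The plan is to run a truncated virial argument in the spirit of Ogawa--Tsutsumi, combined with the trapping supplied by the variational characterization of the ground state $W$.

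First I would establish the trapping estimate. Energy conservation, the Pohozaev identity $E[W]=\tfrac{\alpha}{2(\alpha+2)}\|\nabla W\|_{L^2}^2$, and continuity in $t$ of $\|\nabla u(t)\|_{L^2}$ propagate the hypothesis $\|\nabla u_0\|_{L^2}\geq\|\nabla W\|_{L^2}$ quantitatively: there exist $\delta_0,\eta>0$ such that, for every $t\in I_{\max}$,
\[
\|\nabla u(t)\|_{L^2}^2\;\geq\;(1+\delta_0)\|\nabla W\|_{L^2}^2
\quad\text{and}\quad
\int|x|^{-b}|u(t)|^{\alpha+2}\,dx\;\geq\;(1+\eta)\|\nabla u(t)\|_{L^2}^2.
\]

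Next, at the energy-critical exponent $\alpha=(4-2b)/(N-2)$ the coefficient $(N\alpha+2b)/(\alpha+2)$ collapses to $2$, reducing the classical second-moment identity to
\[
\tfrac{d^2}{dt^2}\int|x|^2|u|^2\,dx\;=\;8\Big(\|\nabla u\|_{L^2}^2-\int|x|^{-b}|u|^{\alpha+2}\,dx\Big),
\]
which is $\leq-8\eta\|\nabla W\|_{L^2}^2<0$ by the trapping, so when $|x|u_0\in L^2$ the concavity of a positive functional already forces $I_{\max}$ finite. To treat general $u_0\in H^1$, I would introduce a standard cutoff $\phi_R(x):=R^2\phi(x/R)$ equal to $|x|^2$ on $\{|x|\leq R\}$, constant on $\{|x|\geq 2R\}$, with $\nabla^2\phi_R\leq 2I$ globally, and consider $V_R(t):=\int\phi_R|u|^2\,dx$. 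A direct computation yields
\[
V_R''(t)\;\leq\;-8\eta\,\|\nabla u(t)\|_{L^2}^2+\mathcal{E}(R,t),
\]
where the error, supported on $\{|x|>R\}$, satisfies (via mass conservation and the Gagliardo--Nirenberg interpolation $\|u\|_{L^{\alpha+2}}^{\alpha+2}\lesssim\|u_0\|_{L^2}^b\|\nabla u\|_{L^2}^{N\alpha/2}$)
\[
|\mathcal{E}(R,t)|\;\lesssim\;R^{-2}\|u_0\|_{L^2}^2\;+\;R^{-b}\|u_0\|_{L^2}^b\,\|\nabla u(t)\|_{L^2}^{N\alpha/2};
\]
the Hessian contribution is harmless because $\nabla^2\phi_R\leq 2I$ globally.

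In part~(i), the hypothesis $b\geq 4/N$ gives $N\alpha/2\leq 2$, so on the trapping set (where $\|\nabla u\|_{L^2}^2\geq\|\nabla W\|_{L^2}^2$) the tail is bounded by $CR^{-b}\|u_0\|_{L^2}^b\|\nabla u\|_{L^2}^2$. Choosing $R$ large enough that $CR^{-b}\|u_0\|_{L^2}^b\leq 4\eta$ and $CR^{-2}\|u_0\|_{L^2}^2\leq 2\eta\|\nabla W\|_{L^2}^2$, one obtains $V_R''(t)\leq-2\eta\|\nabla W\|_{L^2}^2<0$ uniformly on $I_{\max}$; combined with $V_R\geq 0$, this forces $I_{\max}$ to be finite. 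In part~(ii), $N\alpha/2>2$ and no pointwise absorption of the tail by the bulk is possible: assuming $I_{\max}=\R$, set $M(T):=\sup_{t\in[0,T]}\|\nabla u(t)\|_{L^2}$ and integrate $V_R''$ twice over $[0,T]$. Using $V_R(T)\geq 0$, $V_R(0)\leq R^2\|u_0\|_{L^2}^2$, and $|V_R'(0)|\leq R\|u_0\|_{L^2}\|\nabla u_0\|_{L^2}$, one arrives at an inequality of schematic form
\[
T^2\;\lesssim\;R^2+RT+R^{-2}T^2+R^{-b}M(T)^{N\alpha/2}\,T^2,
\]
and the claimed lower bound $M(T)\geq CT^{2/(N\alpha-4)}$ follows by optimizing the cutoff scale $R$ in terms of $T$ and $M(T)$. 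This final balance is the main technical difficulty: since $N\alpha/2>2$, the cutoff $R$ must be allowed to grow with the kinetic energy, and the consequent inflation of the boundary terms $V_R(0),V_R'(0)$ has to be carefully weighed against the gain from the bulk to extract the stated grow-up exponent.
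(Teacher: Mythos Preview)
Your proposal is correct and follows essentially the same route as the paper's proof. Both arguments rest on the truncated virial functional $V_R(t)=\int a_R|u|^2$ with a smooth cutoff $a_R(x)=R^2\varphi(x/R)$, the trapping lemma $\|\nabla u(t)\|_{L^2}\geq(1+\delta)\|\nabla W\|_{L^2}$ (the paper's Lemma~\ref{Blow}), the Gagliardo--Nirenberg bound $\|u\|_{L^{\alpha+2}}^{\alpha+2}\lesssim\|u_0\|_{L^2}^{b}\|\nabla u\|_{L^2}^{N\alpha/2}$ for the tail, and the dichotomy $N\alpha/2\le2$ versus $N\alpha/2>2$; in case~(ii) the paper likewise fixes $R$ as a power of $\sup_{[0,T]}\|\nabla u\|_{L^2}$, integrates twice, and reads off the growth exponent from $T\lesssim R$.
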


Note that the result holds for all dimensions $N\geq 3$. The proof is based on a contradiction argument using localized virial estimates. Here, we do not require assumptions such as spherically symmetric data or $|x|u_0\in L^2$. We take advantage of the decay of the nonlinear term outside a large ball.

\ 

The rest of this paper is organized as follows: In Section~\ref{S:notation} we first introduce some basic notation. We also discuss the well-posedness theory and stability theory for \eqref{INLS} and some variational analysis related to the ground state and the virial identity. In Section~\ref{S:exist}, we will prove that there exists a minimal energy blow-up solution to \eqref{INLS} under the assumption that Theorem \ref{T} fails and prove the main theorem. Here we introduce the new ingredients for our proof. Finally, in Section ~\ref{blowup} we show our second main result (Theorem \ref{Blow-up}).

\

\subsection*{Acknowledgements} C.M.G. was partially supported by Conselho Nacional de Desenvolvimento Científico e Tecnologico - CNPq and Fundação de Amparo à Pesquisa do Estado do Rio de Janeiro - FAPERJ (Brazil). C. Xu was partially supported by Qinghai Natural Science Foundation (No.2024-ZJ-976).

 \ 
 
\section{Notation and preliminaries}\label{S:notation}

Let's begin this section by introducing some notations that will be used throughout this paper. Subsequently, we establish the local well-posedness and investigate the stability result. Finally, we study the variational analysis associated with the equation.

We write $a\lesssim b$ to denote $a\leq cb$ for some $c>0$, denoting dependence on various parameters with subscripts when necessary. If $a\lesssim b\lesssim a$, we write $a\sim b$. 

We utilize the standard Lebesgue spaces $L^p$, the mixed Lebesgue spaces $L_t^q L_x^r$, as well as the homogeneous and inhomogeneous Sobolev spaces $\dot H^{s,r}$ and $H^{s,r}$.  When $r=2$, we write $\dot H^{s,2}=\dot H^s$ and $H^{s,2}=H^s$.  If necessary, we use subscripts to specify which variable we are concerned with. We use $'$ to denote the H\"older dual.

We also need the standard Littlewood--Paley projections $P_{\leq M}$, defined as Fourier multipliers, with the multiplier corresponding to a smooth cutoff to the
region $\{|\xi|\leq M\}$. In particular, we use the following Bernstein type inequalities
\begin{equation*}\label{BerIn}
\||\nabla|^sP_{\leq M} f\|_{L^p_x}\lesssim M^s\|f\|_{L^p_x}.
\end{equation*}
\begin{equation*}
\|P_{M} f\|_{L^p_x}\lesssim M^{\frac{N}{q}-\frac{N}{p}}\|f\|_{L^q_x}.
\end{equation*}

Finally, we use the following estimates: if
 $F(x,z)=|x|^{-b}|z|^\alpha z$,   then (see details in \cite[Remark 2.6]{Guzman} and \cite[Remark 2.5]{FG})
\begin{equation}\label{FEI}
 |F(x,z)-F(x,w)|\lesssim |x|^{-b}\left( |z|^\alpha+ |w|^\alpha \right)|z-w|
\end{equation}
and
\begin{equation}\label{SECONDEI}
\left|\nabla \left(F(x,z)-F(x,w)\right)\right|\lesssim  |x|^{-b-1}(|z|^{\alpha}+|w|^{\alpha})|z-w|+|x|^{-b}|z|^\alpha|\nabla (z- w)|+E,
\end{equation}
where
\begin{eqnarray*}
 E &\lesssim& \left\{\begin{array}{cl}
 |x|^{-b}\left(|z|^{\alpha-1}+|w|^{\alpha-1}\right)|\nabla w||z-w| & \textnormal{if}\;\;\;\alpha > 1 \vspace{0.2cm} \\
|x|^{-b}|\nabla w||z-w|^{\alpha} & \textnormal{if}\;\;\;0<\alpha\leq 1.
\end{array}\right.
\end{eqnarray*}

\ 

\subsection{Well-posedness theory}

To discuss the well-posedness theory for \eqref{INLS}, we first recall the Strichartz estimates (see e.g. \cite{Cazenave, Kato, Fochi}):
\begin{equation}\label{SE1}
\qquad \qquad \;\;\|e^{it\Delta}f\|_{L^q_IL^r_x}  \lesssim \|f\|_{L^2},
\end{equation}
\begin{equation}\label{SE2}
\biggl\| \int_0^t e^{i(t-t')\Delta} g(t')\,dt'\biggr\|_{L^q_IL^r_x} \lesssim \|g\|_{L^{q_1'}_IL^{r_1'}_x}.
\end{equation}
Here $(q,r)$ and $(q_1,r_1)$ are pairs $S$-admissible (or $L^2$-admissible), that is, $\tfrac{2}{q}+\tfrac{N}{r}=\tfrac{N}{2}$ and
\[
\begin{cases} 2\leq r\leq\tfrac{2N}{N-2}, & N\geq 3, \\
2 \leq r < \infty & N=2, \\
2\leq r<\infty & N=1.
\end{cases}
\]
They are one of the essential ingredients in the well-posedness theory for the Cauchy problem \eqref{INLS}. Before stating the results to \eqref{INLS} we first establish an estimate on the nonlinearity. To do that, we define the numbers 
\begin{equation*}
q_0=\frac{2(N+2)(b+1)}{bN+N-2}\qquad r_0=\frac{2N(N+2)(b+1)}{N^2+bN^2+4}\quad \textnormal{and}\quad \bar{r}=\frac{2(N+2)}{N-2}.
\end{equation*}
Denote $W(I)=L_I^{q_0}L_x^{r_0}$ and define the scattering size of a solution to \eqref{INLS} by
\begin{equation}\label{scatteringsize}
\|u\|_{S(I)}=\|u\|_{L_I^{\frac{2(N+2)}{N-2}}L^{\frac{2(N+2)}{N-2}}_x }.  \end{equation}

\begin{lemma}\label{LC}
Assume $N=\{3,4,5\}$, $\alpha=\frac{4-2b}{N-2}$ and $0<b\leq\frac{4}{N}$. Then the following statement holds.
$$\left\||x|^{-b}|f|^\alpha  g\right\|_{L^2_IL^{\frac{2N}{N+2}}_x}\;\lesssim \;\|\nabla f\|^b_{W(I)} \|f\|^{\alpha-b}_{S(I)}\| g\|_{W(I)}.$$
\begin{proof}
Observe that
\begin{equation*}
\frac{N+2}{2N}=\frac{\alpha-b}{\bar{r}}+\frac{b}{r_0}+\frac{1}{r_0}\qquad \textnormal{and}\qquad \frac{1}{2}=\frac{\alpha-b}{\bar{r}}+\frac{b}{q_0}+\frac{1}{q_0} \;,
\end{equation*}
where $\bar{r}=\frac{2(N+2)}{(N-2)}$. It follows from H\"older and Hardy inequalities that
\begin{eqnarray*}
\left\||x|^{-b}|f|^\alpha  g\right\|_{L^2_IL_x^{\frac{2N}{N+2}}}
&\leq & \||x|^{-1}f\|_{L^{q_0}_IL^{r_0}_x}^{b} \|f\|^{\alpha-b}_{L_I^{\bar{r}}L_x^{\bar{r}}}\| g\|_{L^{q_0}_IL^{r_0}_x}
\\
&\lesssim &\|\nabla f\|^{b}_{W(I)} \|f\|^{\alpha-b}_{S(I)}\| g\|_{W(I)}.
\end{eqnarray*}
\end{proof}
\end{lemma}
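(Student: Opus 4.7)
The plan is to reduce the estimate to a product of three factors via H\"older's inequality and then upgrade the singular weight to a derivative via the Hardy inequality. The first step is to uncover the right H\"older split by verifying the two exponent identities
\begin{equation*}
\frac{N+2}{2N} = \frac{\alpha - b}{\bar r} + \frac{b}{r_0} + \frac{1}{r_0}, \qquad \frac{1}{2} = \frac{\alpha - b}{\bar r} + \frac{b}{q_0} + \frac{1}{q_0}.
\end{equation*}
These are straight algebraic checks from $\alpha = (4-2b)/(N-2)$, $\bar r = 2(N+2)/(N-2)$, and the explicit formulas for $q_0, r_0$; they also force $\alpha - b \geq 0$, which fits the assumption $b \leq 4/N$ (since that is equivalent to $\alpha \geq b$). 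The identities tell me exactly how to distribute the integrability.

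Second, I would decompose the integrand as
\begin{equation*}
|x|^{-b}|f|^\alpha g = \bigl(|x|^{-1} f\bigr)^{b}\, |f|^{\alpha-b}\, g
\end{equation*}
and apply H\"older in the spatial variable with exponents $(r_0/b,\ \bar r/(\alpha-b),\ r_0)$, and then in time with exponents $(q_0/b,\ \bar r/(\alpha-b),\ q_0)$. By the identities above these are admissible, yielding
\begin{equation*}
\bigl\||x|^{-b}|f|^\alpha g\bigr\|_{L_I^2 L_x^{2N/(N+2)}} \lesssim \bigl\||x|^{-1} f\bigr\|_{L_I^{q_0} L_x^{r_0}}^{b}\,\|f\|_{L_I^{\bar r} L_x^{\bar r}}^{\alpha-b}\,\|g\|_{L_I^{q_0} L_x^{r_0}}.
\end{equation*}
The middle factor is already $\|f\|_{S(I)}^{\alpha-b}$ by definition of the scattering size, and the last factor is $\|g\|_{W(I)}$.

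Finally, I would apply the Hardy inequality $\||x|^{-1} f\|_{L^{r_0}_x} \lesssim \|\nabla f\|_{L^{r_0}_x}$ pointwise in $t$ to convert the weighted term into $\|\nabla f\|_{W(I)}^{b}$. This step requires $1 < r_0 < N$; the $L^2$-admissibility relation $\frac{2}{q_0}+\frac{N}{r_0} = \frac{N}{2}$ (which can be checked directly from the definitions) places $(q_0,r_0)$ among the Strichartz pairs, and a short computation using $N\in\{3,4,5\}$ and $0<b\leq 4/N$ confirms the required bound on $r_0$. Combining the pieces gives the claim. The only real obstacle is the careful bookkeeping in the first step: once the exponent identities are in hand, H\"older and Hardy close the estimate with essentially no room to spare.
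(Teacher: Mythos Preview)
Your proposal is correct and follows essentially the same route as the paper's proof: the same exponent identities, the same H\"older splitting into the three factors $\||x|^{-1}f\|_{L^{q_0}_IL^{r_0}_x}^{b}\|f\|_{L^{\bar r}_{t,x}}^{\alpha-b}\|g\|_{L^{q_0}_IL^{r_0}_x}$, and the same use of Hardy's inequality on the first factor. Your additional remark checking $1<r_0<N$ for the applicability of Hardy is a worthwhile detail that the paper leaves implicit.
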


Note that the condition $b\leq \frac{4}{N}$ implies $\alpha-b\geq 0$. Now, we state the well-posedness result.

\begin{proposition}\label{GWPCH1}
Assume $N=\{3,4,5\}$, $\alpha=\frac{4-2b}{N-2}$ and $0<b\leq \min\{\frac{4}{N},\frac{6-N}{2}\}$. 
\begin{itemize}
\item [(i)] For any $u_0 \in \dot{H}^1$, there
exists $T = T (u_0)$ and a unique solution $u$ with initial data $u_0$ satisfying
$$
u \in L^q_{loc}\left((-T, T \right), \dot{H}^{1,r}),\;\;\; \forall\; (q, r) \;\;S\textnormal{-admissible}.
$$
\item [(ii)] In particular, there exists $\delta>0$ such that if $$\|e^{it\Delta}u_0\|_{S\left([0,\infty)\right)}+\|e^{it\Delta}u_0\|_{W\left([0,\infty)\right)}<\delta,$$ then the solution $u$ to \eqref{INLS} is forward global and obeys
\begin{equation*}\label{NGWP3}
\|u\|_{S\left([0,\infty)\right)}\leq  2 \delta. 
\end{equation*}
The analogous statement holds backward in time or on all of $\R$.
\end{itemize}
\begin{proof}
To do that we use the contraction mapping argument. We only show (ii). Item (i) is similar. Define
$$
S_{\rho}=\{u\in C(I;\dot{H}^1(\mathbb{R}^N))\;:\; \|u\|_{S^1(I)}\leq \rho \},
$$
where
\begin{equation}\label{normS1}
\|u\|_{S^1(I)}:=\|u\|_{S(I)}+\|\nabla u\|_{W(I)}.
\end{equation}
We will next choose $\delta$, $\rho$ so that $G$ defined by
\begin{equation}\label{OPERATOR} 
G(u)(t)=e^{it\Delta}u_0+i\lambda \int_0^t e^{i(t-t')\Delta}|x|^{-b}|u(t')|^\alpha u(t')dt'
\end{equation}
is a contraction on $S_{\rho}$ equipped with the metric
$$
d(u,v):=\| u-v\|_{S^1(I)}.
$$

\ Hence, applying the Sobolev embedding and the Strichartz estimates \eqref{SE1}-\eqref{SE2}, it follows that
\begin{eqnarray*}
\|G(u)\|_{S(I)}&\leq&  \|e^{it\Delta}u_0\|_{S(I)}+\left\|\nabla \int_0^t e^{i(t-s)\Delta} F(x,u)ds\right\|_{L_I^{\frac{2(N+2)}{N-2}}L_x^{\frac{2N(N+2)}{N^2+4}}}\\
&\leq&  \|e^{it\Delta}u_0\|_{S(I)}+c\left\|\nabla F(x,u)\right\|_{L_I^{2}L_x^{\frac{2N}{N+2}}},
\end{eqnarray*}
where we have used the fact that $\left(\frac{2(N+2)}{N-2},\frac{2N(N+2)}{N^2+4}\right)$ is $S$-admissible. Moreover,
\begin{equation*}
\|\nabla G(u)\|_{W(I)}\leq \|\nabla e^{it\Delta}u_0\|_{W(I)}+ c\|\nabla F(x,u)\|_{L^2_IL_x^{\frac{2N}{N+2}}}.
\end{equation*}
Since
\begin{equation*}
|\nabla F (x,u)| \leq  |x|^{-b}|u|^{\alpha} |\nabla u| + |x|^{-b}|u|^{\alpha} ||x|^{-1} u|
\end{equation*}
and combining Lemma \ref{LC} together with the Hardy inequality, we obtain
\begin{eqnarray*}
\|\nabla F(x,u)\|_{L^2_IL_x^{\frac{2N}{N+2}}} &\leq & c \|u\|^{\alpha-b}_{S(I)}\|\nabla u\|^{b+1}_{W(I)}.
\end{eqnarray*}
Thus,
\begin{equation*}
\begin{split}
\|G(u)\|_{S(I)}
&\leq \|e^{it\Delta}u_0\|_{S(I)} +c \|u\|^{\alpha-b}_{S(I)}\|\nabla u\|^{b+1}_{W(I)}
\end{split}
\end{equation*}
and
\begin{equation}\label{TGHS11}
\begin{split}
\|\nabla G(u)\|_{W(I)}&\leq  \|\nabla e^{it\Delta}u_0\|_{W(I)}+c\|u\|^{\alpha-b}_{S(I)}\|\nabla u\|^{b+1}_{W(I)}.
\end{split}
\end{equation}
By summing the last two inequalities, one has
\begin{equation*}
\|G(u)\|_{S^1(I)}
\leq \|e^{it\Delta}u_0\|_{S^1(I)} +c \|u\|^{\alpha+1}_{S(I)}<\delta+c\rho^{\alpha+1}.
\end{equation*}
Choosing $\delta=\frac{\rho}{4}$ and $c\rho^{\alpha}<\frac{1}{2}$ we obtain $\| G(u)\|_{S^1(I)}\leq \rho$, which means to say   $G(u)\in S_{\rho}$.

\ To complete the proof we show that $G$ is a contraction on $S_{\rho}$. Repeating the above computations we get
\begin{equation}\label{Contrction1}
d(G(u),G(v))
\leq c \|\nabla \left(F(x,u)- F(x,v)\right)\|_{L^2_IL^{\frac{2N}{N+2}}_x}.
\end{equation}
The relation \eqref{SECONDEI} yields (using the fact $\alpha\geq1$)
\begin{eqnarray*}
\left|\nabla \left(F(x,u)-F(x,v)\right)\right|&\lesssim & |x|^{-b}(|u|^{\alpha}+|v|^{\alpha})||x|^{-1}(u-v)|+|x|^{-b}|u|^\alpha|\nabla (u- v)|+E\\
&\lesssim & |x|^{-b}(|u|^{\alpha}+|v|^{\alpha})|\nabla(u-v)|
+E,
\end{eqnarray*}
where $E \lesssim |x|^{-b}\left(|u|^{\alpha-1}+|v|^{\alpha-1}\right)|\nabla v||u-v|$.
Hence, \eqref{Contrction1} and Lemma \ref{LC} lead to
\begin{equation*}
d(G(u),G(v))\;\leq c\;\left( \|\nabla u\|^b_{W(I)} \|u\|^{\alpha-b}_{S(I)}+\|\nabla v\|^b_{W(I)} \|v\|^{\alpha-b}_{S(I)}\right) \|\nabla (u-v)\|_{W(I)}+E_1,
\end{equation*}
where
\begin{align*}
E_1 \lesssim
\begin{cases}
  \left(\|u\|^{\alpha-1}_{S(I)}+\|v\|^{\alpha-1}_{S(I)}\right) \|\nabla v\|_{W(I)}\|\nabla(u-v)\|_{W(I)}^b\|u-v\|_{S(I)}^{1-b},\ \text{if $b<1$};\\
  \left(\|\nabla u\|_{W(I)}^{b-1}\|u\|_{S(I)}^{\alpha-b}+\|\nabla v\|_{W(I)}^{b-1}\|v\|_{S(I)}^{\alpha-b}\right)\|\nabla v\|_{W(I)}\|\nabla (u-v)\|_{W(I)},\ \text{if $b\geq1$}.
\end{cases}
\end{align*}
Therefore, if $u, v \in S_{\rho}$ then
$$
d(G(u),G(v))\;\leq 4c \rho^{\alpha}d(u,v),
$$
which means that  $G$ is a contraction (we choose $4c\rho^\alpha<1$). So, by the contraction mapping principle, $G$ has a unique fixed point $u\in S_{\rho}$.
\end{proof}
\end{proposition}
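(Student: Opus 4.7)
The plan is a contraction mapping argument applied to the Duhamel operator
\[
G(u)(t) = e^{it\Delta}u_0 + i\int_0^t e^{i(t-s)\Delta}\bigl(|x|^{-b}|u(s)|^\alpha u(s)\bigr)\,ds
\]
on the closed ball $S_\rho = \{u : \|u\|_{S^1(I)} \leq \rho\}$ equipped with the metric $d(u,v) = \|u-v\|_{S^1(I)}$, where $\|u\|_{S^1(I)} := \|u\|_{S(I)} + \|\nabla u\|_{W(I)}$. For part (ii) I take $I = [0,\infty)$ under the small linear-flow assumption; part (i) reduces to the same estimate after choosing $T$ small enough that the Strichartz norms of $e^{it\Delta}u_0$ on $(-T,T)$ are small, which is valid by dominated convergence.

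For the self-mapping property, I would first verify that $\bigl(\tfrac{2(N+2)}{N-2},\tfrac{2N(N+2)}{N^2+4}\bigr)$ is an $S$-admissible pair with dual $\bigl(2,\tfrac{2N}{N+2}\bigr)$. The Strichartz estimates \eqref{SE1}--\eqref{SE2} then reduce the task to bounding $\|\nabla F(x,u)\|_{L^2_I L^{2N/(N+2)}_x}$. Since
\[
|\nabla F(x,u)| \lesssim |x|^{-b-1}|u|^{\alpha+1} + |x|^{-b}|u|^\alpha|\nabla u|,
\]
the second piece is directly controlled by Lemma \ref{LC}, while the first is handled by rewriting $|x|^{-b-1}=|x|^{-b}\cdot|x|^{-1}$ and applying Hardy's inequality $\||x|^{-1}u\|_{L^{r_0}_x}\lesssim \|\nabla u\|_{L^{r_0}_x}$ (valid since $1<r_0<N$) to trade the singular factor for a gradient, after which Lemma \ref{LC} applies again. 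The resulting bound $\lesssim \|u\|^{\alpha-b}_{S(I)}\|\nabla u\|^{b+1}_{W(I)}\lesssim \rho^{\alpha+1}$ together with $\|e^{it\Delta}u_0\|_{S^1(I)}<\delta=\rho/4$ closes the ball upon choosing $\rho$ small.

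For the contraction, the same Strichartz reduction leaves me to estimate $\|\nabla(F(x,u)-F(x,v))\|_{L^2_I L^{2N/(N+2)}_x}$ via the pointwise bound \eqref{SECONDEI}. The main terms $|x|^{-b}(|u|^\alpha+|v|^\alpha)|\nabla(u-v)|$ produce a prefactor $O(\rho^\alpha)$ in front of $d(u,v)$ by another application of Lemma \ref{LC} combined with Hardy as above. The residual term $E$ requires a technical case split at $b=1$: when $b\geq 1$ one pulls out an extra factor of $\|\nabla v\|_{W(I)}^{b-1}$ so that Lemma \ref{LC} swallows all derivatives; when $b<1$ one instead interpolates $|u-v|^\alpha$ via H\"older as $|\nabla(u-v)|^{b}|u-v|^{\alpha-b}$ in the appropriate Lebesgue spaces, producing $\|\nabla(u-v)\|_{W(I)}^b\|u-v\|_{S(I)}^{\alpha-b}$. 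In either case the prefactor is $O(\rho^\alpha)$, so $\rho$ small enough yields strict contraction and a unique fixed point $u\in S_\rho$.

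The main obstacle is precisely the derivative landing on the singular weight $|x|^{-b}$, forcing the use of Hardy at exponent $r_0$. This is where all the hypotheses enter: the bound $b\leq 4/N$ ensures $\alpha-b\geq 0$ so that Lemma \ref{LC} may be applied with non-negative exponents on $\|u\|_{S(I)}$, while the restriction $b\leq (6-N)/2$ together with $N\in\{3,4,5\}$ keeps $r_0\in(1,N)$ and ensures all intermediate admissibility relations for the Strichartz pairs remain consistent. A secondary subtlety is the asymmetry of the difference estimate that forces the $b<1$ versus $b\geq 1$ case split; any cleaner treatment would require a fractional chain rule estimate, which I avoid by accepting the case split.
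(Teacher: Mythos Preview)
Your proposal is essentially the paper's proof: same ball $S_\rho$, same metric, same Strichartz reduction to $\|\nabla F\|_{L^2_I L^{2N/(N+2)}_x}$, same use of Lemma~\ref{LC} with Hardy at exponent $r_0$, and the same $b<1$ versus $b\geq 1$ case split for the residual term $E$ in the contraction step.

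Two small inaccuracies are worth flagging. First, in the $b<1$ branch the factor $|u-v|$ in $E$ is linear, not $|u-v|^\alpha$; the correct split is $|u-v|=|u-v|^{b}\cdot|u-v|^{1-b}$, with Hardy applied to the first piece, giving $\|\nabla(u-v)\|_{W(I)}^b\|u-v\|_{S(I)}^{1-b}$ and the remaining $\alpha-1$ powers landing on $u$ or $v$ (exactly as in the paper). Second, your attribution of the hypothesis $b\leq(6-N)/2$ is off: it is equivalent to $\alpha\geq 1$, and this is where it is actually used---namely to invoke the $\alpha\geq 1$ branch of \eqref{SECONDEI} so that $E$ is linear (rather than H\"older-$\alpha$) in $u-v$, which is what makes the $S^1$-metric contraction work. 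The condition $r_0\in(1,N)$ needed for Hardy holds automatically in the stated range and is not the binding constraint.
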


Note that the conditions $b\leq \frac{6-N}{2}$ and $b\leq \frac{4}{N}$ in Proposition \eqref{GWPCH1} arise due to $\alpha\geq 1$ and $\alpha-b\geq 0$, respectively.

Similarly as before, using Strichartz estimates and Lemma \ref{LC} we also obtain

\begin{corollary}

\begin{itemize}
\item For any $\psi\in \dot{H}^1$, there exist $T > 0$ and a solution $u : (T ,\infty)\times \R^N \rightarrow \C$ to \eqref{INLS} obeying
$e^{-it\Delta}u(t) \rightarrow \psi$ in $\dot{H}^1$ as $t \rightarrow \infty$. The analogous statement holds backward. 
\item Scattering criterion: if the norms $\|u\|_{L^\infty_t\dot{H}^1_x}$ and $\|u\|_{S[0,\infty)}$ are bounded, then the solution $u$ scatters forward in time. 
\end{itemize} 
\end{corollary}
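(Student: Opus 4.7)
The plan is to deduce both parts of the corollary from the Strichartz estimates \eqref{SE1}--\eqref{SE2}, the nonlinear estimate of Lemma~\ref{LC}, and the contraction-mapping framework already assembled in the proof of Proposition~\ref{GWPCH1}; the two assertions are the standard ``existence of the wave operator'' and ``scattering criterion'' that accompany any local well-posedness theory of this type.

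For the existence of the forward wave operator attached to $\psi\in\dot{H}^1$, I would solve the integral equation
\[
u(t) \;=\; e^{it\Delta}\psi \;+\; i\int_t^{\infty} e^{i(t-s)\Delta}\bigl(|x|^{-b}|u|^\alpha u\bigr)(s)\,ds,\qquad t\in[T,\infty),
\]
by the contraction-mapping principle on the same $S^1$-ball $S_\rho$ used in the proof of Proposition~\ref{GWPCH1}(ii). The one observation needed beyond that proof is that, by density of Schwartz functions in $\dot{H}^1$ together with Strichartz, $\|e^{it\Delta}\psi\|_{S([T,\infty))}+\|\nabla e^{it\Delta}\psi\|_{W([T,\infty))}\to 0$ as $T\to\infty$, so for $T$ large the free evolution is arbitrarily small and the small-data estimates apply verbatim (the Duhamel integral running from $t$ to $\infty$ instead of from $0$ to $t$ changes none of the spacetime-norm bounds). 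The resulting fixed point $u$ satisfies
\[
\|u(t)-e^{it\Delta}\psi\|_{\dot{H}^1}\;\lesssim\;\|\nabla u\|_{W([t,\infty))}^{b+1}\|u\|_{S([t,\infty))}^{\alpha-b}\xrightarrow[t\to\infty]{}0
\]
via Lemma~\ref{LC} and Hardy, yielding the claimed scattering; the backward statement is identical.

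For the scattering criterion, set $M:=\|u\|_{L^\infty_t\dot{H}^1_x}+\|u\|_{S([0,\infty))}$ and partition $[0,\infty)=\bigcup_{j=1}^{J}I_j$ into finitely many subintervals on which $\|u\|_{S(I_j)}<\eta$, with $\eta>0$ to be chosen (possible because $\|u\|_{S([0,\infty))}<\infty$). Applying Strichartz to the Duhamel formula starting at the left endpoint $t_j$ of $I_j$, together with Lemma~\ref{LC}, gives
\[
\|\nabla u\|_{W(I_j)}\;\leq\;C\|\nabla u(t_j)\|_{L^2}+c\,\|\nabla u\|_{W(I_j)}^{b+1}\|u\|_{S(I_j)}^{\alpha-b}\;\leq\;CM+c\,\eta^{\alpha-b}\|\nabla u\|_{W(I_j)}^{b+1},
\]
and a standard continuity/bootstrap in the right endpoint of $I_j$ (with $\eta$ chosen small in terms of $M$) absorbs the nonlinear term, producing $\|\nabla u\|_{W(I_j)}\lesssim M$. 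Summing over the finitely many $I_j$ yields $\|\nabla u\|_{W([0,\infty))}<\infty$. Then, for $0\leq s<t$, dual Strichartz combined with Lemma~\ref{LC} gives
\[
\|e^{-it\Delta}u(t)-e^{-is\Delta}u(s)\|_{\dot{H}^1}\;\lesssim\;\|\nabla u\|_{W([s,t])}^{b+1}\|u\|_{S([s,t])}^{\alpha-b},
\]
whose right-hand side vanishes as $s\to\infty$ by dominated convergence; hence $\{e^{-it\Delta}u(t)\}$ is Cauchy in $\dot{H}^1$ and its limit $u_+$ is the forward scattering state.

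The one subtlety worth flagging is the bootstrap controlling $\|\nabla u\|_{W(I_j)}$: the nonlinear contribution is superlinear in $\|\nabla u\|_W$, so its absorption relies on $\eta$ being small with respect to the a priori bound $M$ on $\|u\|_{L^\infty_t\dot{H}^1_x}$ (which is precisely why both norms enter the hypothesis). In the boundary case $\alpha=b$ (i.e.\ $b=4/N$, which is allowed for $N=3,4$) the factor $\|u\|_{S(I_j)}^{\alpha-b}$ is trivial, and one performs an additional continuity-based subdivision to keep $\|\nabla u\|_W$ itself small on each piece; this is the only nonroutine step in the argument.
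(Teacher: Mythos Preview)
Your proposal is correct and matches the paper's approach: the paper gives no detailed proof of this corollary, stating only that it follows ``similarly as before, using Strichartz estimates and Lemma~\ref{LC},'' which is precisely the contraction-mapping construction (for the wave operator) and the subdivision-plus-bootstrap argument (for the scattering criterion) you spell out. The additional care you take with the continuity argument and the boundary case $\alpha=b$ goes beyond what the paper writes explicitly but is consistent with its framework.
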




We now address the
stability theory (or long-time perturbation) for the energy-critical inhomogeneous NLS.
\begin{proposition}[Stability]\label{stability} Let $N=\{3,4,5\}$, $0<b\leq \min\{\frac{4}N,\tfrac{6-N}{2}\}$, and $\alpha=\tfrac{4-2b}{N-2}$. Let $0\in I\subseteq \mathbb{R}$ and $\tilde u:I\times \mathbb{R}^N\to\C$ be a solution to
\begin{equation*}
i\partial_t \tilde{u} +\Delta \tilde{u} + |x|^{-b} |\tilde{u}|^\alpha \tilde{u}=e,
\end{equation*}
with initial data $\tilde{u}_0\in \dot{H}^1(\mathbb{R}^N)$ satisfying \begin{equation*}
\sup_{t\in I}  \|\tilde{u}\|_{\dot{H}^1_x}\leq M \;\;\; \textnormal{and}\;\;\; \|\tilde{u}\|_{S(I)}\leq L,\;\;\; \textnormal{for}\;\;M,L>0.
\end{equation*}
Let $u_0\in \dot{H}^1(\mathbb{R}^N)$ such that
\begin{equation*}
\|u_0-\tilde{u}_0\|_{\dot{H}^1}\leq M',\quad \|\nabla e\|_{L^2_IL_x^{\tfrac{2N}{N+2}}}\leq \varepsilon  \qtq{and} \|e^{it\Delta}(u_0-\tilde{u}_0)\|_{S^1(I)}\leq \varepsilon,
\end{equation*}
for\footnote{Recall the norm $\|\cdot\|_{S^1(I)}$ is defined in \eqref{normS1}.} some positive constant $M'$ and some $0<\varepsilon<\varepsilon_1=\varepsilon_1(M,M',L)$.

\indent Then, there exists a unique solution $u:I\times\R^N\to\C$ with $u|_{t=0}=u_0$ obeying	
\begin{align*}
\|u-\tilde{u}\|_{S^1(I)}\lesssim_{M,M',L}\varepsilon\quad \textnormal{and}\quad
\|u\|_{S(I)} \lesssim_{M,M',L} 1.
\end{align*}
\end{proposition}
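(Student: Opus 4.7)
I would follow the standard long-time perturbation approach: partition $I$ into finitely many short subintervals on which $\tilde u$ has small scattering norm, and bootstrap the estimate for the difference $w := u - \tilde u$ inductively across the pieces. The hypotheses only give $\sup_{t \in I}\|\tilde u\|_{\dot H^1_x} \leq M$ and $\|\tilde u\|_{S(I)} \leq L$, so the first step is to upgrade these to a Strichartz-type bound $\|\nabla \tilde u\|_{W(I)} \leq C_0(M,L)$. This is obtained by dividing $I$ into finitely many pieces on which $\|\tilde u\|_{S(\cdot)}$ is small enough (depending on $M$) that, when $\nabla$ is applied to the equation satisfied by $\tilde u$ and one uses Strichartz \eqref{SE1}--\eqref{SE2}, Lemma \ref{LC}, and Hardy, the nonlinear contribution can be absorbed on the left; the forcing contributes only $\varepsilon$ since $\|\nabla e\|_{L^2_I L^{2N/(N+2)}_x} \leq \varepsilon$.

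\textbf{Iteration.} The difference $w := u - \tilde u$ solves
\begin{equation*}
i\partial_t w + \Delta w + \bigl[F(x,\tilde u + w) - F(x,\tilde u)\bigr] = -e, \qquad w|_{t=0} = u_0 - \tilde u_0,
\end{equation*}
with $F(x,z) = |x|^{-b}|z|^{\alpha} z$. Using the previous step I would repartition $I$ into $J = J(M,M',L)$ consecutive subintervals $I_j = [t_j, t_{j+1}]$ on which $\|\tilde u\|_{S(I_j)} + \|\nabla \tilde u\|_{W(I_j)} \leq \eta$ for a small $\eta = \eta(M,M',L)$. On each $I_j$, Duhamel together with the dual Strichartz estimate \eqref{SE2} at the $\dot H^1$ level gives
\begin{equation*}
\|w\|_{S^1(I_j)} \leq C\|e^{i(t-t_j)\Delta} w(t_j)\|_{S^1(I_j)} + C\bigl\|\nabla[F(x,\tilde u + w) - F(x,\tilde u)]\bigr\|_{L^2_t L^{2N/(N+2)}_x(I_j)} + C\varepsilon.
\end{equation*}
Following the contraction computation inside the proof of Proposition \ref{GWPCH1} (using \eqref{SECONDEI}, Lemma \ref{LC}, and Hardy), the nonlinear difference is controlled by products with total power $\alpha + 1$ of $\eta$ and $\|w\|_{S^1(I_j)}$; for $\eta$ small, every contribution carrying at least one factor of $\|w\|_{S^1(I_j)}$ is absorbed on the left, and a fixed-point argument in a ball of $S^1(I_j)$ produces a unique $w$ on $I_j$ with
\begin{equation*}
\|w\|_{S^1(I_j)} \leq C_\ast\bigl(\|e^{i(t-t_j)\Delta}w(t_j)\|_{S^1(I_j)} + \varepsilon\bigr).
\end{equation*}
Rewriting $e^{i(t-t_{j+1})\Delta}w(t_{j+1})$ via Duhamel and Strichartz in terms of $e^{it\Delta}w(0)$ and the nonlinear/forcing contributions from $[0, t_{j+1}]$ yields a recursion that, iterated $J$ times, produces $\|w\|_{S^1(I)} \lesssim_{M,M',L} \varepsilon$ provided $\varepsilon_1$ is taken small enough that $C_\ast^{J}\varepsilon_1$ remains below the absorption threshold at every stage. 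The bound on $\|u\|_{S(I)}$ then follows from the triangle inequality and $\|\tilde u\|_{S(I)} \leq L$.

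\textbf{Main obstacle.} The delicate piece is the treatment of the cross term $E$ in \eqref{SECONDEI}, which for $\alpha \geq 1$ (as holds throughout the admissible range $N \in \{3,4,5\}$, $0 < b \leq \min\{(6-N)/2, 4/N\}$) couples $|\nabla \tilde u|$ with a single factor of $|w|$ and $\alpha - 1$ factors of $|\tilde u|$ or $|\tilde u + w|$. Unlike the pure power-type contributions, these terms are not manifestly higher-order in $w$, so I must distribute H\"older exponents carefully: $\nabla \tilde u$ is placed in $W(I_j)$ (where it is small by construction), while the $|\tilde u|^{\alpha-1}$ and $|w|$ factors are split between $S$-type and $W$-type norms using the exponent identity appearing in Lemma \ref{LC}. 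The condition $\alpha - b \geq 0$ ensured by $b \leq 4/N$ is precisely what allows this distribution; once it is in place, the absorption and iteration close cleanly and yield the claimed estimates.
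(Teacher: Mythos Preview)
Your proposal is correct and follows essentially the same route as the paper: a short-time perturbation estimate obtained by contraction in $S^1$ using \eqref{SECONDEI}, Lemma~\ref{LC}, and Hardy, followed by partitioning $I$ into finitely many pieces and iterating. The only cosmetic difference is that you first upgrade to a global bound $\|\nabla\tilde u\|_{W(I)}\leq C_0(M,L)$ and then repartition so that \emph{both} $\|\tilde u\|_{S(I_j)}$ and $\|\nabla\tilde u\|_{W(I_j)}$ are $\leq\eta$, whereas the paper partitions only according to the $S$-norm and carries $\|\nabla\tilde u\|_{W(I_j)}\lesssim M$ through the short-time step; both bookkeepings close the same way.
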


The stability result for \eqref{INLS} will be obtained iteratively from a short-time perturbation result.
\begin{lemma}\label{STP}{\bf (Short-time perturbation).}
Assume that assumptions in Proposition \ref{stability} hold.  Let $I\subseteq \mathbb{R}$ be a time interval containing zero and let $\widetilde{u}$ be a solution of
\begin{equation*}\label{PE}
i\partial_t \widetilde{u} +\Delta \widetilde{u} + |x|^{-b} |\widetilde{u}|^\alpha \widetilde{u} =e,
\end{equation*}
 defined on $I\times \mathbb{R}^N$, with initial data $\widetilde{u}_0\in \dot{H}^1(\mathbb{R}^N)$, and satisfying
\begin{equation}\label{PC11}  
\sup_{t\in I}  \|\tilde{u}\|_{\dot{H}^1_x}\leq M \;\; \textnormal{and}\;\; \|\tilde{u}\|_{S(I)}\leq \varepsilon,
\end{equation}
for some positive constant $M$ and some small $\varepsilon>0$. Let $u_0\in \dot{H}^1(\mathbb{R}^N)$ be such that
\begin{equation}\label{PC22}
\|u_0-\widetilde{u}_0\|_{\dot{H}^1}\leq M',\;\; \|\nabla e\|_{L^2_IL_x^{\tfrac{2N}{N+2}}}\leq \varepsilon  \;\;{and}\;\; \|e^{it\Delta}(u_0-\tilde{u}_0)\|_{S^1(I)}\leq \varepsilon,\;\;\textnormal{for }\; M'>0.
\end{equation}
\indent There exists $\varepsilon_0(M,M')>0$ such that if $\varepsilon<\varepsilon_0$, then there is a unique solution $u$ of \eqref{INLS} on $I\times \mathbb{R}^N$, with  $u(0)=u_0$,  satisfying
\begin{equation}\label{C} 
\|u-\widetilde{u}\|_{S^1(I)}\lesssim \varepsilon\qquad\textnormal{and}\qquad \|u\|_{S(I)}\lesssim c(M,M').
\end{equation}
\end{lemma}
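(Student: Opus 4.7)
The proof proceeds by a contraction mapping argument applied to the difference $w := u - \tilde u$, on a ball of radius of order $\varepsilon$ in $S^1(I)$.

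\emph{Step 1: a priori bound on $\|\nabla\tilde u\|_{W(I)}$.} Applying Strichartz to the Duhamel formula for $\tilde u$, together with Lemma \ref{LC} and the Hardy inequality exactly as in the proof of Proposition \ref{GWPCH1}, yields
\[
\|\nabla \tilde u\|_{W(I)} \le C\|\tilde u(0)\|_{\dot H^1} + C\|\tilde u\|_{S(I)}^{\alpha-b}\|\nabla \tilde u\|_{W(I)}^{b+1} + C\|\nabla e\|_{L^2_IL_x^{\frac{2N}{N+2}}} \le CM + C\varepsilon^{\alpha-b}\|\nabla \tilde u\|_{W(I)}^{b+1} + C\varepsilon.
\]
For $\varepsilon$ sufficiently small (depending on $M$), a standard continuity/bootstrap argument closes to give $\|\nabla \tilde u\|_{W(I)} \lesssim M$.

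\emph{Step 2: fixed-point setup.} Writing $u = \tilde u + w$ and subtracting the Duhamel formulas for $u$ and $\tilde u$ gives
\[
w(t) = e^{it\Delta}w_0 + i\int_0^t e^{i(t-s)\Delta}\bigl[F(x,\tilde u+w) - F(x,\tilde u)\bigr]\,ds + i\int_0^t e^{i(t-s)\Delta}e\,ds =: \Phi(w)(t),
\]
where $w_0 = u_0 - \tilde u_0$ and $F(x,z)=|x|^{-b}|z|^\alpha z$. We seek a fixed point of $\Phi$ in
\[
B_\rho := \bigl\{w \in C(I;\dot H^1) : \|w\|_{S^1(I)} \le \rho\bigr\}, \qquad d(w_1,w_2):=\|w_1-w_2\|_{S^1(I)},
\]
with $\rho = K\varepsilon$ for $K = K(M,M')$ to be chosen.

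\emph{Step 3: self-map and contraction estimates.} Applying \eqref{SE1}-\eqref{SE2} to $\Phi(w)$ and using the hypotheses $\|e^{it\Delta}w_0\|_{S^1(I)}\le \varepsilon$ and $\|\nabla e\|_{L^2_IL_x^{\frac{2N}{N+2}}}\le\varepsilon$, the crux is to bound
\[
\mathcal{A} := \bigl\|\nabla\bigl[F(x,\tilde u+w) - F(x,\tilde u)\bigr]\bigr\|_{L^2_IL_x^{\frac{2N}{N+2}}}.
\]
Using the pointwise estimate \eqref{SECONDEI} (in our parameter range $\alpha\ge1$, so only the first branch of $E$ is needed), splitting $|x|^{-b-1}|w| = |x|^{-b}|x|^{-1}|w|$ and invoking the Hardy inequality to trade $|x|^{-1}|w|$ for $|\nabla w|$, and then applying Lemma \ref{LC} together with the $E_1$-type bound from the proof of Proposition \ref{GWPCH1}, one arrives at
\[
\mathcal{A} \lesssim \bigl(\|\tilde u\|_{S(I)}^{\alpha-b} + \|w\|_{S(I)}^{\alpha-b}\bigr)\bigl(\|\nabla\tilde u\|_{W(I)}^b + \|\nabla w\|_{W(I)}^b\bigr)\|\nabla w\|_{W(I)}.
\]
All factors multiplying $\|\nabla w\|_{W(I)}$ are controlled by the hypotheses: $\|\tilde u\|_{S(I)}\le\varepsilon$, $\|w\|_{S(I)}\le \rho = K\varepsilon$, $\|\nabla \tilde u\|_{W(I)} \lesssim M$, and $\|\nabla w\|_{W(I)}\le \rho$. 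Consequently, for $\varepsilon\le \varepsilon_0(M,M')$, $\mathcal{A} \le \tfrac14 \rho$ and $\|\Phi(w)\|_{S^1(I)} \le \varepsilon + \tfrac14\rho + C\varepsilon \le \rho$ provided $K$ is chosen sufficiently large. An identical computation with \eqref{SECONDEI} applied to $F(x,\tilde u+w_1) - F(x,\tilde u+w_2)$ produces $d(\Phi(w_1),\Phi(w_2)) \le \tfrac12 d(w_1,w_2)$.

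\emph{Main obstacle.} The delicate point is the first term $|x|^{-b-1}(|u|^\alpha + |\tilde u|^\alpha)|w|$ in \eqref{SECONDEI}, whose radial weight is one power worse than what Lemma \ref{LC} directly accommodates; the Hardy inequality is essential here and implicitly relies on the constraints on $b$ imposed by the statement. A secondary complication is the $E$-term, but since the regime $b\le\min\{\tfrac{4}{N},\tfrac{6-N}{2}\}$ forces $\alpha\ge1$, only the simpler $\alpha>1$ branch of \eqref{SECONDEI} is needed, controlled exactly as in Proposition \ref{GWPCH1}. The Banach fixed-point theorem then delivers a unique $w \in B_\rho$ with $\|w\|_{S^1(I)} \lesssim \varepsilon$; setting $u = \tilde u + w$ yields \eqref{C}, and $\|u\|_{S(I)} \le \|\tilde u\|_{S(I)} + \|w\|_{S(I)} \le (1+K)\varepsilon \lesssim c(M,M')$ follows immediately.
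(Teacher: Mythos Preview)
Your proof is correct and follows the same route as the paper's: bootstrap $\|\nabla\tilde u\|_{W(I)}\lesssim M$ from the smallness of $\|\tilde u\|_{S(I)}$, then run a contraction for $w=u-\tilde u$ on a ball $B_\rho\subset S^1(I)$ with $\rho\sim\varepsilon$, estimating $\nabla H$ via \eqref{SECONDEI}, Hardy, and Lemma~\ref{LC}. One cosmetic point: your displayed bound for $\mathcal{A}$ does not literally absorb the $E$-term contribution you invoke (for $b<1$ that term carries a full power of $\|\nabla\tilde u\|_{W(I)}$ rather than the $b$-th power, and the paper treats it by a case split $b<1$ versus $b\ge 1$), but since you explicitly defer to the $E_1$ treatment in Proposition~\ref{GWPCH1} this is a write-up slip rather than a gap.
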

\begin{proof}
We may assume that $0=\inf I$. First note by using $\|\widetilde{u}\|_{S(I)}\leq \varepsilon$, for some $\varepsilon>0$ enough small, the Strichartz estimates \eqref{SE1}-\eqref{SE2}, Lemma \ref{LC} and a standard continuity argument we have $\|\nabla\widetilde{u}\|_{W(I)}\lesssim M.$

The solution $u$ will be obtained as $u=\widetilde{u}+w$, where $w$ is the solution of the following   IVP 	
\begin{equation}\label{IVPP} 
\begin{cases}
i\partial_tw +\Delta w + H(x,\widetilde{u},w)+e= 0,&  \\
w(0,x)= u_0(x)-\widetilde{u}_0(x),&
\end{cases}
\end{equation}
with $H(x,\widetilde{u},w)=|x|^{-b} \left(|\widetilde{u}+w|^\alpha (\widetilde{u}+w)-|\widetilde{u}|^\alpha \widetilde{u}\right)$. Indeed, let
\begin{equation}\label{IEP} 
G (w)(t):=e^{it\Delta}w_0+i  \int_0^t e^{i(t-s)\Delta}(H(x,\widetilde{u},w)+e)(s)ds
\end{equation}
and define $
B_{\rho}=\{ w\in C(I;\dot{H}^1(\mathbb{R}^N)):\;\|w\|_{S^1(I)}=\|w\|_{S(I)}+\|\nabla w\|_{W(I)}\leq \rho   \}.
$
We show that $G$ is a contraction on $B_{\rho}$.
Similarly as in Proposition \ref{GWPCH1} we deduce
\begin{equation}\label{ST1}
\begin{split}
\|G(w)\|_{S^1(I)}&\lesssim \varepsilon+ \| \nabla H\|_{L^2_IL_x^{\frac{2N}{N+2}}}+\|\nabla e\|_{L^2_IL_x^{\frac{2N}{N+2}}}.
\end{split}
\end{equation}
Let us now estimate $\|\nabla H(\cdot,\widetilde{u},w)\|_{L^2_IL_x^{\frac{2N}{N+2}}}$. From \eqref{SECONDEI} we get
\begin{equation*} 
|\nabla H(x,\widetilde{u},w)| \lesssim |x|^{-b}(|\widetilde{u}|^{\alpha}+|w|^{\alpha})|x|^{-1}|w|+|x|^{-b}(|\widetilde{u}|^\alpha+|w|^\alpha) |\nabla w| +E,
\end{equation*}
 where (since $\alpha\geq 1$)
 $$
 E \lesssim |x|^{-b}\left(|\widetilde{u}|^{\alpha-1}+|w|^{\alpha-1}\right)|w||\nabla \widetilde{u}|.
$$
Therefore, Lemma \ref{LC} and the Hardy inequality lead to
$$
\|\nabla H(\cdot,\widetilde{u},w)\|_{L^2_IL_x^{\frac{2N}{N-2}}} \lesssim \left(\|\nabla \widetilde{u} \|^b_{W(I)}\| \widetilde{u} \|^{\alpha-b}_{S(I)} + \|\nabla w \|^b_{W(I)}\| w\|^{\alpha-b}_{S(I)} \right)\|\nabla w \|_{W(I)}+E_1,
$$
where
\begin{align*}
E_1 \lesssim
\begin{cases}
  \left(\|\tilde u\|^{\alpha-1}_{S(I)}+\|w\|^{\alpha-1}_{S(I)}\right) \|\nabla \tilde u\|_{W(I)}\|\nabla w\|_{W(I)}^b\|w\|_{S(I)}^{1-b},\ \text{if $b<1$};\\
  \left(\|\nabla \tilde u\|_{W(I)}^{b-1}\|\tilde u\|_{S(I)}^{\alpha-b}+\|\nabla w\|_{W(I)}^{b-1}\|w\|_{S(I)}^{\alpha-b}\right)\|\nabla \tilde u\|_{W(I)}\|\nabla w\|_{W(I)},\ \text{if $b\geq1$}.
\end{cases}
\end{align*}
Gathering together the above estimates with our assumptions, we obtain for any $w\in B_{\rho}$,
\begin{align}\label{SP9}\nonumber
&\|\nabla H(\cdot,\widetilde{u},w)\|_{L^2_IL_x^{\frac{2N}{N+2}}}\\
 \lesssim&
\begin{cases}
\left(M^b\varepsilon^{\alpha-b}+\rho^{\alpha}\right)\rho
+(\varepsilon^{\alpha-1}+\rho^{\alpha-1})M\rho,\ \text{if $b<1$};\\
\left(M^b\varepsilon^{\alpha-b}+\rho^{\alpha}\right)\rho+(M^{b-1}\varepsilon^{\alpha-b}+\rho^{\alpha-1})M\rho,\ \text{if $b\geq1$}.
\end{cases}
\end{align}
The relations \eqref{ST1} and \eqref{SP9} yield (choosing $\rho=4C\varepsilon$ and $K=\rho$)
\begin{align*}
\|G(w)\|_{S^1(I)}\leq  \tfrac{\rho}{2}+
  \begin{cases}
\left(M^b\varepsilon^{\alpha-b}+K^b\rho^{\alpha-b}\right)K
+(\varepsilon^{\alpha-1}+\rho^{\alpha-1})MK^b\rho^{1-b},\ \text{if $b<1$};\\
\left(M^b\varepsilon^{\alpha-b}+K^b\rho^{\alpha-b}\right)K+M^{b}K\varepsilon^{\alpha-b}+K^{b}M\rho^{\alpha-b},\ \text{if $b\geq1$}.
\end{cases}
\end{align*}

If $\varepsilon$ is sufficiently small such that
$$
M^b\varepsilon^{\alpha-b}+\rho^{\alpha}<\tfrac{1}{4},\;\;\;M(\varepsilon^{\alpha-1}+\rho^{\alpha-1})<\tfrac{1}{4}\;\;
\textnormal{and}\;\;M\varepsilon^{\alpha-b}+M\rho^{\alpha-1}<\tfrac{1}{4},
$$
then
\begin{equation*}
\|G(w)\|_{S^1(I)}=\|G(w)\|_{S(I)}+\|\nabla G(w)\|_{W(I)}\leq \rho,
\end{equation*}
that is, $G$ is well defined on $B_{\rho}$. Using a similar argument we can also prove that $G$ is a contraction. Therefore, we conclude a unique solution $w$ on $I\times \mathbb{R}^N$ such that
$$
\|w\|_{S^1(I)}\lesssim \varepsilon,
$$
which it turn implies \eqref{C}. This completes the proof.
\end{proof}	

With Lemma \ref{STP} in hand we can show Proposition \ref{stability}.
\begin{proof}[\bf {Proof of Proposition \ref{stability}}] As before, we can assume $0=\inf I$.  Since $\|\widetilde{u}\|_{S(I)}\leq L$,  we may take a partition of $I$ into $n = n(L,\varepsilon)$ intervals $I_j = [t_j ,t_{j+1}]$ such that $\|\widetilde{u}\|_{S(I)}\leq \varepsilon$, where  $\varepsilon<\varepsilon_0(M,2M')$ and $\varepsilon_0$ is given in Lemma \ref{STP}.
Since, on $I_j$,
\begin{equation*}
w(t)=e^{i(t-t_j)\Delta}w(t_j)+i\int_{t_j}^{t}e^{i(t-s)\Delta}(H(x,\widetilde{u},w)+e)(s)ds,
  \end{equation*}
solves the equation in \eqref{IVPP} with initial data $w(t_j)=u(t_j)-\widetilde{u}(t_j)$, by choosing $\varepsilon_1=\varepsilon_1(n,M,M')$ sufficiently small we may reiterate Lemma \ref{STP} to obtain, for each $0\leq j<n$ and $\varepsilon<\varepsilon_1$,
\begin{equation}\label{LP1}
\|u-\widetilde{u}\|_{S^1(I_j)}\leq c(M,M',j)\varepsilon
\end{equation}
and
\begin{equation}\label{LP2}
\|u\|_{S(I_j)}\leq c(M,M',j),
\end{equation}
provided that (for each $0\leq j<n$)
\begin{equation}\label{LP3}
 \|e^{i(t-t_j)\Delta}(u(t_j)-\widetilde{u}(t_j))\|_{S^1(I_j)}\leq c(M,M',j)\varepsilon\leq \varepsilon_0
\end{equation}
and
\begin{equation}\label{LP4}
 \|u(t_j)-\widetilde{u}(t_j)\|_{\dot{H}^1_x}\leq 2M'.
 \end{equation}
By summing \eqref{LP1} and \eqref{LP2} over all subintervals $I_j$, we get the desired.

It remains to establish \eqref{LP3} and \eqref{LP4}. Indeed,
\[
\begin{split}
 \|e^{i(t-t_j)\Delta}w(t_j)\|_{S^1(I_j)}&\lesssim \|e^{it\Delta}w_0\|_{S^1(I_j)}+\| \nabla H(\cdot,\widetilde{u},w)\|_{L^2_{[0,t_j]}L_x^{\frac{2N}{N+2}}}+\|\nabla e\|_{L^2_{I}L_x^{\frac{2N}{N+2}}},
\end{split}
 \]
which by an inductive argument implies\footnote{By \eqref{SP9} and the choice of $\rho$ in Lemma \ref{STP} we get $\|H(\cdot, \widetilde{u},w)\|_{L^2_{[0,t_j]}L_x^{\frac{2N}{N+2}}}\leq c(M,M')(\varepsilon^{\alpha-b+1}+\varepsilon^{\alpha+1}+\varepsilon^{\alpha})$.}
$$
\|e^{i(t-t_j)\Delta}(u(t_j)-\widetilde{u}(t_j))\|_{S^1(I_j)}\lesssim \varepsilon+\sum_{k=0}^{j-1}c(M,M',k)(\varepsilon^{\alpha-b+1}+\varepsilon^{\alpha+1}+\varepsilon^{\alpha}).
$$

Similarly, we also obtain
\[
\begin{split}
\|u(t_j)-\widetilde{u}(t_j)\|_{\dot{H}^1_x}&\lesssim  
M'+\sum_{k=0}^{j-1}C(k,M,M')(\varepsilon^{\alpha-b+1}+\varepsilon^{\alpha+1}+\varepsilon^{\alpha}).
\end{split}
\]
Taking $\varepsilon_1$ sufficiently small, we see that \eqref{LP3} and \eqref{LP4} hold. This completes the proof of the proposition.
 \end{proof}

We end the section with a variational analysis of the equation.

\subsection{Variational analysis}
From Theorem 2.2 and Remark 2.1 of \cite{Yanagida}, we know that the ground state
\[
W(x) = \left(1+\tfrac{|x|^{2-b}}{(N-b)(N-2)}\right)^{-\tfrac{N-2}{2-b}}
\]
solves the elliptic equation
\begin{equation}\label{elliptic}
\Delta W + |x|^{-b} W^{\alpha+1} = 0
\end{equation}
and 
\begin{equation}\label{embedding-est}
\| |x|^{-b}|u|^{\alpha+2}\|_{L^1} \leq C_1 \|\nabla u\|_{L^2}^{\alpha+2}.
\end{equation}

Multiplying \eqref{elliptic} by $Q$ and integrating by parts yields
\[
\| \nabla W\|_{L^2}^2 = \||x|^{-b}W^{\alpha+2}\|_{L^1} =c,
\]
where the final equality follows from direct calculation.  This leads to the following useful identities:
\begin{equation}\label{ywmtbe}
C_1 = c^{-\frac{\alpha}2},\quad E(W) = \tfrac{\alpha c}{2(\alpha+2)}.
\end{equation}

The following lemma shows that solutions to \eqref{INLS} that initially obey the sub-threshold condition
\eqref{threshold} continue to do so throughout their lifespan.  Furthermore, the energy is coercive in this
regime.

\begin{lemma}[Energy trapping]\label{energy-trapping} Suppose that $u:I\times\R^N\to\C$ is a solution to
\eqref{INLS} obeying \eqref{threshold}.  Then there exists $\delta>0$ such that
\begin{equation}\label{quant-below}
\sup_{t\in I}\|u(t)\|_{\dot H^1}<(1-\delta)\|W\|_{\dot H^1}.
\end{equation}
Furthermore,
\begin{equation}\label{energy-sim}
 E(u(t))\sim \|u(t)\|_{\dot H^1}^2\sim \|u_0\|_{\dot H^1}^2\qtq{for all}t\in I.
\end{equation}
\end{lemma}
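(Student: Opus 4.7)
The plan is to reduce the statement to a one-dimensional calculus problem via the sharp embedding estimate \eqref{embedding-est}, a standard device in Kenig--Merle style analyses. Define
\[
f(y) := \tfrac12 y - \tfrac{C_1}{\alpha+2}\, y^{(\alpha+2)/2}, \qquad y \ge 0,
\]
so that \eqref{embedding-est} immediately yields $E(u(t)) \ge f(\|u(t)\|_{\dot H^1}^2)$ for every $t\in I$. A direct computation shows $f'$ vanishes only at the unique $y_0>0$ with $C_1 y_0^{\alpha/2}=1$, i.e.\ $y_0 = C_1^{-2/\alpha}$. Using the identities in \eqref{ywmtbe} (namely $C_1 = c^{-\alpha/2}$), one obtains $y_0 = c = \|W\|_{\dot H^1}^2$, and the maximum value is $f(y_0) = \tfrac{\alpha c}{2(\alpha+2)} = E(W)$. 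Thus $f$ strictly increases from $0$ to $E(W)$ on $[0,y_0]$ and strictly decreases beyond.

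Next I would run a continuity argument. Write $y(t):=\|u(t)\|_{\dot H^1}^2$, which is continuous in $t$ since $u\in C(I;\dot H^1)$. Conservation of energy and \eqref{threshold} give
\[
f(y(t)) \le E(u(t)) = E(u_0) < E(W) = f(y_0) \qquad \text{for all } t\in I,
\]
while $y(0)<y_0$. If some $t^\ast\in I$ had $y(t^\ast)=y_0$, then $f(y(t^\ast))=f(y_0)>E(u_0)$, contradicting the above. Hence $y(t)<y_0$ throughout $I$. To upgrade this to \eqref{quant-below}, choose $\eta>0$ with $E(u_0)\le(1-\eta)E(W)$; the set $\{y\in[0,y_0]:f(y)\le(1-\eta)E(W)\}$ is a closed subset of $[0,y_0)$ by the strict monotonicity of $f$ on $[0,y_0]$, so it is contained in some $[0,(1-\delta)^2 y_0]$ with $\delta=\delta(\eta)>0$, giving $\|u(t)\|_{\dot H^1}\le(1-\delta)\|W\|_{\dot H^1}$.

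For \eqref{energy-sim}, the upper bound $E(u(t)) \le \tfrac12\|u(t)\|_{\dot H^1}^2$ is immediate from the definition. For the matching lower bound I would combine \eqref{embedding-est} with \eqref{quant-below} and factor:
\[
E(u(t)) \;\ge\; \bigg[\tfrac12 - \tfrac{C_1}{\alpha+2}\|u(t)\|_{\dot H^1}^{\alpha}\bigg]\|u(t)\|_{\dot H^1}^2 \;\ge\; \bigg[\tfrac12 - \tfrac{(1-\delta)^{\alpha}}{\alpha+2}\bigg]\|u(t)\|_{\dot H^1}^2,
\]
where I used $C_1\|u(t)\|_{\dot H^1}^{\alpha} = C_1 y(t)^{\alpha/2} \le C_1(1-\delta)^\alpha y_0^{\alpha/2}=(1-\delta)^\alpha$ via $y_0=C_1^{-2/\alpha}$. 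Since $\alpha>0$, the bracketed constant is bounded below by $\tfrac{\alpha}{2(\alpha+2)}>0$, yielding $E(u(t))\sim\|u(t)\|_{\dot H^1}^2$; chaining this with energy conservation $E(u(t))=E(u_0)$ applied at both endpoints gives the full equivalence $E(u(t))\sim\|u(t)\|_{\dot H^1}^2\sim\|u_0\|_{\dot H^1}^2$. The only slightly delicate step is extracting the quantitative gap $\delta$ in \eqref{quant-below}, but this is a soft compactness consequence of the strict inequality $E(u_0)<E(W)$ and the single-hump structure of $f$, so I do not expect a genuine obstacle.
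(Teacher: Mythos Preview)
Your proposal is correct and follows essentially the same approach as the paper: both reduce to the one-variable function $F(y)=\tfrac12 y-\tfrac{C_1}{\alpha+2}y^{(\alpha+2)/2}$ (the paper writes $C_1=c^{-\alpha/2}$ explicitly), locate its unique maximum at $y_0=\|W\|_{\dot H^1}^2$ with value $E(W)$, and combine the sharp embedding \eqref{embedding-est}, energy conservation, and continuity of $t\mapsto\|u(t)\|_{\dot H^1}^2$ to trap $y(t)$ strictly below $y_0$; the coercivity \eqref{energy-sim} is then obtained in both by factoring $\|u(t)\|_{\dot H^1}^2$ out of the lower bound $E(u(t))\ge F(y(t))$ and using $C_1 y(t)^{\alpha/2}\le 1$. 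Your extraction of the quantitative gap $\delta$ via the sublevel set of $F$ is in fact spelled out a bit more carefully than in the paper, but the argument is the same.
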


\begin{proof} We first suppose $u(t)\in \dot H^1$ satisfies
\begin{equation}\label{f-is-below}
E(u_0) < (1-\delta_0)E(W) = (1-\delta_0)\tfrac{\alpha c}{2(\alpha+2)} \qtq{and} \|u_0\|_{\dot H^1}^2 \leq
\|W\|_{\dot H^1}^2 = c.
\end{equation}
for some $\delta_0>0$.   By the sharp inequality \eqref{embedding-est} and \eqref{ywmtbe}, we may also write
\begin{equation}\label{energy-lb}
E(u(t)) \geq \tfrac12 \|u(t)\|_{\dot H^1}^2 - \tfrac{c^{-\frac{\alpha}2}}{\alpha+2}\|u(t)\|_{\dot
H^1}^{\alpha+2}.
\end{equation}
Thus, with $y=\|u(t)\|_{\dot H^1}^2$ and
\[
F(y)=\frac{y}2-\tfrac{c^{-\frac{\alpha}2}}{\alpha+2}y^{\frac{\alpha+2}{2}}, \qtq{while} y\leq c,
\]
we have $F(0)<(1-\delta_0)\tfrac{\alpha c}{2(\alpha+2)}$. Using this, continuity of the flow in $\dot H^1$, and
conservation of energy, we deduce that $y<c-\delta_1$ for some $\delta_1$. Then  \eqref{quant-below} follows
directly.

For \eqref{energy-sim}, it suffices (by the conservation of energy) to show that if $f\in \dot H^1$ is as
above, then $E(f)\gtrsim \|f\|_{\dot H^1}^2$.  In fact, using $\|f\|_{\dot H^1}^2\leq c$ and \eqref{energy-lb},
we have
\[
E(f) \geq \|f\|_{\dot H^1}^2\{\tfrac12 - \tfrac{c^{-\frac{\alpha}2}}{\alpha+2}\cdot c^{\frac{\alpha}2}\}\geq
\tfrac{\alpha}{2(\alpha+2)}\|f\|_{\dot H^1}^2.
\]
\end{proof}

\begin{remark}\label{proof od corollary} Note that, to prove Corollary \ref{corollary}, we use Lemma \ref{energy-trapping}. Specifically, under the given hypothesis, we obtain \eqref{quant-below}, and by applying Theorem \ref{T}, we establish that the solution of \eqref{INLS} is global and scatters in $\dot{H}^1$.  
\end{remark}

Finally, the next lemma shows that the sub-threshold assumption in \eqref{threshold} yields the coercivity needed to run the virial argument. In particular, the quantity appearing in the lemma is precisely the functional that shows up in the time derivative of the virial quantity.

\begin{lemma}[Coercivity]\label{L:coercive} Suppose that $u:I\times\R^N\to\C$ is a solution to \eqref{INLS}. Suppose $\sup_{t\in I}\|\nabla u(t)\|_{L^2}\leq (1-\delta_0)\|\nabla Q\|_{L^2}$ for some $\delta_0>0$, then there exists $\delta>0$ such that
\[
\int |\nabla u(t,x)|^2 - |x|^{-b}|u(t,x)|^{\alpha+2}\,dx \geq \delta \int |\nabla u(t,x)|^2\,dx \qtq{uniformly over}t\in I.
\]
\end{lemma}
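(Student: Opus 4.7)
The plan is to deduce the coercivity directly from the sharp inequality \eqref{embedding-est} and the sub-threshold hypothesis, with no need to invoke the equation itself since the estimate is pointwise in time. Writing $c=\|\nabla W\|_{L^2}^2$, the identity $C_1=c^{-\alpha/2}$ from \eqref{ywmtbe} is the key quantitative input, together with the fact that $\alpha>0$.

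First, I would fix $t\in I$ and apply \eqref{embedding-est}, obtaining
\[
\int |x|^{-b}|u(t,x)|^{\alpha+2}\,dx \;\leq\; c^{-\alpha/2}\,\|\nabla u(t)\|_{L^2}^{\alpha+2}.
\]
Next I would factor out $\|\nabla u(t)\|_{L^2}^2$ and estimate the remaining factor $\|\nabla u(t)\|_{L^2}^{\alpha}$ using the hypothesis $\|\nabla u(t)\|_{L^2}^2\leq (1-\delta_0)^2 c$:
\[
c^{-\alpha/2}\|\nabla u(t)\|_{L^2}^{\alpha+2}
\;=\;c^{-\alpha/2}\|\nabla u(t)\|_{L^2}^2\cdot \|\nabla u(t)\|_{L^2}^{\alpha}
\;\leq\;(1-\delta_0)^{\alpha}\,\|\nabla u(t)\|_{L^2}^2.
\]
Subtracting this from $\int |\nabla u|^2\,dx$ yields
\[
\int |\nabla u(t,x)|^2-|x|^{-b}|u(t,x)|^{\alpha+2}\,dx
\;\geq\;\bigl[1-(1-\delta_0)^{\alpha}\bigr]\,\|\nabla u(t)\|_{L^2}^2,
\]
so choosing $\delta:=1-(1-\delta_0)^{\alpha}>0$ does the job. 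The bound is uniform in $t\in I$ since $\delta_0$ is.

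Since the derivation is purely algebraic once \eqref{embedding-est} and the identity $C_1=c^{-\alpha/2}$ are in hand, there is no substantial obstacle to overcome; the only delicate point is making sure $\delta_0>0$ translates to a strictly positive $\delta$, which is automatic because $\alpha>0$. The statement's apparent reliance on $u$ being a solution is in fact not used in the argument — only the sub-threshold bound on the kinetic energy matters — and this mirrors exactly the role played by the sharp Sobolev inequality in the analogous classical energy-critical NLS coercivity lemma of Kenig--Merle.
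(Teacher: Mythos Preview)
Your proof is correct and essentially identical to the paper's own argument: both apply the sharp inequality \eqref{embedding-est}, factor out $\|\nabla u(t)\|_{L^2}^2$, bound $\|\nabla u(t)\|_{L^2}^{\alpha}$ by $(1-\delta_0)^{\alpha}c^{\alpha/2}$, and then use $C_1=c^{-\alpha/2}$ from \eqref{ywmtbe} to obtain $\delta=1-(1-\delta_0)^{\alpha}$. Your observation that the equation itself is not needed is also implicit in the paper's short proof.
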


\begin{proof} The sharp inequality \eqref{embedding-est} and the hypothesis imply
$$
\|u(t)\|_{\dot H^1}^2 - \||x|^{-b} |f|^{\alpha+2}\|_{L^1}\geq \|u(t)\|_{\dot H^1}^2 - C_1\|u(t)\|^{\alpha+2}_{\dot{H}^1}\geq \left(1-(1-\delta_0)^\alpha C_1\|\nabla Q\|^{\frac{\alpha}{2}}_{L^2}\right)\|u(t)\|_{\dot H^1}^2.
$$
We obtain the desired result using the relation \eqref{ywmtbe}.


\end{proof}

\section{Existence of minimal non-scattering solution}\label{S:exist}

This section contains several results related to constructing a minimal non-scattering solution (or critical solution), as detailed in Proposition \ref{MS} below. In particular, it includes two essential new ingredients of our work: Proposition~\ref{P:embed} and Proposition \ref{PS}. To do that, we start with the following linear profile decomposition of \cite{Keraani} (see \cite[Theorem~4.1]{Visan}).	

\begin{proposition}[Linear profile decomposition]\label{P:LPD}
Let $u_n$ be a bounded sequence in $\dot H^1(\R^N)$. Then the following holds up to a subsequence:

There exist $J^*\in\mathbb{N}\cup\{\infty\}$; profiles $\phi^j\in \dot H^1\backslash\{0\}$; scales $\lambda_n^j\in(0,\infty)$; space translation parameters $x_n^j\in\R^N$; time translation parameters $t_n^j$; and remainders $w_n^J$ so that writing
\[
g_n^j f(x) = (\lambda_n^j)^{-\frac{N-2}2} f(\tfrac{x-x_n^j}{\lambda_n^j}),
\]
we have the following decomposition for $1\leq J\leq J^*$:
\[
u_n = \sum_{j=1}^J g_n^j[e^{it_n^j\Delta}\phi^j] + w_n^J.
\]
This decomposition satisfies the following conditions:
\begin{itemize}
\item Energy decoupling: writing $P(u)=\| |x|^{-b} |u|^{\alpha+2}\|_{L^1}$, we have
\begin{equation}\label{energy-decoupling}
\begin{aligned}
&\lim_{n\to\infty} \bigl\{ \|\nabla u_n\|_{L^2}^2 - \sum_{j=1}^J \|\nabla \phi^j \|_{L^2}^2 - \|\nabla w_n^J\|_{L^2}^2\bigr\} = 0, \\
&\lim_{n\to\infty} \bigl\{P(u_n) - \sum_{j=1}^J P(e^{it_n^j\Delta}\phi^j)-P(w_n^J)\bigr\} = 0.
\end{aligned}
\end{equation}
\item Asymptotic vanishing of remainders:
\begin{equation}\label{vanishing}
\limsup_{J\to J^*}\limsup_{n\to\infty} \|e^{it\Delta}w_n^J\|_{L_{t,x}^{\frac{2(N+2)}{N-2}}(\R\times\R^N)} = 0.
\end{equation}
\item Asymptotic orthogonality of parameters: for $j\neq k$,
\begin{equation}\label{orthogonality}
\lim_{n\to\infty} \biggl\{\log\bigl[\tfrac{\lambda_n^j}{\lambda_n^k}\bigr] + \tfrac{|x_n^j-x_n^k|^2}{\lambda_n^j\lambda_n^k} + \tfrac{|t_n^j(\lambda_n^j)^2-t_n^k(\lambda_n^k)^2|}{\lambda_n^j\lambda_n^k}\biggr\} = \infty.
\end{equation}
\end{itemize}
In addition, we may assume that either $t_n^j\equiv 0$ or $t_n^j\to\pm\infty$, and that either $x_n^j\equiv 0$ or $|x_n^j|\to\infty$.
\end{proposition}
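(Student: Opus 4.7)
I would follow the Keraani iterative extraction scheme, with adaptations in the final step to handle the inhomogeneous potential $P$. The essential tool is an \emph{inverse Strichartz inequality}: for any bounded sequence $\{f_n\}\subset \dot H^1$ with $\liminf_n \|e^{it\Delta}f_n\|_{L_{t,x}^{2(N+2)/(N-2)}} \geq \varepsilon>0$, one produces parameters $(\lambda_n,x_n,t_n)$ and a nontrivial profile $\phi\in \dot H^1$ satisfying $\|\phi\|_{\dot H^1}\gtrsim \varepsilon^{\gamma}A^{1-\gamma}$ (where $A=\sup_n\|f_n\|_{\dot H^1}$ and $\gamma>0$) such that, along a subsequence,
\[
(\lambda_n)^{\frac{N-2}{2}} [e^{-it_n(\lambda_n)^2\Delta} f_n](\lambda_n \cdot + x_n) \rightharpoonup \phi \quad\text{weakly in } \dot H^1.
\]
The standard route interpolates the Strichartz estimate against a Bernstein-type bound to localize the spacetime norm onto a single dyadic frequency block, reads off a scale $\lambda_n$ from that block, selects a spacetime concentration point $(t_n,x_n)$ via Fatou/duality, and takes a weak subsequential limit in $\dot H^1$.

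With this in hand, I would iterate: set $r_n^0=u_n$ and, at each stage, extract $\phi^J$ and parameters $(\lambda_n^J,x_n^J,t_n^J)$ via the inverse Strichartz, then put $w_n^J:=r_n^{J-1}-g_n^J[e^{it_n^J\Delta}\phi^J]=:r_n^J$; the process stops (possibly with $J^\ast=\infty$) once the Strichartz norm of the residual vanishes. Weak convergence in $\dot H^1$ gives the Pythagorean identity $\|\nabla r_n^J\|_{L^2}^2=\|\nabla r_n^{J-1}\|_{L^2}^2-\|\nabla \phi^J\|_{L^2}^2+o(1)$, from which the first line of \eqref{energy-decoupling} follows inductively. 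Since each extraction contributes $\|\phi^J\|_{\dot H^1}^2\gtrsim \varepsilon_J^{2\gamma}$ with $\varepsilon_J:=\lim_n\|e^{it\Delta}w_n^J\|_{L_{t,x}^{2(N+2)/(N-2)}}$, summability forces $\varepsilon_J\to 0$, which is exactly \eqref{vanishing}. The orthogonality \eqref{orthogonality} is shown by contradiction: if two frames failed to be orthogonal, transporting the later extraction into the earlier frame would yield a weak limit inconsistent with the already-extracted profile. The normalizations $t_n^j\equiv 0$ or $t_n^j\to\pm\infty$ and $x_n^j\equiv 0$ or $|x_n^j|\to\infty$ are arranged by a diagonal subsequence argument.

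The main obstacle is the $P$-decoupling, the second line of \eqref{energy-decoupling}. Using the critical identity $(\alpha+2)(N-2)/2=N-b$, the change of variables $y=(x-x_n^j)/\lambda_n^j$ gives
\[
P(g_n^j[e^{it_n^j\Delta}\phi^j]) = (\lambda_n^j)^b\int|\lambda_n^j y+x_n^j|^{-b}\,|e^{it_n^j\Delta}\phi^j(y)|^{\alpha+2}\,dy,
\]
which reduces to $P(e^{it_n^j\Delta}\phi^j)$ exactly when $x_n^j\equiv 0$, and tends to $0$ when $|x_n^j|/\lambda_n^j\to\infty$ by dominated convergence and the decay of the weight; thus only the centered profiles contribute to the right-hand side of \eqref{energy-decoupling}. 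Cross terms in the pointwise expansion of $|\sum_j g_n^j[e^{it_n^j\Delta}\phi^j]+w_n^J|^{\alpha+2}$ (controlled by $\bigl||a+b|^{\alpha+2}-|a|^{\alpha+2}-|b|^{\alpha+2}\bigr|\lesssim |a|^{\alpha+1}|b|+|a||b|^{\alpha+1}$) vanish by orthogonality: for distinct profiles, scale, translation, or time separation produces integrals that go to $0$, using dispersive decay for $e^{it\Delta}$ when $|t_n^j|\to\infty$ and a direct change-of-variables argument otherwise. The remainder contribution is absorbed by combining $\|e^{it\Delta}w_n^J\|_{L_{t,x}^{2(N+2)/(N-2)}}\to 0$ with the $\dot H^1$ boundedness through the weighted Hardy--Sobolev inequality \eqref{embedding-est}.
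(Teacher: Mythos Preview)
The paper does not prove this proposition at all: it simply cites Keraani \cite{Keraani} and \cite[Theorem~4.1]{Visan} and states the result.  Your sketch is precisely the Keraani/Visan extraction scheme those references carry out, so in that sense your approach coincides with what the paper invokes.  Your handling of the $P$-decoupling (which is the one part that is not literally in those references, since $P$ involves the inhomogeneous weight $|x|^{-b}$) is also along the right lines: the change of variables showing $P(g_n^j[e^{it_n^j\Delta}\phi^j])=P(e^{it_n^j\Delta}\phi^j)$ when $x_n^j\equiv 0$, and $\to 0$ when $|x_n^j|/\lambda_n^j\to\infty$, is correct and is exactly how one adapts the standard decoupling to this setting.

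There is one genuine slip in your last sentence.  The decoupling \eqref{energy-decoupling} is asserted for each \emph{fixed} $J$, and for fixed $J$ the quantity $\|e^{it\Delta}w_n^J\|_{L_{t,x}^{2(N+2)/(N-2)}}$ does \emph{not} tend to $0$ as $n\to\infty$; the vanishing \eqref{vanishing} holds only after the further limit $J\to J^*$.  So you cannot use smallness of the remainder's Strichartz norm to kill the profile/remainder cross terms in $P$.  What you actually have at stage $J$ is the weak convergence
\[
(g_n^j)^{-1}\bigl[e^{-it_n^j(\lambda_n^j)^2\Delta}w_n^J\bigr]\rightharpoonup 0 \quad\text{in }\dot H^1 \qquad (1\le j\le J),
\]
which is built into the iterative extraction.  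Combined with the Hardy--Sobolev embedding \eqref{embedding-est} and a Br\'ezis--Lieb/refined Fatou argument (local strong convergence in $L^{\alpha+2}_{\mathrm{loc}}$ via Rellich), this weak convergence is what forces the profile/remainder cross terms in $\int |x|^{-b}|\cdot|^{\alpha+2}$ to vanish as $n\to\infty$.  Replace your last sentence with this mechanism and the argument goes through.
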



The following proposition is the first essential ingredient for proving the main theorem. It establishes scattering solutions to \eqref{INLS} associated with initial data living sufficiently far from the origin. This result allows us to extend the construction of a minimal blowup solution from the radial to the non-radial setting; moreover, it guarantees that the compact solutions we construct remain localized near the origin, which facilitates the use of the localized virial argument.

Before starting the result, let us introduce the following notations:  For $x_n\in\R^N$ and $\lambda_n\in(0,\infty)$, we define
\[
g_n\phi(x) = \lambda_n^{-\frac{N-2}2}\phi(\tfrac{x-x_n}{\lambda_n}).
\]
Observe that
\begin{equation}\label{gc}
g_n e^{it\Delta} = e^{i\lambda_n^2 t\Delta}g_n.
\end{equation}

\begin{proposition}\label{P:embed} Let $\lambda_n \in (0,\infty)$, $x_n\in \R^N$, and $t_n\in\R$ satisfy
\[
\lim_{n\to\infty} \tfrac{|x_n|}{\lambda_n} = \infty \qtq{and} t_n\equiv 0 \qtq{or} t_n\to\pm\infty.
\]
Let $\phi\in \dot H^1$ and define
\[
\phi_n(x) = g_n [e^{it_n\Delta}\phi] = \lambda_n^{-\frac{N-2}2}[e^{it_n\Delta}\phi](\tfrac{x-x_n}{\lambda_n}).
\]
Then for all $n$ sufficiently large, there exists a global solution $v_n$ to \eqref{INLS} satisfying
\[
v_n(0) = \phi_n \qtq{and}\|v_n\|_{ S(\R)} \lesssim 1,
\]
with\footnote{Recalling the norm $\|\cdot\|_{S(\R)}$ is given in \eqref{scatteringsize}.} implicit constant depending only on $\|\phi\|_{\dot H^1}$.

Furthermore, for any $\eps>0$ there exists $N\in\mathbb{N}$ and $\psi\in C_c^\infty(\R\times\R^N)$ so that for $n\geq N$, we have
\[
\|\lambda_n^{\frac{N-2}2} v_n(\lambda_n^2(t-t_n),\lambda_n x+x_n)-\psi\|_{S(\R)} < \eps.
\]

\end{proposition}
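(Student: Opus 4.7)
The plan is to work in the rescaled frame. Define
\[
\tilde v_n(\tau,y) := \lambda_n^{\frac{N-2}{2}}\, v_n\!\bigl(\lambda_n^2(\tau-t_n),\, \lambda_n y + x_n\bigr).
\]
A direct computation, using $\alpha = (4-2b)/(N-2)$ to absorb the powers of $\lambda_n$, shows that $\tilde v_n$ should satisfy
\[
i\partial_\tau \tilde v_n + \Delta \tilde v_n + |y+y_n|^{-b}|\tilde v_n|^\alpha \tilde v_n = 0, \qquad y_n := x_n/\lambda_n,
\]
with $\tilde v_n(t_n,y) = e^{it_n\Delta}\phi(y)$. Since $|y_n|\to\infty$, the shifted weight $|y+y_n|^{-b}$ tends to zero pointwise, suggesting the rescaled equation is a small perturbation of the free Schr\"odinger equation. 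I then take the global linear evolution $w(\tau,y) := e^{i\tau\Delta}\phi$ as an approximate solution; note that $w(t_n) = \tilde v_n(t_n)$ so initial data match exactly at $\tau=t_n$, while $\sup_\tau\|w\|_{\dot H^1}=\|\phi\|_{\dot H^1}$ and $\|w\|_{S(\R)}\lesssim\|\phi\|_{\dot H^1}$ by Strichartz.

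\textbf{Key error estimate.} The main step is to show
\[
\|\nabla e_n\|_{L^2_\tau L^{2N/(N+2)}_y(\R\times\R^N)} \longrightarrow 0, \qquad e_n := |y+y_n|^{-b}|w|^\alpha w.
\]
By the product rule this reduces to controlling $\||y+y_n|^{-b-1}|w|^{\alpha+1}\|_{L^2 L^{2N/(N+2)}}$ and $\||y+y_n|^{-b}|w|^\alpha|\nabla w|\|_{L^2 L^{2N/(N+2)}}$. I would first approximate $\phi$ in $\dot H^1$ by $\phi^\delta\in C_c^\infty(\R^N)$: the contribution of $\phi-\phi^\delta$ is controlled by a power of $\delta$ via Lemma~\ref{LC} and the Hardy inequality. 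For the contribution from $\phi^\delta$, change variables $z := y+y_n$ and split into $\{|z|\le R\}$ and $\{|z|>R\}$: on the outer region $|z|^{-b}\le R^{-b}$, giving a bound by Strichartz times a small prefactor; on the inner region the free-evolution profile is evaluated at $z - y_n$, which lies far from $\mathrm{supp}(\phi^\delta)$ once $|y_n|\gg R$, so dispersive/decay estimates for $e^{i\tau\Delta}\phi^\delta$ yield smallness as $n\to\infty$, and dominated convergence concludes.

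\textbf{Stability and the approximation statement.} With initial data matching exactly and error vanishing, Proposition~\ref{stability} (after a time translation placing $\tau=t_n$ at the origin, applied on $I=\R$) gives a global solution $\tilde v_n$ with $\|\tilde v_n - w\|_{S^1(\R)}\to 0$. Hence $\|\tilde v_n\|_{S(\R)}\lesssim \|\phi\|_{\dot H^1}$, and scale invariance of $L^{2(N+2)/(N-2)}_{t,x}$ under the change of variables defining $\tilde v_n$ transfers the bound to $v_n$. For the final approximation claim, by density of $C_c^\infty(\R\times\R^N)$ in $L^{2(N+2)/(N-2)}_{t,x}$ applied to the Strichartz-integrable $w = e^{i\tau\Delta}\phi$, pick $\psi$ with $\|w-\psi\|_{S(\R)}<\eps/2$ and combine with $\|\tilde v_n - w\|_{S(\R)}<\eps/2$ for $n$ large.

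\textbf{Main obstacle.} The critical difficulty lies in the error estimate: although $|y+y_n|^{-b}\to 0$ pointwise, it remains singular at $y=-y_n$, a moving point about which the translated free-evolution profile of $\phi^\delta$ is concentrated for small times. Quantifying the smallness of $w$ in a neighborhood of $-y_n$, uniformly in $\tau$, is the technical heart of the argument: one must combine the compactness of $\mathrm{supp}(\phi^\delta)$ (for short times) with the dispersive decay of the free flow (for long times), together with Strichartz-tail smallness outside a bounded $\tau$-interval, to push the error to zero.
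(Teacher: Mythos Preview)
Your approach is correct and takes a genuinely different route from the paper's argument. The paper does \emph{not} use the full linear flow $e^{i\tau\Delta}\phi$ as its approximate solution. Instead it introduces a frequency projection $P_n$ (onto an annulus at scale $|x_n/\lambda_n|^{\pm\theta}$), a spatial cutoff $\chi_n$ supported away from the moving singularity, and a further time truncation at level $T$: on the central interval $[-T,T]$ the approximate solution is $\chi_n P_n e^{i\tau\Delta}\phi$, and outside it is the free evolution of the endpoint data. The error on the central interval is then handled by brute force---the cutoff $\chi_n$ guarantees the weight is bounded by $|x_n/\lambda_n|^{-b}$, and Bernstein inequalities absorb the extra derivatives introduced by $\chi_n$ and exploit $P_n$---while the outer intervals are treated exactly as you treat large $|\tau|$.

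Your argument trades these cutoffs for a density step: approximating $\phi$ by $\phi^\delta\in C_c^\infty$ lets you invoke the smoothness and rapid spatial decay of $e^{i\tau\Delta}\phi^\delta$ (uniformly on compact $\tau$-intervals, since the Schr\"odinger group is continuous on $\mathcal{S}$) to kill the contribution near the singularity $-y_n$, while outside a ball the weight is bounded and the Schwartz regularity of $w$ makes $|w|^\alpha\nabla w$ lie in whatever Lebesgue space is needed. The Strichartz tails and the translated Hardy inequality (which is translation-invariant) close the rest. This is conceptually cleaner---no auxiliary parameters $\theta$, no Bernstein bookkeeping, no two-region-in-time construction---at the cost of one extra limit (in $\delta$). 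Both arguments ultimately exploit the same phenomenon: the profile lives far from the singularity of $|y+y_n|^{-b}$, so the nonlinearity is effectively negligible. The paper's approach has the minor advantage of quantifying the error explicitly in terms of $|x_n/\lambda_n|$, whereas yours is a pure $\varepsilon$--$\delta$ argument; conversely, yours avoids the somewhat delicate linear-error terms $e_{n,T}^{\mathrm{lin}}$ that the cutoffs generate.
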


\begin{proof} The proof is based on the construction of suitable approximate solutions to \eqref{INLS} with initial condition asymptotically matching $\phi_n$.  Our approximation is based on solutions to the \emph{linear} Schr\"odinger equation.

To define our approximation, we let $\theta\in(0,1)$ be a small parameter to be determined below and introduce a frequency cutoff $P_n$ and spatial cutoff $\chi_n$ as follows.  First, we let
\[
P_n = P_{|\frac{x_n}{\lambda_n}|^{-\theta}\leq\cdot\leq |\frac{x_n}{\lambda_n}|^{\theta}}.
\]
Next, we take $\chi_n$ to be a smooth function satisfying
\[
\chi_n(x) = \begin{cases} 1 & |x+\tfrac{x_n}{\lambda_n}| \geq \tfrac12 |\tfrac{x_n}{\lambda_n}| \\ 0 & |x+\tfrac{x_n}{\lambda_n}| < \tfrac14 |\tfrac{x_n}{\lambda_n}|,\end{cases}
\]
with $|\partial^\alpha \chi_n|\lesssim |\tfrac{x_n}{\lambda_n}|^{-|\alpha|}$ for all multiindices $\alpha$.  Observe that $\chi_n\to 1$ pointwise as $n\to\infty$.

We now define approximations $\tilde v_{n,T}$ as follows:

\begin{definition}[Approximate solutions] First, let
\[
I_{n,T} := [a_{n,T}^-,a_{n,T}^+]:= [-\lambda_n^2 t_n - \lambda_n^2 T,-\lambda_n^2t_n+\lambda_n^2T]
\]
and for $t\in I_{n,T}$ define
\begin{align*}
\tilde v_{n,T}(t) & = g_n\left[\chi_n P_n e^{i(\lambda_n^{-2}t+t_n)\Delta}\phi\right] \\
& = \chi_n(\tfrac{\cdot-x_n}{\lambda_n})e^{i(t+\lambda_n^2 t_n)\Delta} g_n[P_n\phi].
\end{align*}

Next, let
\[
I_{n,T}^+ := (a_{n,T}^+,\infty),\quad I_{n,T}^- = (-\infty,a_{n,T}^-)
\]
and set
\[
\tilde v_{n,T}(t) =\begin{cases} e^{i(t-a_{n,T}^+)\Delta} [\tilde v_{n,T}(a_{n,T}^+)] & t\in I_{n,T}^+ \\ e^{i(t-a_{n,T}^-)\Delta}[\tilde v_{n,T}(a_{n,T}^-)]& t\in I_{n,T}^-.\end{cases}
\]
\end{definition}

We will establish the existence of the solutions $v_n$ by applying the stability result (Proposition~\ref{stability}). To this end, we need to show three conditions.

\textbf{Condition 1.}
\begin{equation}\label{vnt-bounds}
\limsup_{T\to\infty}\limsup_{n\to\infty}\bigl\{\|\tilde v_{n,T}\|_{L_t^\infty \dot H_x^1} + \|\tilde v_{n,T}\|_{S(\R)} \bigr\}\lesssim 1.
\end{equation}

\begin{proof} We estimate separately on $I_{n,T}$ and $I_{n,T}^{\pm}$.

We first estimate on $I_{n,T}$. Noting that
$
\|\chi_n\|_{L^\infty} + \|\nabla[\chi_n]\|_{L^N} \lesssim 1.
$
Combining the product rule, H\"older's inequality and the Sobolev embedding $\dot H^1\hookrightarrow L^{\frac{2N}{N-2}}$, we deduce that $\chi_n:\dot H^1\to \dot H^1$ boundedly. Then the $L_t^\infty \dot H_x^1$ bound on $I_{n,T}$ follows from the Strichartz estimate. The $\dot S^1$ bound on $I_{n,T}$ also follows from Sobolev embedding and Strichartz. In the same way, we obtain the estimate on $I_{n,T}^{\pm}$. 
\end{proof}

\textbf{Condition 2.}
\[
\lim_{T\to\infty}\limsup_{n\to\infty}\|\tilde v_{n,T}(0)-\phi_n\|_{\dot H^1} = 0.
\]

\begin{proof} We treat two cases:
\begin{itemize}
\item[(i)] $t_n\equiv 0$ (so that $0\in I_{n,T}$),
\item[(ii)] $t_n\to+\infty$ (so that $0\in I_{n,T}^+$ for all $n$ sufficiently large). The case $t_n\to-\infty$ is similar.
\end{itemize}
Indeed, Case (i). If $t_n\equiv 0$, then $
\| \tilde v_{n,T}(0)-\phi_n\|_{\dot H^1} = \|(\chi_n P_n-1)\phi\|_{\dot H^1} \to 0
$
as $n\to\infty$, using the dominated convergence theorem.

Case (ii) If $t_n\to\infty$, then applying \eqref{gc} and by construction, it follows that
\begin{align*}
\tilde v_{n,T}(0) & = e^{-ia_{n,T}^+\Delta}g_n[\chi_n P_n e^{i(\lambda_n^{-2}a_{n,T}^++t_n)\Delta}\phi] \\
&  = g_n e^{it_n\Delta} [e^{-iT\Delta}\chi_n P_n e^{iT\Delta}\phi].
\end{align*}
Thus,
\begin{align*}
\| \tilde v_{n,T}(0) - \phi_n \|_{\dot H^1} & = \|[e^{-iT\Delta}\chi_n P_n e^{iT\Delta}-1]\phi\|_{\dot H^1}\\
& = \| [\chi_n P_n -1]e^{iT\Delta}\phi \|_{\dot H^1} \to 0\;\;\;\textnormal{as}\;\;\; n\to\infty.
\end{align*}
\end{proof}

\textbf{Condition 3.} Defining
\[
e_{n,T}=(i\partial_t + \Delta)\tilde v_{n,T} + |x|^{-1}|\tilde v_{n,T}|^2 \tilde v_{n,T},
\]
we have
\[
\lim_{T\to\infty}\limsup_{n\to\infty}\|\nabla e_{n,T} \|_{S'(L^2;\R)} = 0.
\]

\begin{proof} We estimate on $I_{n,T}$ and $I_{n,T}^\pm$.

We first treat the interval $I_{n,T}$.  We write $e_{n,T}=e_{n,T}^{\text{lin}}+e_{n,T}^{\text{nl}}$, with
\begin{align*}
e_{n,T}^{\text{lin}} & = \Delta[\chi_n(\tfrac{x-x_n}{\lambda_n})] e^{i(t+\lambda_n^2 t_n)\Delta}g_n[P_n\phi] \\
& \quad + 2\nabla[\chi_n(\tfrac{x-x_n}{\lambda_n})]\cdot e^{i(t+\lambda_n^2 t_n)\Delta}\nabla[g_n P_n\phi]
\end{align*}
and
\[
e_{n,T}^{\text{nl}} = \lambda_n^{-\frac{(N-2)\alpha}{2}} g_n\bigl\{ |\lambda_n x+ x_n|^{-b} \chi_n^{\alpha+1} |\Phi_n|^{\alpha}\Phi_n\bigr\},
\]
where
\[
\Phi_n(t,x) = P_n e^{i(\lambda_n^{-2} t+t_n)\Delta} \phi.
\]

When we apply the gradient to $e_{n,T}^{\text{lin}}$, we obtain a sum of terms of the form
\[
\partial^j[\chi_n(\tfrac{x-x_n}{\lambda_n})] \cdot e^{i(t+\lambda_n^2 t_n)\Delta} \partial^{3-j}[g_n P_n \phi]\qtq{for}j\in\{1,2,3\},
\]
where $\partial$ denotes a derivative in $x$. Hence, by H\"older's inequality, Bernstein's inequality, and the fact that $j\geq 1$, one has
\begin{align*}
\| \partial^j&[\chi_n(\tfrac{x-x_n}{\lambda_n})] \cdot e^{i(t+\lambda_n^2 t_n)\Delta}\partial^{3-j}[g_n P_n\phi]\|_{L_t^1 L_x^{2}(I_{n,T}\times\R^N)} \\
& \lesssim |I_{n,T}|\, \|\partial^j[\chi_n(\tfrac{x-x_n}{\lambda_n})]\|_{L_x^\infty} \|\partial^{3-j}[g_nP_n\phi]\|_{L_t^\infty L_x^2} \\
& \lesssim \lambda_n^2T\cdot\lambda_n^{-j}|\tfrac{x_n}{\lambda_n}|^{-j}\cdot\lambda_n^{j-2}\|\partial^{3-j}P_n\phi\|_{L_t^\infty L_x^2} \\
& \lesssim T|\tfrac{x_n}{\lambda_n}|^{-j}|\tfrac{x_n}{\lambda_n}|^{|2-j|\theta}\to 0\qtq{as}n\to\infty\;,\;\textnormal{for}\;\;\theta\;\; \textnormal{small}.
\end{align*}

Estimating $e^{\text{nl}}_{n,T}$ on $I_{n,T}$. A change of variables and H\"older's inequality implies that
\[
\|\nabla e_{n,T}^{\text{nl}}\|_{L_t^2 L_x^{\frac{2N}{N+2}}(I_{n,T}\times\R^N)}  \leq \lambda_n^{b}T^{\frac12}\| \nabla\bigl[ |\lambda_n x+ x_n|^{-b} \chi_n^{\alpha+1}|\Phi_n|^{\alpha}\Phi_n\bigr]\|_{L_t^\infty L_x^{\frac{2N}{N+2}}(I_{n,T}\times\R^N)}.
\]
We next note that
\[
\|\partial^j\bigl[\lambda_n x+x_n|^{-b}\chi_n^{\alpha+1}\bigr]\|_{L_x^\infty} \lesssim |\tfrac{x_n}{\lambda_n}|^{-j}|x_n|^{-b},\quad j\in\{0,1\}.
\]
So, applying the product rule, H\"older's inequality, Sobolev embedding, and Bernstein, we get
\begin{align*}
\| \nabla e_{n,T}^{\text{nl}}\|_{L_t^2 L_x^{\frac{2N}{N+2}}(I_{n,T}\times\R^N)} &\lesssim T^{\frac12}|\tfrac{x_n}{\lambda_n}|^{-b}\|\Phi_n\|_{L_t^\infty L_x^{N\alpha}}^{\alpha}  \sum_{j=0}^1 |\tfrac{x_n}{\lambda_n}|^{-j} \|\partial^{1-j}\Phi_n\|_{L_t^\infty L_x^{2}} \\
& \lesssim T^{\frac12}|\tfrac{x_n}{\lambda_n}|^{-1+\theta(5/2-3/r)}\|\phi\|_{\dot H^1}^{\alpha+1} \to 0 \qtq{as}n\to\infty.
\end{align*}

Let's focus on the intervals $I_{n,T}^\pm$. We'll only address $I_{n,T}^\pm$ since a similar argument applies to the remaining interval.
\[
e_{n,T} = |x|^{-b} |V_{n,T}|^{\alpha} V_{n,T},
\]
where
\[
V_{n,T}= e^{i(t-a_{n,T}^+)\Delta}[\tilde v_{n,T}(a_{n,T}^+)].
\]
Estimating as in Lemma~\ref{LC} and using \eqref{vnt-bounds}, we deduce
\begin{align*}
\| \nabla e_{n,T}\|_{L_t^2 L_x^{\frac{2N}{N-2}}(I_{n,T}^+\times\R^N)} & \lesssim \| V_{n,T}\|_{L_{t,x}^{\frac{2(N+2)}{N-2}}(I_{n,T}^+\times\R^N)}^{\alpha-b}\|\nabla V_{n,T}\|_{L_t^p L_x^{r}(I_{n,T}^+\times\R^N)}^{1+b}\\
& \lesssim \|\phi\|_{\dot H^1}^{1+b}\|e^{it\Delta}[\tilde v_{n,T}(a_{n,T}^+)]\|_{L_{t,x}^{\frac{2(N+2)}{N-2}}(\R_+\times\R^N)}^{\alpha-b}.
\end{align*}
We now use the definition of $\tilde v_{n,T}$ and \eqref{gc} to write
\[
e^{it\Delta}\tilde v_{n,T}(a_{n,T}^+) = e^{it\Delta} g_n [\chi_n P_n e^{iT\Delta}\phi] = g_n e^{i\lambda_n^{-2}t\Delta}[\chi_n P_n e^{iT\Delta}\phi].
\]
Thus by a change of variables, Sobolev embedding, and Strichartz, we have
\begin{align}
\|e^{it\Delta}&[\tilde v_{n,T}(a_{n,T}^+)]\|_{L_{t,x}^{\frac{2(N+2)}{N-2}}(\R_+\times\R^N)}\nonumber \\
& = \|e^{it\Delta}\chi_n P_n e^{iT\Delta}\phi \|_{L_{t,x}^{\frac{2(N+2)}{N-2}}(\R_+\times\R^N)}\nonumber \\
& \lesssim \|\nabla[\chi_n P_n -1]e^{iT\Delta}\phi\|_{L_x^2} + \| e^{it\Delta}\phi\|_{L_{t,x}^{\frac{2(N+2)}{N-2}}([T,\infty)\times\R^N)}.\label{l-e-t}
\end{align}
Noting that the first term in \eqref{l-e-t} tends to zero as $n\to\infty$ by the dominated convergence theorem, and the second term in \eqref{l-e-t} tends to zero as $T\to\infty$ by Strichartz and the monotone convergence theorem.
\end{proof}

With conditions 1--3 in place, we apply Proposition~\ref{stability} 
to obtain the existence of solutions $v_n$ to \eqref{INLS} initial data $\phi_n$ and finite $S(\R)$ norm. Moreover,
\begin{equation}\label{embed-cc1}
\limsup_{T\to\infty}\lim_{n\to\infty} \|v_n-\tilde v_{n,T}\|_{S(\R)} = 0.
\end{equation}

We now adapt the arguments from \cite{KMVZZ} to the approximation by $C_c^\infty$ functions. We only give the sketch of the proof. Let $\eps>0$ and by \eqref{embed-cc1}, it suffices to show that 
\[
\|\lambda_n^{\frac{N-2}2} \tilde v_{n,T}(\lambda_n^2(t-t_n),\lambda_n x+x_n)-\psi(t,x)\|_{ S(\R)} \lesssim \eps
\]
for all $n,T$ sufficiently large.  In fact, we will see that $\psi$ will be taken to be an approximation to $e^{it\Delta}\phi$.

Observe that for $t\in(-T,T)$, we have
\[
\lambda_n^{\frac{N-2}2} \tilde v_{n,T}(\lambda_n^2(t-t_n),\lambda_n x+x_n) = \chi_n(x) e^{it\Delta}P_n\phi(x).
\]
We can approximate this in $S(\R)$ by $e^{it\Delta}\phi$ for large $n$, which can in turn be well-approximated by a compactly supported function of space-time on $[-T,T]\times\R^N$.  For $t>T$ (the case $t<T$ being treated similarly)
\begin{align*}
\lambda_n^{\frac{N-2}2}\tilde v_{n,T}(\lambda_n^2(t-t_n),\lambda_n x+x_n) & = g_n^{-1} e^{i\lambda_n^2(t-T)\Delta}g_n \chi_n e^{iT\Delta}P_n \phi \\
& = e^{it\Delta}[e^{-iT\Delta}\chi_n e^{iT\Delta}]P_n \phi.
\end{align*}
Again, we can approximate this in $S(\R)$ by $e^{it\Delta}\phi$ for large $n$, which is again well-approximated by a compactly supported function of space-time on $[T,\infty)\times\R^N$.
\end{proof}

Next, we show the main ingredient of our work, the Palais-Smale Condition. The proof is technical, combining profile decomposition with the first main ingredient to construct the nonlinear profiles. Then, two lemmas are crucial to complete the proof: the existence of a bad profile and the decoupling of kinetic energy.

\begin{proposition}{(\bf Palais-Smale Condition)}\label{PS}. Let $u_n: I_n\times \R^N \rightarrow \C$ be a sequence of maximal life-span solutions to \eqref{INLS} and $t_n \in I_n$. Suppose that
\begin{equation}\label{Hyp1}
\lim_{n\rightarrow\infty}sup_{t\in I_n}\|u_n(t)\|^2_{\dot{H}^1}= Kc < \|W\|^2_{\dot{H}^1}
\end{equation}
$$
\lim_{n\rightarrow \infty} \|u_n\|_{S(t\geq t_n)} = \lim_{n\rightarrow \infty} \|u_n\|_{S(t\leq t_n)} = \infty,
$$
then there exists $\{\lambda_n\}\subset \R^{+}$ such that $\{\lambda_n^{-\frac{N-2}{2}}u_n(t_n,\frac{x}{\lambda_n})\}$ is precompact in $\dot{H}^1$.
\end{proposition}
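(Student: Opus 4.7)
\emph{Proof proposal.} The plan is to execute the Kenig--Merle/Killip--Visan concentration-compactness scheme, powered by the two new ingredients flagged in the paper: the embedding Proposition~\ref{P:embed} for profiles located asymptotically far from the singularity of $|x|^{-b}$, and the Kinetic Energy Decoupling for nonlinear profiles, together with the bad-profile lemma (Lemma~\ref{bp}). The supporting tools are the linear profile decomposition (Proposition~\ref{P:LPD}), the stability theory (Proposition~\ref{stability}), and the variational bounds of Section~\ref{S:notation}.

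After using time translation to reduce to $t_n\equiv 0$, so that $\|u_n(0)\|_{\dot H^1}^2\leq Kc+o(1)$ while $\|u_n\|_{S([0,\infty))}$ and $\|u_n\|_{S((-\infty,0])}$ both tend to $\infty$, I would apply Proposition~\ref{P:LPD} to $\{u_n(0)\}\subset\dot H^1$ and extract profiles $\phi^j$ with parameters $(\lambda_n^j,x_n^j,t_n^j)$. A nonlinear profile $v_n^j$ is then produced case by case: if $|x_n^j|/\lambda_n^j\to\infty$, Proposition~\ref{P:embed} yields a global $v_n^j$ with $\|v_n^j\|_{S(\R)}\lesssim 1$ (constant depending on $\|\phi^j\|_{\dot H^1}$) together with a $C_c^\infty$ space--time approximation; if $x_n^j\equiv 0$, the energy-critical scale invariance of \eqref{INLS} lets me set $v_n^j(t,x)=(\lambda_n^j)^{-\frac{N-2}{2}}v^j((\lambda_n^j)^{-2}t+t_n^j,\,(\lambda_n^j)^{-1}x)$, where $v^j$ solves \eqref{INLS} with data $\phi^j$ at $t=0$ or is the wave-operator image of $\phi^j$ when $|t_n^j|\to\infty$. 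Because \eqref{energy-decoupling} forces $\|\phi^j\|_{\dot H^1}^2\leq Kc<\|W\|_{\dot H^1}^2$, every centered $v^j$ lies in the subthreshold regime of Lemma~\ref{energy-trapping}.

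Next, set $u_n^{\mathrm{app},J}:=\sum_{j=1}^{J}v_n^j+e^{it\Delta}w_n^J$ and invoke Proposition~\ref{stability} to identify $u_n$ with $u_n^{\mathrm{app},J}$. Two inputs are required: a uniform $\dot H^1$-bound for $u_n^{\mathrm{app},J}$, which is the Kinetic Energy Decoupling propagating the profile orthogonality from $t=0$ to every time of existence, and smallness of the error
\[
(i\partial_t+\Delta)u_n^{\mathrm{app},J}+|x|^{-b}|u_n^{\mathrm{app},J}|^\alpha u_n^{\mathrm{app},J}
\]
in the dual Strichartz norm, which in turn follows from the asymptotic orthogonality \eqref{orthogonality} (so that cross terms between distinct nonlinear profiles vanish after changes of variables, using the $C_c^\infty$ approximation from Proposition~\ref{P:embed}), the asymptotic vanishing \eqref{vanishing} of $e^{it\Delta}w_n^J$, and the nonlinear pointwise estimates \eqref{FEI}--\eqref{SECONDEI} combined with Lemma~\ref{LC}. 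If each $\|v_n^j\|_{S(\R)}$ were uniformly bounded, Proposition~\ref{stability} would force $\|u_n\|_{S(\R)}<\infty$, contradicting the hypothesis; Lemma~\ref{bp} thus produces at least one bad index $j_0$ with $\|v_n^{j_0}\|_{S(\R)}\to\infty$, and Proposition~\ref{P:embed} forces $x_n^{j_0}\equiv 0$ for it.

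To finish, coupling the kinetic energy decoupling with the minimality inherent in the critical level $Kc$ (any profile with strictly smaller $\dot H^1$-norm yields a nonlinear profile with bounded $S$-norm, since otherwise the hypothesis could be reapplied at a strictly smaller level, contradicting minimality) forces every non-bad profile to vanish, $J^{*}=1$, and $\|w_n^{1}\|_{\dot H^1}\to 0$. Setting $\lambda_n:=\lambda_n^{j_0}$, one obtains
\[
\lambda_n^{\frac{N-2}{2}}u_n(0,\lambda_n x)=e^{it_n^{j_0}\Delta}\phi^{j_0}+o_{\dot H^1}(1),
\]
and the possibility $|t_n^{j_0}|\to\infty$ is ruled out by comparing the dispersive decay of $e^{it_n^{j_0}\Delta}\phi^{j_0}$ in the $S$-norm with the one-sided blow-up of $\|u_n\|_{S}$, so $t_n^{j_0}\equiv 0$ and the rescaled sequence converges strongly in $\dot H^1$. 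The main technical obstacle throughout is the nonlinear Kinetic Energy Decoupling: unlike the conserved energy, the kinetic energy is not preserved along the INLS flow, so the orthogonality between profiles must be propagated uniformly in time by a dedicated argument, and this non-conservative feature is precisely what separates the present non-radial inhomogeneous scheme from the homogeneous one in \cite{GM}.
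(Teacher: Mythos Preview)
Your plan follows the same concentration--compactness route as the paper's proof and identifies all the right tools (Proposition~\ref{P:LPD}, Proposition~\ref{P:embed}, Proposition~\ref{stability}, Lemma~\ref{bp}, and the Kinetic Energy Decoupling). Two points of imprecision are worth flagging.

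First, the appeal to Lemma~\ref{energy-trapping} for the centered profiles is misplaced: that lemma requires the \emph{energy} condition $E[u_0]<E[W]$, which is not part of the hypotheses here. The paper never uses it in this proof; instead, it works directly with the kinetic-energy threshold $K_c$ (small data for large $j$, and the very definition of $K_c$ to get $\sup_t\|\nabla v_n^{j}\|_{L^2}^2\geq K_c$ for any bad profile).

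Second, your uniqueness step (``any profile with strictly smaller $\dot H^1$-norm yields a nonlinear profile with bounded $S$-norm'') conflates $\|\phi^j\|_{\dot H^1}$ with $\sup_t\|\nabla v_n^{j}(t)\|_{L^2}$, and it does not by itself force the non-bad profiles and the remainder to vanish. The paper's argument is sharper: after isolating the bad indices $1\leq j\leq J_1$, it introduces intervals $I_n^m$ on which \emph{every} nonlinear profile has finite $S$-norm (so that the stability theory gives $\|u_n^{J}-u_n\|_{L_t^\infty\dot H_x^1(I_n^m)}\to 0$), singles out one bad profile via pigeonhole with $\limsup_{n,m}\sup_{t\in I_n^m}\|\nabla v_n^{1}(t)\|_{L^2}^2\geq K_c$, and only then applies the Kinetic Energy Decoupling on $I_n^m$ to compare $K_c\geq \sup_t\|\nabla u_n(t)\|_{L^2}^2$ with $\sum_j\|\nabla v_n^{j}(t)\|_{L^2}^2+\|\nabla w_n^J\|_{L^2}^2$. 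This budget argument is what kills the remaining profiles and $w_n^J$; your sketch places the decoupling lemma too early (as an input to the $\dot H^1$-bound for $u_n^{\mathrm{app},J}$) and leaves the actual saturation step implicit.
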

\begin{proof}
By the time translation invariance, we may assume that $t_n \equiv 0$, then
$$
\lim_{n\rightarrow \infty} \|u_n\|_{S(t\geq 0)} = \lim_{n\rightarrow \infty} \|u_n\|_{S(t\leq 0)} = \infty.
$$
Given that $u_{n,0} = u_n(0)$ is bounded \eqref{Hyp1}, we apply the linear profile decomposition, possibly passing to a subsequence
\begin{align}
u_n(0)=\sum_{j=1}^{J}g_n^j[e^{it_n^j\Delta}\phi^j]+w_n^J.
\end{align}

We construct solutions (nonlinear profiles) to INLS: fixing $j$, if $\big|\frac{x_n^j}{\lambda_n^j}\big|\to \infty $ up to a subsequence, by Proposition \ref{P:embed}, there exists a global solution $v_n^j$ to \eqref{INLS} with initial data $g_n^j[e^{it_n^j\Delta}\phi^j]$ and $\|v_n^j\|_{S(\R)}<\infty$. If $\big|\frac{x_n^j}{\lambda_n^j}\big|$ tends to finite number, we can always assume $x_n^j\equiv0$ in this case. For the setting $x_n^j\equiv0$, following the argument of \cite{KM}, we can construct the nonlinear profile $v^j$ associate to $(\phi^j,\{t_n^j\})$ such that
\begin{equation}\label{PS00}
\|v^j(t_n^j)-e^{it_n^j\Delta}\phi^j\|_{\dot{H}^1}\to 0,\ \ n\to\infty. 
\end{equation}
In that case, we define $v^j_n(t, x) = \lambda_n^{-\frac{N-2}{2}}v^j\left( \frac{t}{(\lambda_n^j)^2} +t_n^j, \frac{x}{\lambda_n^j}  \right).$ Note that, $v_n^j(0)=g_n^jv^j(t_n^j)$. 

On the other hand, the relation \eqref{energy-decoupling} and small data theory, implies that there exists $J_0 \geq 1$ such that for all $j \geq J_0$, $\|\phi^j\|_{\dot{H}^1}$ small and the solutions $v^j_n$
are global with
\begin{equation}\label{PSC1}
\|v^j_n\|_{L^\infty\dot{H}^1(\R)}+\|v_n^j\|_{S(\R)}\lesssim \|\phi^j\|_{\dot{H}^1}. 
\end{equation}
Note that, for $j < J_0$ there exists at least one “bad” nonlinear profile such that
\begin{equation}\label{PSC2}
 \|v_n^j\|^2_{L^\infty_t\dot{H}^1({I^j_n})}\geq K_c . 
\end{equation}
Indeed, If $\|v_n^j\|^2_{L^\infty_t\dot{H}^1({I^j_n})}< K_c$ for all $j$, the definition of $K_c$ implies that $v_n^j$ exists globally and $\|v^j_n\|_{S(\R)}$ is bounded, contradicting the lemma below.
\begin{lemma}{(\bf At least one bad profile)}\label{bp} There exists $1 \leq j_0 < J_0$ such that
\begin{equation}\label{bp1}
\| v_n^{j_0}\|_{S([0,T_{n,j_0}))} = \infty.    
\end{equation}
\end{lemma}
\begin{proof}
The proof follows by contradiction: if \eqref{bp1} does not hold, then $T_{n,j_0}= \infty$ and for all $1 \leq j < J_0$,
\begin{equation}\label{bp00}
\limsup_{n \rightarrow \infty} \| v_n^{j}\|_{S([0,\infty))}
< \infty.
\end{equation}
Subdividing $[0, \infty)$ into intervals where the scattering size of $v^j_n$ is small, applying the Strichartz inequality on each such interval, and then summing,
we obtain
\begin{equation}\label{bp01}
\|\nabla v_n^{j}\|_{W([0,\infty))}<\infty.    
\end{equation}
Combining \eqref{PSC1}, \eqref{bp00} and \eqref{Hyp1}, we deduce (for $n$ sufficiently large)
\begin{equation*}
   \sum_{j\geq 1} \| v_n^{j}\|_{S([0,\infty))}\lesssim 1+ \sum_{j\geq J_0}\|\nabla \phi^j\|_{L^2}^2\lesssim 1+E_c.
\end{equation*}
From these assumptions, we'll establish a bound on the forward-in-time scattering size of $u_n$, thus leading to a contradiction. Indeed, define
\[
u_n^{J} = \sum_{j=1}^{J} v_n^j+e^{it\Delta}w_n^J.
\]
If the following conditions hold:
\begin{align}
& \limsup_{n\to\infty}\bigl\{ \|u_n^{J}\|_{L_t^\infty \dot H_x^1} + \|u_n^{J}\|_{S(\R)}\bigr\}\lesssim 1, \label{unJbds} \\
&\limsup_{n\to\infty}\|u_n(0)-u_n^{J}\|_{\dot{H}^1}=0,\label{unJvan}\\
& \limsup_{n\to\infty} \|\nabla[(i\partial_t + \Delta)u_n^{J} + |x|^{-b}|u_n^{J}|^{\alpha+1} u_n^{J}] \|_{L_t^2 L_x^{\frac{2N}{N+2}}}\lesssim \varepsilon, \label{unJapprox}
\end{align}
then the stability result (Proposition \ref{stability}) shows that $\|u_n\|_{S([0,\infty))}<\infty$, which contradicts the fact that $\|u_n\|_{S\left([0,\infty)\right)}\to \infty$ as $n\to\infty$.

\

We first show \eqref{unJbds}. Note that
\begin{align*}
\Big|\sum_{j=1}^Jv_n^j\Big|^2=\sum_{j=1}^J|v_n^j|^2+\sum_{j\neq k}v_n^jv_n^k,
\end{align*}
then taking the $L_{t}^{\frac{\Bar{r}}{2}}L_x^{\frac{\Bar{r}}{2}}$ norm in both side, we can get
\begin{align*}
 \Big\|(\sum_{j=1}^{J}v_n^j)^2\Big\|_{L_{t}^{\frac{\bar{r}}{2}}L_x^{\frac{\bar{r}}{2}}}\leq
\sum_{j=1}^{J}\|v_n^j\|_{S([0,\infty))}^2+\sum_{j\neq k}\|v_n^jv_n^k\|_{L_{t}^{\frac{\bar{r}}{2}}L_x^{\frac{\bar{r}}{2}}}.
\end{align*}
Since $\lim_{n\to\infty}\|v_n^jv_n^k\|_{L_{t}^{\frac{\bar{r}}{2}}L_x^{\frac{\bar{r}}{2}}}=0$ (by the orthogonality conditions \eqref{orthogonality}) we have
\begin{align*}
\limsup_{n\to\infty}\|u_n^J\|\leq \limsup_{n\to\infty}\sum_{j=1}^J\|v_n^{j}\|_{S([0,\infty))}\lesssim 1+E_c,
\end{align*}
which is independent of $J$. Moreover, $\|e^{it\Delta}w_n^J\|_{S\left([0,\infty)\right)}\lesssim \|w_n^J\|_{\dot{H}^1}$ is finite. Therefore, the first estimate holds. In the same way (using \eqref{bp01}), we also obtain that $\|u_n^J\|_{L^\infty \dot{H}^1}$ is bounded.

\ We now consider \eqref{unJvan}. Using the fact that $v_n^j(0)=g_n^jv^j(t_n^j)$
$$
\|u_{0,n}-u_{n}^{J}(0)\|_{\dot{H}^1}\lesssim\sum_{j=1}^J \left\|\left( g_n[e^{itj_n\Delta}\phi^j]-g_n^jv^j(t_n^j)\right)\right\|_{\dot{H}^1}\lesssim \sum_{j=1}^J \| e^{itj_n\Delta}\phi^j-v^j(t_n^j)\|_{\dot{H}^1},
$$
which goes to zero, as $n\rightarrow \infty$, by \eqref{PS00}.

\ Finally, we turn to show \eqref{unJapprox}. To this end, we write $f(z)=|z|^{\alpha} z$ and observe
\begin{align}\label{enJ1}
e_n=(i\partial_t + \Delta)u_n^J + |x|^{-b} f(u_n^J)=&|x|^{-b}\left[f\bigl(\sum v_n^j\bigr) - \sum f(v_n^j)\right].
\end{align}
Given \eqref{SECONDEI}, to estimate $\nabla e_n$, it is sufficient to estimate terms of the following types:
\begin{itemize}
\item[(1)] $ |x|^{-b} |v_n^j|^{\alpha-1}\cdot v_n^\ell \cdot T v_n^k $,\quad $T\in\{|x|^{-1},\nabla\}$,
\item[(2)] $ |x|^{-b} |e^{it\Delta}w_n^{J}|^{\alpha} \cdot T e^{it\Delta}w_n^{J} $,\quad $T\in\{|x|^{-1},\nabla\}$,
\item[(3)] $ |x|^{-b} |e^{it\Delta}w_n^{J}|^{\alpha} \cdot \nabla u_n^{J}$,
\item[(4)] $ |x|^{-b} |u_n^{J}|^{\alpha-1}\cdot e^{it\Delta}w_n^{J} \cdot \nabla u_n^{J}$,
\item[(5)] $ |x|^{-b} |u_n^{J}|^{\alpha} \cdot T e^{it\Delta}w_n^{J} $,\quad $T\in\{|x|^{-1},\nabla\}$,
\end{itemize}
where we have $j\neq k$ and $\ell\in\{1,\dots,J\}$.  The first term also involves a constant $C_{J}$ that grows with $J$.  However, we will shortly see that for each fixed $J$, these terms tend to zero in $L_t^2 L_x^{\frac{2N}{N+2}}$ as $n\to\infty$, so that this constant is ultimately harmless.

Using the spaces appearing in Lemma~\ref{LC}, it follows that
\begin{align*}
\|(1)\|_{L_t^2 L_x^{\frac{2N}{N+2}}}\lesssim
\begin{cases}
  \||v_n^k|^{\alpha-b}Tv_n^j\|_{L_t^{q_1}L_x^{r_1}}\|\nabla v_n^\ell\|_{L_t^{q_0}L_x^{r_0}}\|\nabla v_n^k\|_{L_t^{q_0}L_x^{r_0}}^{b-1}\;\; b\geq1;\\
  \||v_n^k|^{\alpha-1}Tv_n^j\|_{L_t^{q_2}L_x^{r_2}}\|\nabla v_n^\ell\|_{L_t^{q_0}L_x^{r_0}}^b\|v_n^\ell\|_{S\left([0,\infty)\right)}^{1-b}\ \ \ \ \ b<1;
\end{cases}
\end{align*}
where $(q_1,r_1)$ and $(q_2,r_2)$ satisfy
$$\frac{N+2}{2N}=\frac1{r_1}+\frac{b}{r_0},\ \ \frac{1}{2}=\frac1{q_1}+\frac{b}{q_0}$$
and
$$\frac{N+2}{2N}=\frac1{r_2}+\frac{b}{r_0}+\frac{1-b}{\bar r},\ \ \frac{1}{2}=\frac1{q_2}+\frac{b}{q_0}+\frac{1-b}{\bar q}.$$
As the norms $\|v_n^\ell\|_{S([0,\infty))}$ and $\|\nabla v_n^\ell\|_{L_t^{q_0}L_x^{r_0}}$ are bounded. It suffices to show that
\begin{align}\label{np-decouple}
  \lim_{n\to\infty}\||v_n^k|^{\alpha-b}Tv_n^j\|_{L_t^{q_1}L_x^{r_1}}=0\qtq{for}j\neq k.
\end{align}
In fact, it follows from approximation by functions in $C_c^\infty(\R\times\R^d)$ and the use of the orthogonality conditions \eqref{orthogonality} (see e.g. \cite{Keraani} or \cite[Lemma~7.3]{Visan}).

 Next, we turn to estimate terms $(2),(3)$. Choosing $J>J'$ large enough such that
 $$\lim_{n\to\infty}\|e^{it\Delta}w_n^{J}\|_{S\left([0,\infty)\right)}<\frac{\varepsilon}{J'}.$$
 By the lemma \ref{LC}, we estimate
 \begin{align*}
\|(2)\|_{L_t^2 L_x^{\frac{2N}{N+2}}}\lesssim
\|e^{it\Delta}w_n^{J}\|_{S\left([0,\infty)\right)}^{\alpha-b}\|\nabla e^{it\Delta}w_n^{J}\|_{W(\R)}^{1+b}
\end{align*}
 and
 $$
 \|(3)\|_{L_t^2 L_x^{\frac{2N}{N+2}}}\lesssim \|e^{it\Delta}w_n^{J}\|_{S\left([0,\infty)\right)}^{\alpha-b}\|\nabla u_n^{J}\|_{W(\R)}\|\nabla e^{it\Delta}w_n^{J}\|_{W(\R)}^{b}.
 $$
Together with vanishing condition \eqref{vanishing} and $\|w_n^{J}\|_{\dot H^1}\lesssim1$, we have
$$\limsup_{n\to\infty}\big(\|(2)\|_{L_t^2 L_x^{\frac{2N}{N+2}}}+\|(3)\|_{L_t^2 L_x^{\frac{2N}{N+2}}}\big)\leq \varepsilon.$$

Now we turn to estimate $(4)$. For $0<b<1$, thus
$$
 \|(4)\|_{L_t^2 L_x^{\frac{2N}{N+2}}}\lesssim \|u_n^{J}\|_{S\left([0,\infty)\right)}^{\alpha-1}\|\nabla u_n^{J}\|_{W(\R)}\|\nabla e^{it\Delta}w_n^{J}\|_{W(\R)}^{b}\|e^{it\Delta}w_n^{J}\|_{S\left([0,\infty)\right)}^{1-b}.
 $$
Similarly, we have
$$\limsup_{n\to\infty}\|(4)\|_{L_t^2 L_x^{\frac{2N}{N+2}}}\leq \varepsilon.$$
For the setting $b\geq1$, we only estimate a single term of the form, for $j\in\{1,...,J\}$,
$$\||x|^{-b} |v_n^{j}|^{\alpha-1}\cdot e^{it\Delta}w_n^{J} \cdot \nabla v_n^{j}\|_{L_t^2L_x^{\frac{2N}{N+2}}}.$$
By density, we may assume $v^{j}\in C_c^\infty(\R\times\R^N\setminus\{0\})$. Applying H\"older's inequality, the problem further reduces to showing that
\begin{align}\label{YU}
  \lim_{J\to J^*}\limsup_{n\to\infty}\|\nabla \tilde{w}_n^{J}\|_{L_{t,x}^2(K)}=0\ \text{for any compact $K\subset \R\times \R^N$},
\end{align}
where $\tilde{w}_n^{J}:=(\lambda_n^j)^{\frac{N-2}2}e^{i((\lambda_n^j)^2t-t_n^j)\Delta}w_n^{J}(\lambda_n^jx+x_n^j).$
This finally follows from an interpolation argument using a local smoothing estimate and vanishing \eqref{vanishing} (See \cite[Lemma 2.12]{KV} for details). The term $(5)$ can be estimated as above, then we complete the proof.

\end{proof}

Returning to the proof of Proposition \ref{PS} and rearranging the indices, we may
assume that there exists $1 \leq J_1 < J_0$ such that
$$
\limsup_{n\rightarrow \infty} \|v^j_n\|_{S([0,T_n^j))} = \infty\;\; \textnormal{for}\;\; 1 \leq j \leq J_1\;\textnormal{and}\;\; \limsup_{n\rightarrow \infty}\|v^j_n\|_{S([0,\infty))} <\infty  \;\textnormal{for}\; j > J_1.
$$

For each $m, n \geq 1$ let us define an integer $k_{n,m}=k(n,m) \in \{1, . . . , J_1\}$ and an interval $I_n^m=[0,s]$ by
$$
\sup_{1\leq j\leq J_1}\|v_n^j\|_{S(I_n^m)} = \|v_n^{k_{n,m}}\|_{S(I_n^m)}=m.
$$
Applying the pigeonhole principle, there must exist a $j_1$ such that $k_{n,m}= j_1$ for infinitely many $n$. By reordering the indices, we can assume that $j_1 = 1$. Thus, 
$$
\limsup_{n,m\rightarrow \infty} \|v_n^1\|_{S(I_n^m)}=\infty,
$$
which implies (using \eqref{PSC2}) 
\begin{equation}\label{PSC3}
\limsup_{n,m \rightarrow \infty} \sup_{t\in I_n^m} \|\nabla v^1_n(t)\|^2_{L^2}\geq E_c.
\end{equation}

\ Notice that since all $v^j_n$ have finite scattering size on $I_n^m$ for each $m \geq 1$, we can obtain the same approximation result by employing a similar proof as before. More precisely,
$$
\lim_{J\rightarrow \infty} \limsup_{n\rightarrow \infty}\|u^J_n- u_n\|_{L^\infty_t \dot{H}^1_x(I_n^m )}=0.
$$
for each $m \geq 1$.

\ To conclude the proof of the proposition, we will utilize the following lemma, which will be proven subsequently.
\begin{lemma}{(\bf Kinetic energy decoupling for $u^J_n$)}\label{KED} For all $J\geq 1$ and $m\geq 1$,
$$
\limsup_{n\to\infty} \sup_{t\in I^m_n}\;\bigl| \|\nabla u_n^J(t)\|_{L^2}^2 - \sum_{j=1}^J \|\nabla v^j(t) \|_{L^2}^2 - \|\nabla w_n^J\|_{L^2}^2\bigr|=0.
$$
\end{lemma}

\ Applying the previous lemma, \eqref{Hyp1} and \eqref{PSC3} one has
$$
E_c\geq \limsup_{n\to\infty} \sup_{t\in I^m_n} \|\nabla u_n^J(t)\|_{L^2}^2=\lim_{J \rightarrow \infty}\limsup_{n\rightarrow \infty} \{ \|\nabla w^J_n\|^2_{L^2}+\sup_{t\in I_n^m}\sum_{j=1}^J \|\nabla v_n^j(t)\|^2_{L^2} 
\}.
$$
Therefore, the relation \eqref{PSC3} implies that $J_1 = 1$, $v^j_n \equiv 0$ for all $j \geq 2$, and $w_n$ converges to zero strongly in $\dot{H}^1$. Thus,
$$
u_n(0)=g_n^1[e^{it_n^1\Delta}\phi^1]+w^1_n.
$$
Finally, we show that the sequence $u_n(0)$ is precompact in $\dot{H}^1$. To this end, we must show that space-time translation parameters obey $(t^1_n,x^1_n) \equiv (0,0)$. That is, $u_n(0)-g^1_n(\phi^1)\rightarrow 0$ in $\dot{H}^1$.

\ We claim that $x^1_n \equiv 0$. Otherwise, if $|\frac{x^1_n}{\lambda_n^1}|$, then, by Proposition \ref{P:embed}, for all n sufficiently large, there exists a global solution $v^1_n$ to \eqref{INLS} satisfying
$$
v_n^1(0) = g_n^1[e^{it_n\Delta}\phi^1]\;\;\; and\;\;\; \|v_n^1\|<\infty.
$$
Thus, by the stability result (Proposition \ref{stability}), we show that $\|u_n\|_{S(\R)} < \infty$, which is a contradiction. We now suppose that $t^1_n\rightarrow \infty$. The Strichartz estimate and the monotone convergence theorem imply that
$$
\|e^{it\Delta}u_n(0)\|_{S\left([0,\infty)\right)}\lesssim \|e^{it\Delta}\phi^1\|_{S\left([t_n^1,\infty)\right)}+\|e^{it\Delta}w_n^1\|_{S\left([0,\infty)\right)}. 
$$
Therefore, $\|e^{it\Delta}u_n(0)\|_{S\left([0,\infty)\right)}$ is small for large $n$, using \eqref{vanishing}. Consequently, by the small data theory, we conclude that $\|u_n\|_{S\left([0,\infty)\right)}$ is bounded, which is a contradiction. Similarly, we obtain a contradiction when $t^1_n\rightarrow -\infty$.

We conclude the proof by proving the decoupling of kinetic energy. 

\begin{proof}{Lemma \ref{KED}} The proof is similar to that of Lemma 3.2 in \cite{KV}. So, we only give the main steps. Indeed, for any $t_n \in I_n^m$, a direct computation gives
$$
 \|\nabla u_n^J(t)\|_{L^2}^2 - \sum_{j=1}^J \|\nabla v^j(t) \|_{L^2}^2 - \|\nabla w_n^J\|_{L^2}^2=I_n+II_n,
$$    
where
$$
I_n=\sum_{j\neq k} \langle v_n^j(t_n^j),v_n^k(t_n^k)\rangle_{\Dot{H}^1}\qquad \textnormal{and}\qquad II_n=\sum_{j=1}^J 2\Re \langle e^{it\Delta}w_n^J, v_n^j(t_n^j)\rangle_{\Dot{H}^1}.
$$
It suffices for us to show
$$
\lim_{n\rightarrow \infty} I_n=0\;\;\textnormal{for}\;j\neq k\quad\textnormal{and}\quad \lim_{n\rightarrow \infty} II_n=0.
$$
To establish $I_n$, we employ the following result: for all $J \geq 1$ and $1 \leq j \leq J$, the sequence $e^{it^j_n\Delta}[(g^j_n)^{-1}w^J_n]$ converges weakly to zero in $\dot{H}^1$ as $n \rightarrow \infty$. The proof of $II_n$ follows a similar structure to that of $I_n$, integrating the orthogonality of parameters \eqref{orthogonality}.
\end{proof}

This completes the proof of Proposition \ref{PS}.
\end{proof}

\begin{proposition}\label{MS}
Assume Theorem \ref{T} fails, then there exist a critical value $0 < K_c < \|W\|^2_{\dot{H}}$ and a forward maximal life-span solution $u_c: [0, T_{\max}) \times \R^N
\rightarrow \C$ to \eqref{INLS} with 
$$
\sup_{t\in [0, T_{\max})}\|u_c(t)\|^2_{\dot{H}^1}=E_c<\|W\|^2_{\dot{H}^1}\quad \textnormal{and}\quad \|u_c\|_{S\left([0, T_{\max})\right)}=\infty.
$$
Moreover, there exists a frequency scale function $\lambda : [0, T_{\max}) \rightarrow (0, \infty)$ such that $$K=\{ \lambda(t)^{-\frac{(N-2)}{2} } u_c(t, \lambda(t)^{-1}x) : t \in [0, T_{\max})\}$$
is precompact in $\dot{H}^1$. An analogous result holds backward in time.

\end{proposition}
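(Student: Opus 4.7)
The plan is to follow the standard Kenig-Merle concentration-compactness construction, using Proposition \ref{PS} both to extract the critical solution and to show the precompactness of its rescaled orbit.

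By the assumed failure of Theorem \ref{T} and the definition of $E_c$, I would first select a sequence of maximal-lifespan solutions $u_n: I_n\times\R^N \to \C$ with
\[
\sup_{t\in I_n}\|u_n(t)\|^2_{\dot{H}^1}\searrow E_c \qtq{and} \|u_n\|_{S(I_n)}=\infty.
\]
For each $n$, I would choose $t_n\in I_n$ so that both $\|u_n\|_{S(I_n\cap[t_n,\infty))}$ and $\|u_n\|_{S(I_n\cap(-\infty,t_n])}$ are infinite, which is possible by an intermediate-value argument on the partial scattering norms, and then time-translate so that $t_n=0$.

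Next, Proposition \ref{PS} applied to $\{u_n\}$ at $t_n=0$ yields scales $\lambda_n>0$ such that, along a subsequence, $v_n(x):=\lambda_n^{-(N-2)/2}u_n(0,\lambda_n^{-1}x)$ converges strongly in $\dot{H}^1$ to some $u_{c,0}\neq 0$. I would define $u_c$ as the forward maximal-lifespan solution to \eqref{INLS} with data $u_{c,0}$, on the interval $I_c=[0,T_{\max})$. Three properties need verification: first, $\sup_{t\in I_c}\|u_c(t)\|^2_{\dot{H}^1}\leq E_c$ follows from lower semicontinuity of the $\dot{H}^1$-norm together with the stability result (Proposition \ref{stability}), applied to the rescaled solutions $\tilde u_n(t,x):=\lambda_n^{-(N-2)/2}u_n(\lambda_n^{-2}t,\lambda_n^{-1}x)$ on any compact subinterval of $I_c$; second, $\|u_c\|_{S(I_c)}=\infty$, because otherwise stability would transfer a finite scattering norm to $\tilde u_n$ for large $n$, contradicting the choice of $t_n=0$; and third, combining these with the definition of $E_c$ forces the equality $\sup_{t\in I_c}\|u_c(t)\|^2_{\dot{H}^1}=E_c$.

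Finally, for the precompactness of $K$ I would argue by contradiction. If no frequency scale function $\lambda:I_c\to(0,\infty)$ renders $K$ precompact, then there is a sequence $\tau_n\in I_c$ along which no rescaling of $u_c(\tau_n)$ has a convergent subsequence in $\dot{H}^1$. I would then apply Proposition \ref{PS} directly to the constant sequence $u_n\equiv u_c$ with $t_n=\tau_n$: the forward-divergence hypothesis $\|u_c\|_{S([\tau_n,T_{\max}))}=\infty$ is automatic, since $\|u_c\|_{S([0,\tau_n])}<\infty$ by the local theory while $\|u_c\|_{S(I_c)}=\infty$. The conclusion of Palais-Smale would then deliver scales making the orbit precompact along $\tau_n$, the desired contradiction. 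The principal technical obstacle is the verification of backward scattering-norm divergence in this last step, since $u_c$ is only forward-maximal: this is resolved by a minimality argument, for if $u_c$ scattered backward from some $\tau_n$, one could splice a backward-scattering tail onto $u_c$ to produce a non-scattering solution with strictly smaller supremum kinetic energy, contradicting the definition of $E_c$. Once two-sided divergence is established, Proposition \ref{PS} applies as stated, and the backward analogue follows by an identical argument after time reversal.
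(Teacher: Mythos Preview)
Your overall strategy matches the paper's proof closely: extract a minimizing sequence with two-sided scattering-norm blow-up at times $t_n=0$, apply Proposition~\ref{PS} to obtain a limit $u_{c,0}$, define $u_c$ as the solution with this data, transfer the blow-up and kinetic-energy properties via stability (Proposition~\ref{stability}), and then reapply Proposition~\ref{PS} along an arbitrary time sequence to obtain precompactness of the rescaled orbit. This is exactly the paper's argument.

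The one genuine gap is in your handling of backward scattering-norm divergence in the precompactness step. Your proposed minimality argument---that if $u_c$ scattered backward from some $\tau_n$ one could splice on a tail to obtain a non-scattering solution with \emph{strictly smaller} supremum kinetic energy---does not work as written: extending $u_c$ backward does not lower $\sup_t\|u_c(t)\|_{\dot H^1}^2$, since that supremum is already attained on the forward interval and equals $E_c$, so no contradiction with the definition of $E_c$ arises. The paper sidesteps this entirely by taking $u_c$ to be the \emph{two-sided} maximal-lifespan solution with data $u_{c,0}$. Because you already arranged that the rescaled approximants $\tilde u_n$ have infinite scattering norm in both time directions at $t=0$, stability transfers this to $u_c$, yielding $\|u_c\|_{S((-\infty,0])}=\|u_c\|_{S([0,T_{\max}))}=\infty$. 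Then for any $\tau_n\in[0,T_{\max})$ one has $\|u_c\|_{S((-\infty,\tau_n])}\geq\|u_c\|_{S((-\infty,0])}=\infty$, so both hypotheses of Proposition~\ref{PS} are satisfied for the constant sequence $u_n\equiv u_c$ at times $\tau_n$, and the precompactness contradiction goes through directly. Replacing your splicing argument with this observation closes the gap.
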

\begin{proof} The proof primarily relies on the Palais-Smale condition (more details, see \cite{KM}, \cite{KV}). We give the main steps. Assume Theorem \ref{T} fails. From the definition of $K_c$, there exists a sequence of solutions $u_n :I_n \times \R^N \rightarrow \C$ to \eqref{INLS} such that
\begin{equation*}
\lim_{n\rightarrow\infty}sup_{t\in I_n}\|u_n(t)\|^2_{\dot{H}^1}= Kc < \|W\|^2_{\dot{H}^1}\,,\qquad \lim_{n\rightarrow \infty} \|u_n\|_{S(t\geq t_n)} = \lim_{n\rightarrow \infty} \|u_n\|_{S(t\leq t_n)} = \infty.
\end{equation*}
By choosing $t_n \in I_n$ such that $\| u_n(t) \|_{S(t \geq t_n)} = \| u_n(t) \|_{S(t \leq t_n)}$, and exploiting time translation invariance, we set $t_n \equiv 0$. According to Proposition \ref{PS}, by passing to a subsequence, there exists $\{\lambda_n\} \subset \mathbb{R}^+$ such that $\lambda^{-\frac{N-2}{2}}u_n(0,\frac{x}{\lambda_n}) \rightarrow u_0 \text{ in } H^1.$

Let $u_c : I_{\max} \times \mathbb{R}^d \rightarrow \mathbb{C}$ with $u_c(0) = u_0$ be the maximal life-span solution to \eqref{INLS}. The stability result implies that for any compact interval $I \subset I_{\max}$,
\[
\begin{cases}
\lim_{n \rightarrow \infty}\| u_n - u\|_{L^\infty_t\dot{H}^1_x( I\times \R^N) } = 0 \\
\| u(t) \|_{S\left([0,\infty)\right)} = \| u(t) \|_{S\left((-\infty,0]\right)} = \infty \\
\sup_{t \in I} \| u(t) \|_{\dot{H}^1} = K_c < \| W\|^2_{\dot{H}^1}.
\end{cases}
\]
Moreover, using Proposition \ref{PS} again, we obtain that $K$ is precompact in $\dot{H}^1$.

\end{proof}

We now have all the tools to show the main theorem.

\subsection{Proof of Theorem \ref{T}} We proceed by contradiction. Suppose that Theorem \ref{T} fails. Then, by Proposition \ref{MS}, there exists $u_c : [0, T_{\max}) \times \mathbb{R}^N \rightarrow \mathbb{C}$, which is a minimal blow-up solution with a compactness property. We aim to show the non-existence of the minimal non-scattering solution $u_c$, as stated in Proposition \ref{MS}. To this end, we will consider two cases: $T_{\max} < \infty$ and $T_{\max} = \infty$. 

\ 

{\bf Claim $1$:} There are no solutions to \eqref{INLS} of the form given in Proposition \ref{MS} with $T_{\max} < \infty$.

\ 

Indeed, we use the following reduced Duhamel formula, which is a consequence of the compactness properties of $u_c$ (see \cite[Proposition~5.23]{KVClay} for details).
\begin{lemma}[Reduced Duhamel formula]\label{P:RD} For $t\in[0,T_{\max})$, the following holds as a weak limit in $\dot H^1$:
\[
u_c(t) = i\lim_{T\to T_{\max}} \int_t^T e^{i(t-s)\Delta}[|x|^{-b}|u_c|^{\alpha} u_c(s)]\,ds.
\]
\end{lemma}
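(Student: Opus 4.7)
The plan is to reduce the claimed identity to a statement about weak decay of the free Schr\"odinger evolution. Starting from the Duhamel formula on $[t,T]\subset[0,T_{\max})$,
\[
u_c(t) = e^{i(t-T)\Delta} u_c(T) + i\int_t^T e^{i(t-s)\Delta}\bigl[|x|^{-b}|u_c|^\alpha u_c(s)\bigr]\,ds,
\]
the lemma is equivalent to $e^{i(t-T)\Delta} u_c(T) \rightharpoonup 0$ weakly in $\dot{H}^1$ as $T\nearrow T_{\max}$. Energy trapping (Lemma \ref{energy-trapping}) bounds $\|u_c(T)\|_{\dot{H}^1}$ uniformly in $T$ and the free flow is an $\dot{H}^1$-isometry, so the family is bounded in $\dot{H}^1$; hence it suffices to test against a dense subset, for which I will take $\phi\in C_c^\infty(\R^N)$.

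The second step transfers the time-translation and the compactness scaling onto the test function. Write $g_\lambda f(x):=\lambda^{-(N-2)/2}f(x/\lambda)$, so that $\tilde u(T):=g_{\lambda(T)} u_c(T)$ belongs to the precompact set $K\subset \dot{H}^1$ from Proposition \ref{MS}. Using that $g_\lambda$ is a $\dot{H}^1$-isometry with adjoint $g_\lambda^{-1} = g_{\lambda^{-1}}$, together with the intertwining identity $g_\lambda e^{-ir\Delta} = e^{-ir\lambda^2\Delta}g_\lambda$, I rewrite
\[
\langle e^{i(t-T)\Delta} u_c(T),\phi\rangle_{\dot{H}^1} = \langle \tilde u(T),\, e^{-i\tau(T)\Delta}\,g_{\lambda(T)}\phi\rangle_{\dot{H}^1}, \qquad \tau(T):=(t-T)\lambda(T)^2.
\]
Extract a subsequence $T_n\nearrow T_{\max}$ along which $\tilde u(T_n)\to \tilde u_\infty$ strongly in $\dot{H}^1$ by precompactness. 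The difference $\tilde u(T_n)-\tilde u_\infty$, paired against the $\dot{H}^1$-bounded quantity $e^{-i\tau(T_n)\Delta}g_{\lambda(T_n)}\phi$ (of norm $\|\phi\|_{\dot{H}^1}$), is absorbed by Cauchy--Schwarz. Thus the remaining task is to establish
\[
\langle \tilde u_\infty,\, e^{-i\tau(T_n)\Delta}\,g_{\lambda(T_n)}\phi\rangle_{\dot{H}^1}\to 0.
\]

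Moving the flow back onto $\tilde u_\infty$ and integrating by parts against $\Delta\phi$ (valid because $\phi\in C_c^\infty$), a change of variables using the scale-invariance of $L^{\frac{2N}{N-2}}$ under $g_{\lambda(T_n)}$ yields
\[
\bigl|\langle \tilde u_\infty,\, e^{-i\tau(T_n)\Delta}\,g_{\lambda(T_n)}\phi\rangle_{\dot{H}^1}\bigr|\lesssim \|e^{i\tau(T_n)\Delta}\tilde u_\infty\|_{L^{\frac{2N}{N-2}}}\,\|\Delta\phi\|_{L^{\frac{2N}{N+2}}}.
\]
The standard $\dot{H}^1$-dispersive decay $\|e^{i\tau\Delta}\tilde u_\infty\|_{L^{2N/(N-2)}}\to 0$ as $|\tau|\to\infty$ then closes the estimate provided $|\tau(T_n)|\to\infty$; extending weak convergence from $C_c^\infty$ to all of $\dot{H}^1$ by density and the uniform $\dot{H}^1$-bound completes the proof.

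The main obstacle is verifying $|\tau(T_n)|=(T_n-t)\lambda(T_n)^2\to\infty$, which is really a statement about the behavior of the compactness scale $\lambda$ near $T_{\max}$. In the finite-time case $T_{\max}<\infty$, I would show $\lambda(T)\to\infty$ by contradiction: a bounded subsequence of $\lambda(T_n)$ would, together with precompactness of $\{\tilde u(T_n)\}$, yield an $\dot{H}^1$ limit of $u_c(T_n)$, whereupon the local theory (Proposition \ref{GWPCH1}) and the stability estimate (Proposition \ref{stability}) would extend $u_c$ past $T_{\max}$, contradicting maximality. Since $T_n-t\to T_{\max}-t>0$ is bounded below, this forces $|\tau(T_n)|\to\infty$. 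The case $T_{\max}=\infty$ already gives $T_n-t\to\infty$ and is handled analogously after the standard reduction ruling out $\lambda(T_n)\to 0$, which is implicit in the normalization of the critical element.
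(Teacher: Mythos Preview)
The paper does not supply its own proof of this lemma; it simply cites \cite[Proposition~5.23]{KVClay}. The argument there proceeds by contradiction: if $e^{i(t-T_n)\Delta}u_c(T_n)$ fails to converge weakly to zero along some sequence $T_n\to T_{\max}$, one extracts a nonzero weak limit and uses it (via the existence of wave operators and the stability theory) to produce a global scattering solution asymptotic to $u_c$, contradicting $\|u_c\|_{S}=\infty$. Your direct approach through the compactness parameter $\lambda(T)$ and dispersive decay is a different route and is close to working, but there is a genuine gap.

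The issue is your handling of the possibility $\lambda(T_n)\to 0$. In the infinite-time case you assert this is ``implicit in the normalization of the critical element,'' but Proposition~\ref{MS} contains no such normalization: it only gives the existence of \emph{some} $\lambda(t)$ with precompact orbit, with no lower bound. In the finite-time case your contradiction argument (``a bounded subsequence of $\lambda(T_n)$ would yield an $\dot H^1$ limit of $u_c(T_n)$'') only excludes $\lambda(T_n)$ staying in a compact subset of $(0,\infty)$; if $\lambda(T_n)\to 0$ then $u_c(T_n)=g_{\lambda(T_n)^{-1}}\tilde u(T_n)$ does \emph{not} converge strongly, so no extension past $T_{\max}$ follows, and your conclusion $\lambda(T_n)\to\infty$ is unjustified. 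In either situation, $|\tau(T_n)|$ may remain bounded and the dispersive step fails.

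The repair is short: when $|\tau(T_n)|$ stays bounded, pass to a further subsequence with $\tau(T_n)\to\tau_\infty$; then $e^{i\tau(T_n)\Delta}\tilde u_\infty\to e^{i\tau_\infty\Delta}\tilde u_\infty$ strongly in $\dot H^1$, and the remaining pairing is $\langle e^{i\tau_\infty\Delta}\tilde u_\infty,\,g_{\lambda(T_n)}\phi\rangle_{\dot H^1}$. For $\phi\in C_c^\infty$ one has $g_{\lambda_n}\phi\rightharpoonup 0$ in $\dot H^1$ whenever $\lambda_n\to 0$ or $\lambda_n\to\infty$, which disposes of those cases. The only scenario left is $\tau(T_n)$ bounded and $\lambda(T_n)\to\lambda_\infty\in(0,\infty)$, and \emph{that} is where your local-theory contradiction (for $T_{\max}<\infty$) or the divergence $T_n-t\to\infty$ (for $T_{\max}=\infty$) legitimately applies. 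With this extra branch your argument goes through.
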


We suppose that $T_{\max}<\infty$.  Then, using Lemma~\ref{P:RD}, Strichartz estimates, H\"older's inequality, Bernstein's inequality, and Hardy's inequality, we find that for any $t\in[0,T_{\max})$ and any $M>0$,
\begin{align*}
\|P_M u_c(t)\|_{L_x^2} & \lesssim \|P_M(|x|^{-b}|u_c|^{\alpha} u_c)\|_{L_t^1 L_x^2([t,T_{\max})\times\R^N)} \\
& \lesssim M^{N(\frac{N+2}{2N}-\frac{1}{2})}(T_{\max}-t)\| |x|^{-b}|u_c|^{\alpha}u_c\|_{L_t^\infty L_x^{\frac{2N}{N+2}}}\\
& \lesssim M(T_{\max}-t)\| u_c\|_{L_t^\infty L_x^{\frac{2N}{N-2}}}^{\alpha+1-b}\| \nabla u_c\|_{L_t^\infty L_x^2}^{b}.
\end{align*}
Thus, using Bernstein's inequality for the high frequencies, we deduce\footnote{Note that, $u=P_Mu+(1-P_M)u$.}
\[
\|u_c(t)\|_{L_x^2} \lesssim M(T_{\max}-t) + M^{-1}.
\]
for any $t\in[0,T_{\max})$ and $M>0$.  Using conservation of mass, we deduce $\|u_c\|_{L^2}\equiv 0$ and hence $u\equiv 0$. As $u$ is not identically zero, we conclude that the finite-time blowup scenario is impossible.

\ 

{\bf Claim $2$:} There are no solutions to \eqref{INLS} of the form given in Proposition \ref{MS} with $T_{\max} = \infty$.

\

Indeed, for a smooth weight $a$, we define
\begin{align}\label{M-Identity}
M_a(t) = 2\Im\int \bar u u_j a_j\,dx,
\end{align}
where we use subscripts to denote partial derivatives and sum repeated indices.  Using a computation using \eqref{INLS} and integration by parts, we then have
\begin{align}\label{virial}
\frac{dM_a}{dt} = &\int 4\Re a_{jk}\bar u_j  u_k - |u|^2 a_{jjkk} - \Big(2-\frac{4}{\alpha+2}\Big)|x|^{-b}|u|^{\alpha+2} a_{jj} \\\nonumber
&\ \ \ \ \ \ \ - \frac{4b}{\alpha+2}|x|^{-b-2}|u|^{\alpha+2} x_j a_j\,dx.
\end{align}
The standard virial identity corresponds to the choice $a(x)=|x|^2$. However, in this case, we cannot guarantee the finiteness of $M_a(t)$, as we are working with merely $\dot H^1$ data. Thus it is essential to localize the weight in space. The success of this relies on the compactness of the solution $u_c(t)$. In particular, we can establish the following.
\begin{lemma}[Tightness]\label{L:tight} Let $\eps>0$.  Then there exists $R=R(\eps)$ sufficiently large so that
\begin{equation}
\sup_{t\in[0,\infty)} \int_{|x|>R} \bigl\{|\nabla u_c(t,x)|^2 + |x|^{-b}|u_c(t,x)|^{\alpha+2}+ |x|^{-2}|u_c(t,x)|^2\bigr\}\,dx<\eps.
\end{equation}
\end{lemma}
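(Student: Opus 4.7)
The strategy is to deduce tightness of $u_c(t)$ from the precompactness of the renormalized orbit $K$ in $\dot H^1$, via a change of variables, with the main missing ingredient being a uniform lower bound on $\lambda(t)$.

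\emph{Step 1 (Uniform tightness of $K$ in all three norms).} Precompactness of $K \subset \dot H^1$ automatically yields uniform tightness at the level of $\|\nabla \cdot\|_{L^2}$: for any $\eps>0$ there exists $R_0=R_0(\eps)$ with $\sup_{v\in K}\int_{|y|>R_0}|\nabla v|^2\,dy<\eps$. Combining the Sobolev embedding $\dot H^1\hookrightarrow L^{2N/(N-2)}$ (which transports $\dot H^1$-precompactness to $L^{2N/(N-2)}$-precompactness) with the weighted H\"older estimate $\||y|^{-b}|v|^{\alpha+2}\|_{L^1(|y|>R)}\lesssim \|v\|_{L^{2N/(N-2)}(|y|>R)}^{\alpha+2}$ (valid since $\alpha+2=\frac{2(N-b)}{N-2}\cdot\frac{2}{2}$ balances the weight) together with the Hardy inequality $\||y|^{-1} v\|_{L^2}\lesssim\|\nabla v\|_{L^2}$ applied to $\chi_{|y|>R_0}v$, one obtains (after possibly enlarging $R_0$) analogous uniform tails for both $|y|^{-b}|v|^{\alpha+2}$ and $|y|^{-2}|v|^2$.

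\emph{Step 2 (Change of variables).} Write $u_c(t,x)=\lambda(t)^{\frac{N-2}{2}}v(t,\lambda(t)x)$ with $v(t)\in K$. Since each integrand $|\nabla u|^2$, $|x|^{-b}|u|^{\alpha+2}$, and $|x|^{-2}|u|^2$ is scale-invariant under the energy-critical rescaling $u\mapsto \lambda^{\frac{N-2}{2}}u(\lambda\cdot)$, changing variables $y=\lambda(t)x$ gives
\[
\int_{|x|>R}\!\!\bigl\{|\nabla u_c|^2+|x|^{-b}|u_c|^{\alpha+2}+|x|^{-2}|u_c|^2\bigr\}\,dx=\int_{|y|>\lambda(t)R}\!\!\bigl\{|\nabla v|^2+|y|^{-b}|v|^{\alpha+2}+|y|^{-2}|v|^2\bigr\}\,dy.
\]
Consequently, the right-hand side is less than $\eps$ whenever $\lambda(t)R\geq R_0$, which reduces the lemma to producing a uniform lower bound $\inf_{t\in[0,\infty)}\lambda(t)>0$.

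\emph{Step 3 (Main obstacle: $\lambda(t)\gtrsim 1$).} This is the crux of the argument. I would argue by contradiction: suppose $\lambda(t_n)\to 0$ along some sequence $t_n\in[0,\infty)$. By precompactness of $K$, a subsequence of $v(t_n)$ converges in $\dot H^1$ to some $v_\infty\neq 0$ (nontriviality follows from Lemma \ref{energy-trapping} and $K_c>0$). The heuristic is that INLS has a distinguished length scale built in by the inhomogeneity $|x|^{-b}$; since Proposition \ref{P:embed} allows one to extract a scattering nonlinear profile whenever the spatial center $|x_n/\lambda_n|\to\infty$, the scenario $\lambda(t_n)\to 0$ would correspond in the rescaled frame to spatial concentration at the origin but, equivalently, to $u_c(t_n,\cdot)$ spreading out to arbitrarily large spatial scales in the original frame, where the weight $|x|^{-b}$ decays. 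One then runs the Palais--Smale machinery of Proposition \ref{PS} at $u_c(t_n)$, extracting a profile decomposition in which the lone surviving (bad) profile must be that of $v_\infty$ and leads---via the embedding Proposition \ref{P:embed} applied to the rescaled data---to a globally scattering nonlinear profile, contradicting the non-scattering of $u_c$. Once $\lambda(t)\geq c>0$ is established, the conclusion follows by taking $R=R_0/c$ in Step 2.
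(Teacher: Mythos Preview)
The paper omits the proof as ``standard,'' so your outline should be measured against the standard Kenig--Merle/Killip--Visan argument. Steps~1 and~2 are exactly right: precompactness of $K$ in $\dot H^1$ gives uniform spatial tails for all three quantities (via Sobolev embedding for the potential term and Hardy for the $|x|^{-2}|u|^2$ term), and the change of variables reduces everything to the lower bound $\inf_{t\ge 0}\lambda(t)>0$. You have also correctly isolated this lower bound as the only nontrivial ingredient.

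The gap is in Step~3: your proposed mechanism for ruling out $\lambda(t_n)\to 0$ via Proposition~\ref{P:embed} does not work. In the profile decomposition of $u_c(t_n)$ one obtains a single bubble $g_n v_\infty$ with $x_n\equiv 0$ and $\lambda_n=\lambda(t_n)^{-1}\to\infty$, so $|x_n/\lambda_n|\equiv 0$, which is precisely the regime \emph{not} covered by Proposition~\ref{P:embed}. The heuristic that ``$u_c(t_n)$ lives where $|x|^{-b}$ is small'' is misleading here: the energy-critical INLS is exactly scale-invariant, so the rescaled nonlinear profile is just a dilate of a solution with the same (infinite) scattering norm---nothing forces it to scatter. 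Proposition~\ref{P:embed} buys you control of \emph{spatial translation}, not of \emph{scale}.

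The standard way to close this is different. If $t_n$ stays bounded, say $t_n\to t_*$, then $u_c(t_n)\to u_c(t_*)$ strongly in $\dot H^1$ by continuity of the flow, while $u_c(t_n)=g_{\lambda(t_n)^{-1}}v(t_n)\rightharpoonup 0$ weakly in $\dot H^1$ (a bubble at diverging scale goes to zero weakly); this forces $u_c(t_*)=0$, contradicting the nontriviality you noted. The case $t_n\to\infty$ is not handled by Proposition~\ref{P:embed} either; in the standard roadmap (see e.g.\ \cite{KM}, \cite{KV}, \cite{KVClay}) one instead \emph{normalizes} the minimal blow-up solution so that in the global-in-time scenario one has $\lambda(t)\ge 1$ for all $t\ge 0$: one rescales $u_c$ along a sequence achieving $\inf\lambda$, passes to a limit via Palais--Smale to produce a new minimal blow-up solution, and checks that this limit either has $\inf\lambda\ge 1$ or blows up in finite (backward) time, the latter being excluded by Claim~1. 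Once this normalization is in place, your Steps~1--2 finish the proof.
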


\begin{proof} The proof is standard, so we omit the details.
\end{proof}

Returning to the proof of Claim 2. Fix $\eps>0$ and choose $R=R(\eps)$ as in the previous lemma. Choosing our weight $a$ such that
\[
a(x) = \begin{cases} |x|^2 &\text{for } |x|\leq R \\ CR^2 & \text{for } |x|>2R,\end{cases}
\]
for some $C>1$.  In the intermediate region, we can impose
\[
|\partial^{\alpha} a| \lesssim R^{2-|\alpha|} \qtq{for} R<|x|\leq 2R
\]
for any multiindex $\alpha$. H\"older's inequality, and Lemma~\ref{energy-trapping} implies that (for some $\delta>0$)
\[
\sup_{t\in[0,\infty)} |M_a(t)| \lesssim R^2 \|u_c\|_{L_t^\infty \dot H_x^1}^2\lesssim R^2 E(u_c).
\]
Furthermore, by \eqref{virial}, one has
\begin{align}
\frac{dM_a}{dt} =  &8 \int_{\R^N} |\nabla u_c|^2 - |x|^{-b}|u_c|^{\alpha+2}\,dx \label{virial-main} \\
& + \mathcal{O}\biggl\{ \int_{|x|>R} |\nabla u_c|^2 + |x|^{-b}|u_c|^{\alpha} + |x|^{-2}|u_c|^2 \,dx\biggr\} \label{virial-error}
\end{align}
Lemma~\ref{energy-trapping} and Lemma~\ref{L:coercive} implies that
\[
\eqref{virial-main} \geq \delta\int |\nabla u_c|^2\,dx \gtrsim \delta E(u_c),
\]
where $t\in[0,\infty)$ . On the other hand, Lemma~\ref{L:tight} implies $|\eqref{virial-error}|<\eps$. Hence, by applying the fundamental theorem of calculus and integrating throughout the form $[0,T]$, it follows that
\[
E(u_c) \lesssim \tfrac{R^2}{\delta T}E(u_c) + \eps.
\]
Choosing $T$ sufficiently large, we obtain $E(u_c)\equiv 0$ and so $u_c\equiv 0$. This is a contradiction.






\

\section{Blow-up}\label{blowup}

This section shows the blow-up result via localized virial identity (Theorem \ref{Blow-up}. To this end, we first show the local well-posedness in $H^1(\R^N)$. The proof closely follows Proposition \ref{GWPCH1}, with the key difference being the substitution of the metric $d(u,v)=\|u-v\|_{S^1(I)}$ with $d(u,v)=\| u-v\|_{W(I)}$. It is worth mentioning that the metric is determined by the function itself, not by its derivative. As a result, we avoid the dimension restriction typically imposed by $\alpha \geq 1$, allowing the result to hold for all dimensions $N \geq 3$. In the sequel, we show a technical lemma and finally, we prove Theorem \ref{Blow-up}.

\begin{lemma}\label{LLWPH1}
Assume $N\geq3$, $\alpha=\frac{4-2b}{N-2}$ and $0<b\leq \frac{4}{N}$. For any $u_0 \in H^1(\mathbb{R}^N)$, there  exists $T = T (u_0)$ and a unique solution $u$ with initial data $u_0$ satisfying
$$
u \in L^q_{loc}\left((-T, T \right), H^{1,r}),\;\;\; \forall\; (q, r) \;\;S\textnormal{-admissible}.
$$
  
\end{lemma}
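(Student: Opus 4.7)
The plan is to adapt the contraction mapping scheme from Proposition~\ref{GWPCH1}, but on a closed subset of $H^1$-valued Strichartz functions equipped only with the weaker metric $d(u,v)=\|u-v\|_{W(I)}$. Concretely, for $\rho>0$ to be fixed and $I=[0,T]$ with $T$ small, I would work on
$$
B_\rho = \bigl\{ u \in C(I;H^1(\R^N)) : \|u\|_{S(I)} + \|u\|_{W(I)} + \|\nabla u\|_{W(I)} + \|u\|_{L^\infty_IL^2_x} \leq \rho \bigr\},
$$
and show that the Duhamel operator
$$
G(u)(t) = e^{it\Delta}u_0 + i\int_0^t e^{i(t-s)\Delta}|x|^{-b}|u|^\alpha u \, ds
$$
maps $B_\rho$ into itself and is a contraction with respect to $d$. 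The $L^2$-component is controlled using that $(q_0,r_0)$ is $S$-admissible, together with the inequality
$$
\bigl\||x|^{-b}|u|^{\alpha+1}\bigr\|_{L^2_IL^{\tfrac{2N}{N+2}}_x} \lesssim \|\nabla u\|_{W(I)}^{b}\|u\|_{S(I)}^{\alpha-b}\|u\|_{W(I)},
$$
which is Lemma~\ref{LC} with $g=u$; the $\dot H^1$-component is handled exactly as in the proof of Proposition~\ref{GWPCH1}. Both require only $\alpha-b\geq 0$, i.e.\ $b\leq \tfrac{4}{N}$, and not $\alpha\geq 1$.

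The key new point is the contraction. Rather than using the second-order difference bound \eqref{SECONDEI}, which forces the restriction $\alpha\geq 1$ through the $E$ term in Proposition~\ref{GWPCH1}, I would invoke only the first-order estimate \eqref{FEI}:
$$
|F(x,u) - F(x,v)| \lesssim |x|^{-b}(|u|^\alpha + |v|^\alpha)|u-v|.
$$
Applying the Strichartz inequality \eqref{SE2} with the admissible pair $(q_0,r_0)$ and then Lemma~\ref{LC} (with $f\in\{u,v\}$ and $g=u-v$) gives
$$
d(G(u),G(v)) \lesssim \bigl( \|\nabla u\|_{W(I)}^{b}\|u\|_{S(I)}^{\alpha-b} + \|\nabla v\|_{W(I)}^{b}\|v\|_{S(I)}^{\alpha-b}\bigr) \|u-v\|_{W(I)} \lesssim \rho^{\alpha}\, d(u,v),
$$
on $B_\rho$, and a similar self-mapping bound. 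Choosing $\rho$ small (with $T$ small enough that the homogeneous Strichartz contributions fit, using time-divisibility of the $W$- and $S$-norms) closes both estimates without any restriction beyond $\alpha-b\geq 0$.

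The main technical obstacle is that $B_\rho$ must be a complete metric space for Banach's theorem. This is a standard reflexivity argument: any $d$-Cauchy sequence $\{u_n\}\subset B_\rho$ converges strongly in $W(I)$ to some $u$; by weak-$\ast$ compactness in $L^\infty_IL^2_x\cap L^\infty_I\dot H^1_x$ and in the other reflexive Strichartz spaces defining $B_\rho$, we extract a weak-$\ast$ limit which must coincide with $u$, lower-semicontinuity of norms puts $u$ back in $B_\rho$, and continuity $u\in C(I;H^1)$ follows from the Duhamel representation once $u$ is shown to solve the equation. Granted this, Banach's fixed point theorem produces a unique $u\in B_\rho$ with $G(u)=u$.

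Finally, the claimed regularity $u\in L^q_{\mathrm{loc}}((-T,T);H^{1,r})$ for every $S$-admissible $(q,r)$ is a direct consequence: once $u$ is known to belong to $B_\rho$, the nonlinearity $|x|^{-b}|u|^\alpha u$ and its gradient lie in $L^2_IL^{\tfrac{2N}{N+2}}_x$ by Lemma~\ref{LC} and the Hardy inequality, so a final application of the Strichartz estimate \eqref{SE1}--\eqref{SE2} upgrades $u$ to any admissible Strichartz norm at both the $L^2$ and $\dot H^1$ level, completing the proof.
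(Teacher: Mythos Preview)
Your approach is correct and matches the paper's: both use the weak metric $d(u,v)=\|u-v\|_{W(I)}$ together with the first-order difference bound \eqref{FEI} (rather than \eqref{SECONDEI}) for the contraction step, which is exactly what removes the constraint $\alpha\geq 1$ and allows all $N\geq 3$. The only slip is bundling $\|u\|_{L^\infty_I L^2_x}$ into the ball with the \emph{same} small parameter $\rho$: this norm is not time-divisible (it equals $\|u_0\|_{L^2}$ on any interval), so for large $L^2$ data you cannot simultaneously have $\rho$ small enough for the contraction $C\rho^\alpha<1$ and $\rho\geq\|u_0\|_{L^2}$ for the self-map. The paper avoids this by using two parameters---a small $\rho$ for $\|u\|_{S(I)}+\|\nabla u\|_{W(I)}$ and a separate $M=2c\|u_0\|_{H^1}$ for $\|u\|_{W(I)}$---and simply leaving $L^\infty_I L^2_x$ out of the ball (it is recovered a posteriori from Strichartz); either fix repairs your argument immediately.
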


\begin{proof}
Define
$$
S_{T,\rho}=\{u\in C([0,T];H^1(\mathbb{R}^N))\;:\;\,\|u\|_{S^1([0,T])}\leq \rho\,,\,\,\|u\|_{W([0,T])}\leq M \},
$$
where $\|\cdot\|_{S^1(I)}$ as in \eqref{normS1}. We show that the operator $G$ defined in \eqref{OPERATOR} is a contraction on $S_{T,\rho}$ equipped with the metric
$$
d(u,v):=\| u-v\|_{W([0,T])}.
$$

\ The Sobolev embedding and the Strichartz estimates \eqref{SE1}-\eqref{SE2} yield
\begin{eqnarray*}
\|G(u)\|_{S([0,T])}
\leq  \|e^{it\Delta}u_0\|_{S([0,T])}+c\left\|\nabla F(x,u)\right\|_{L_I^{2}L_x^{\frac{2N}{N+2}}},
\end{eqnarray*}
\begin{equation*}
\|\nabla G(u)\|_{W([0,T])}\leq \|\nabla e^{it\Delta}u_0\|_{W([0,T])}+ c\|\nabla F(x,u)\|_{L^2_IL_x^{\frac{2N}{N+2}}},
\end{equation*}
and
\begin{equation*}
\| G(u)\|_{W([0,T])}\leq \|\nabla e^{it\Delta}u_0\|_{W([0,T])}+ c\| F(x,u)\|_{L^2_IL_x^{\frac{2N}{N+2}}}.
\end{equation*}
Thus, applying Lemma \ref{LC} together with the Hardy inequality one has
\begin{equation*}
\begin{split}
\|G(u)\|_{S^1([0,T])}&\leq \|e^{it\Delta}u_0\|_{S^1([0,T])} +c \|u\|^{\alpha-b}_{S([0,T])}\|\nabla u\|^{b+1}_{W([0,T])}\\
\|G(u)\|_{W([0,T])}&\leq  \|e^{it\Delta}u_0\|_{W([0,T])}+c \|u\|^{\alpha-b}_{S([0,T])}\|\nabla u\|^{b}_{W([0,T])}\|u\|_{W([0,T])}
\end{split}
\end{equation*}
If $u\in S_{T,\rho}$ we obtain
\begin{align*}
\|G(u)\|_{W([0,T])}\leq& 
c\|u_0\|_{H^1}+c\rho^{\alpha}M.
\end{align*}
Let $M=2c\|u_0\|_{H^1}$ and choose $\rho $ such that 
$c\rho^{\alpha}< \frac{1}2.$ Moreover, choosing $T$ small enough such that 
$\| e^{it\Delta}u_0\|_{S^1([0,T])}< \frac{\rho}{2}$, we get 
$$\| G(u)\|_{S^1([0,T]))}\leq \rho \qquad\textnormal{and}\qquad \| G(u)\|_{W([0,T]))}\leq M.$$
That is, $G(u)\in S_{T,\rho}$.

Repeating the above computations we get (if $u, v \in S_{T,\rho}$)
\begin{eqnarray*}
d(G(u),G(v))&\leq& c \|F(x,u)- F(x,v)\|_{L^2_IL^{\frac{2N}{N+2}}_x}\\
&\leq& c\;\left( \|\nabla u\|^b_{W(I)} \|u\|^{\alpha-b}_{S(I)}+\|\nabla v\|^b_{W(I)} \|v\|^{\alpha-b}_{S(I)}\right) \|u-v\|_{W(I)}\\
&\leq& 2c\rho^\alpha d(u,v).
\end{eqnarray*}
So, by the contraction mapping principle, $G$ has a unique fixed point $u\in S_{T,\rho}$.
\end{proof}

We now show the following lemma.

\begin{lemma}\label{Blow}
  Let $u(t):I_{\max}\times \R^N\to\mathbb{C}$ be a solution to \eqref{INLS} obeying
  $$E[u_0]<(1-\delta_0)E[W]\qquad  \textnormal{and} \qquad \|\nabla u_0\|_{L^2}\geq\|\nabla W\|_{L^2},$$
  there exists $\delta$ such that
  \begin{align*}
    \ \|u(t)\|_{\dot H^1}\geq (1+\delta)\|W\|_{\dot H^1}
  \end{align*}
for all $t\in I_{\max}.$
\end{lemma}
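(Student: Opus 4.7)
The plan is to mirror the argument of Lemma~\ref{energy-trapping} (energy trapping), now working on the super-threshold side of the ground state. With $c = \|\nabla W\|_{L^2}^2$ and $C_1 = c^{-\alpha/2}$ from \eqref{ywmtbe}, the sharp Sobolev-type inequality \eqref{embedding-est} and conservation of energy give, for every $t \in I_{\max}$,
\[
F(\|u(t)\|_{\dot H^1}^2) \leq E[u(t)] = E[u_0] < (1-\delta_0)E[W] = (1-\delta_0)F(c),
\]
where
\[
F(y) := \tfrac{y}{2} - \tfrac{c^{-\alpha/2}}{\alpha+2}\, y^{(\alpha+2)/2}, \qquad y \geq 0.
\]
A direct computation shows that $F$ is strictly increasing on $[0,c]$, strictly decreasing on $[c,\infty)$, and attains its unique maximum $F(c) = E[W]$ at $y = c$.

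Since $(1-\delta_0)F(c) < F(c)$, the sublevel set $\{y \geq 0 : F(y) \leq (1-\delta_0)F(c)\}$ decomposes into two disjoint closed intervals $[0, y_-]$ and $[y_+, \infty)$ with $y_- < c < y_+$. The hypothesis $\|\nabla u_0\|_{L^2}^2 \geq c$ combined with $F(\|u_0\|_{\dot H^1}^2) \leq (1-\delta_0)F(c)$ forces $\|u_0\|_{\dot H^1}^2 \in [y_+,\infty)$. The map $t\mapsto \|u(t)\|_{\dot H^1}^2$ is continuous on $I_{\max}$ by continuity of the flow in $\dot H^1$, and the strict inequality $F(\|u(t)\|_{\dot H^1}^2) \leq (1-\delta_0)F(c) < F(c)$ forbids $\|u(t)\|_{\dot H^1}^2$ from ever equaling $c$. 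A standard connectedness argument then keeps $\|u(t)\|_{\dot H^1}^2 \in [y_+,\infty)$ for every $t\in I_{\max}$.

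To conclude with a quantitative constant, I would extract $\delta = \delta(\delta_0) > 0$ such that $y_+ \geq (1+\delta)^2 c$, whence
\[
\|u(t)\|_{\dot H^1} \geq \sqrt{y_+} \geq (1+\delta)\|\nabla W\|_{L^2}.
\]
The main (minor) obstacle is the quantitative separation $y_+ - c \gtrsim_{\delta_0} 1$; this is not really an obstacle since $F''(c) = -\tfrac{\alpha}{4c} < 0$ implies $F(c) - F(y) \sim \tfrac{\alpha}{8c}(y-c)^2$ near $c$, so the equation $F(y_+) = (1-\delta_0)F(c)$ yields $y_+ - c \sim \sqrt{\delta_0 \cdot F(c) \cdot c/\alpha}$, giving the desired $\delta$ explicitly. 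No other subtlety arises: conservation of energy does all the work once the variational picture for $F$ is in hand.
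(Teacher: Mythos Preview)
Your proposal is correct and follows essentially the same route as the paper: both mirror the energy-trapping argument of Lemma~\ref{energy-trapping}, using the function $F(y)=\tfrac{y}{2}-\tfrac{c^{-\alpha/2}}{\alpha+2}y^{(\alpha+2)/2}$, the lower bound $F(\|u(t)\|_{\dot H^1}^2)\leq E[u(t)]$ from \eqref{embedding-est}, conservation of energy, and continuity of the flow to trap $\|u(t)\|_{\dot H^1}^2$ above $c+\delta_1$. Your write-up is in fact more detailed and explicit (in particular the quantitative separation via $F''(c)<0$) than the paper's brief sketch.
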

\begin{proof}
By Lemma \ref{energy-trapping}, we have
\[
F(y)=\frac{y}2-\tfrac{c^{-\frac{\alpha}2}}{\alpha+2}y^{\frac{\alpha+2}{2}}, \qtq{while} y\geq c
\]
with $y=\|u(t)\|_{\dot H^1}^2$. Using the fact $F(0)<(1-\delta_0)\tfrac{\alpha c}{2(\alpha+2)}$, continuity of the flow in $\dot H^1$, and
conservation of energy. Thus, one can get  $y>c+\delta_1$ for some $\delta_1$.
Therefore we get the desired result.
\end{proof}

Finally, we show the blow-up result for \eqref{INLS} with non-radial initial data. 
\begin{proof}[\bf Proof of Theorem \ref{Blow-up}]
Let $\varphi(x)\in C_0^{\infty}(\R^N)$ be a radial function, denote by
\begin{align*}
  \varphi(x)=
  \begin{cases}
    &\frac{|x|^2}{2},  \text{if}\ |x|\leq 1;\\
    &0, \quad \text{if}\ |x|\geq 10,
  \end{cases}
  \ \text{and}\ \varphi''(x)\leq1.
\end{align*}
Then, let $R\geq1$ and $a(x)=R^2\varphi(x/R)$ satisfies
$$a''(x)\leq1,\ a'(x)\leq R\  \text{and}\ \Delta a=N$$
and, for $|x|\leq R$, we have
$$\Delta a(x)=N\  \text{and}\ \nabla a(x)=x.$$

Set $V_R(t)=\int_{\R^N}a(x)|u|^{2}dx$, then
$$\frac{d}{dt}V_R(t)=M_a(t).$$
From \eqref{M-Identity}, one can obtain that
\begin{align*}
  \frac{d^2}{dt^2}V_R(t)=&4\int_{\R^N} \Re a_{jk}\bar u_j  u_k - |u|^2 a_{jjkk}\\
  &- \Big(2-\frac{4}{\alpha+2}\Big)|x|^{-b}|u|^{\alpha+2} a_{jj}- \frac{4b}{\alpha+2}|x|^{-b-2}|u|^{\alpha+2} x_j a_j\,dx\\
 =&4\int_{\R^N}|\nabla u|^2\frac{a'(x)}{|x|}dx+4\int_{\R^N}|x\cdot\nabla u|^2\Big(\frac{a''(x)}{|x|^2}-\frac{a'(x)}{|x|^3}\Big)dx-\int_{\R^N}|u|^2 a_{jjkk}dx\\
 &-\int_{\R^N} \Big(2-\frac{4}{\alpha+2}\Big)|x|^{-b}|u|^{\alpha+2} a_{jj}- \frac{4b}{\alpha+2}|x|^{-b-2}|u|^{\alpha+2} x_j a_j\,dx
\end{align*}
Since $a''(x)\leq1$ and $a'(x)=|x|~(|x|\leq R)$, we then have
\begin{align*}
\frac{d^2}{dt^2}V_R(t)\leq&4\int_{\R^N}|\nabla u|^2dx-\int_{\R^N}|u|^2 a_{jjkk}dx\\
 &-\int_{\R^N} \Big(2-\frac{4}{\alpha+2}\Big)|x|^{-b}|u|^{\alpha+2} a_{jj}- \frac{4b}{\alpha+2}|x|^{-b-2}|u|^{\alpha+2} x_j a_j\,dx\\
 \leq&4\int_{\R^N}|\nabla u|^2dx-4\int_{\R^N}|x|^{-b}|u|^{\alpha+2}dx\\
 &+\frac{C}{R^2}\int_{R\leq |x|\leq 10R}|u|^2dx+CR^{-b}\int_{|x|\geq R}|u|^{\alpha+2}dx.
\end{align*}

To control the last term we use the Gagliardo-Nirenberg inequality:
$$\|f\|_{L_x^{\alpha+2}}^{\alpha+2}\leq C\|f\|_{L_x^2}^{\frac{N+2-(N-2)(\alpha+1)}2}\|\nabla f\|_{L_x^2}^{\frac{N\alpha}2}.$$
By the mass conservation, we can get
\begin{align}\label{V_R}
 \frac{d^2V_R}{dt^2}(t)\leq&4\Big(\int_{\R^N}(|\nabla u|^2-|x|^{-b}|u|^{\alpha+2})dx\Big)+CR^{-b}\|\nabla u\|_{L_x^2}^{\frac{N\alpha}2}+CR^{-2},
\end{align}
where $C$ depends on $\|u_0\|_{L^2}$ and we use the fact $0<b<2$.

\ 

\textbf{Case I: $b\geq\frac{4}N$.} From inequality \eqref{V_R}, then one can obtain that
\begin{align*}
  \frac{d^2}{dt^2}V_R(t)\leq&4\int_{\R^N}|\nabla u|^2dx-4\int_{\R^N}|x|^{-b}|u|^{\alpha+2}dx+R^{-b}\|\nabla u\|_{L_x^2}^2+CR^{-b}\\
  \leq&4(\alpha+2)E(u(t))-(2\alpha-R^{-b})\|\nabla u\|_{L_x^2}^2+CR^{-b}
\end{align*}
where $C$ depends on $\|u_0\|_{L_x^2}$. By Lemma \ref{Blow} we deduce that
\begin{align*}
 \frac{d^2}{dt^2}V_R(t)\leq& 2\alpha(1-\delta_0)\|\nabla W\|_{L_x^2}^2-(1+\delta)(2\alpha-R^{-b})\|\nabla W\|_{L_x^2}^2+CR^{-\min\{b,2\}}\\
 \leq&-2\alpha(\delta_0+\delta)\|\nabla W\|_{L_x^2}^2+(1+\delta)R^{-b}\|\nabla W\|_{L_x^2}^2+CR^{-\min\{b,2\}}.
\end{align*}
Choosing $R$ large such that
$$(1+\delta)R^{-b}\|\nabla W\|_{L_x^2}^2+CR^{-\min\{b,2\}}\leq\alpha(\delta_0+\delta)\|\nabla W\|_{L_x^2}^2, $$
then we have
$$\frac{d^2}{dt^2}V_R(t)\leq-\alpha(\delta_0+\delta)\|\nabla W\|_{L_x^2}^2.$$
Thus, by integrating, it follows
\begin{align*}
  \int_{\R^N}a(x)|u(t)|^2dx\leq -Ct^2+2tM_a(t)+\int_{\R^N}a(x)|u(t)|^2dx
\end{align*}
for $C=\alpha(\delta_0+\delta)\|\nabla W\|_{L_x^2}^2$. The inequality give us that the maximal lifespan is bounded. Therefore we get the desired result.

\

\textbf{Case II: $0<b<\frac{4}N$}. Suppose $I_{\max}=\R$. We will prove $u(t)$ blows-up in infinite time. Fixing $T<\infty$, let $R(t)$ denote by
$$R(T)=\sup_{[0,T]}(\delta^{-1}\|\nabla u(t)\|_{L_x^2})^{\frac{N\alpha-4}{2}}.$$
Thus, we have
$$R(T)^{-b}\|\nabla u(t)\|_{L_x^2}^{\frac{N\alpha}2}\leq \delta \|\nabla u(t)\|_{L_x^2}^2$$
and (using the fact $\|u(t)\|_{L_x^2}^2\geq C$)
$$R(T)^{-b}\leq C\delta \|\nabla u(t)\|_{L_x^2}^2.$$
Together with \eqref{V_R}, Lemma \ref{Blow} and choosing $\delta$ small enough, we can get
$$V''_R(t)\lesssim -\delta_0\|\nabla u(t)\|_{L_x^2}^2.$$

A direct computation shows that
$$V_R(t)=V_R(0)+V'_R(0)T+\int_{0}^T\int_{0}^s V^{''}_R(t)dtds.$$
Using the definition of $V_R(t)$, thus we can get
$$V_R(0)\lesssim_{\|u_0\|_{L_x^2}} R(T)^2$$
and
$$|V^{'}_R(0)|\lesssim_{\|\nabla u_0\|_{L_x^2}}R(T).$$
Collecting the above estimates, we have
\begin{align*}
  \int_{0}^T\int_{0}^s \|\nabla u(t)\|_{L_x^2}^2dtds\lesssim R(T)^2+R(T)\cdot T.
\end{align*}
Using the fact $\|\nabla u(t)\|_{L_x^2}\geq C$ and Young's inequality, thus
$$T^2\lesssim R(T)^2+\frac{R(T)^2}2+\frac{T^2}2.$$
Therefore, we can obtain that $R(T)\geq CT$. Hence
\begin{align*}
  \sup_{t\in [0,T]}\|\nabla u(t)\|_{L_x^2}\geq C\delta T^{\frac{2}{N\alpha-4}}.
\end{align*}
We complete the proof.
\end{proof}

\section*{Data availability statement}

No new data were created or analyzed during this study. Data sharing is not applicable to this article. 
 

\end{document}